\makeatletter \@addtoreset{equation}{section}
\renewcommand{\thesection}{\arabic{section}}
\renewcommand{\theequation}{\thesection.\arabic{equation}}
\newcommand{\E}{\mathds{E}}
\newcommand{\RR}{\mathbb{R}}
\newcommand{\NN}{\mathbb{N}}
\newtheorem{theorem}{Theorem}[section]
\newtheorem{lemma}{Lemma}[section]
\newtheorem{corollary}{Corollary}[section]
\newtheorem{definition}{Definition}[section]
\newtheorem{assumption}{Assumption}[section]
\newtheorem{example}{Example}[section]
\newtheorem{proposition}{Proposition}[section]
\newtheorem{remark}{Remark}[section]
\def\td{\textrm{d}}
\def\ex{\mathds{E}}
\def\prb{\mathds{P}}
\def\one{\mathds{1}}
\def\bx{\bar{X}}
\def\by{\bar{Y}}
\def\tr{\textrm{tr}}
\def\bb{\bar{b}}
\def\bsig{\bar{\sigma}}
\newcommand\num{\stepcounter{equation}\tag{\theequation}}
\def \D {\Delta}
\def \F {\mathcal{F}}
\def \ck {\check{\kappa}}
\def \lev {\left\|} \def \rev{\right\|}
\def \levv {\left|} \def \revv{\right|}
\def \lj {\left\langle} \def \rj {\right\rangle}
\def \lf {\left\lfloor} \def \rf {\right\rfloor}
\def\a{\alpha} \def\g{\gamma} 
 \def\k{\kappa}   
   \def\s{\sigma}
\def\be{\beta}
\def\le{\leqslant} \def\ge{\geqslant}
\def\1{\mathbf{1}}
\def\g{\gamma}
\def\tro{\tilde{\rho}}
\def\tx{\tilde{X}}
\title{$V$-Integrability, Asymptotic Stability And Comparison Theorem\\
		of Explicit Numerical Schemes for SDEs}
\author{{\L}ukasz Szpruch and X\={\i}l\'{\i}ng Zh\={a}ng
        \thanks{School of Mathematics, The University of Edinburgh, EH9 3FD, Edinburgh, UK (\texttt{l.szpruch@ed.ac.uk, xiling.zhang@ed.ac.uk}).}
        }
\date{}
\begin{document}
\maketitle

\thispagestyle{empty}

\begin{abstract}
\textsf{\em 
 Khasminski's  \cite{chas1980stochastic} showed that many of the asymptotic stability and the integrability properties of the solutions to the Stochastic Differential Equations (SDEs) can be obtained using Lyapunov functions techniques. These properties are rarely inherited by standard numerical integrators. In this article we introduce a family of explicit numerical approximations for the SDEs and derive conditions that allow to use Khasminski's  techniques in the context of numerical approximations,  particularly in the case where SDEs have non globally Lipschitz coefficients. 
 Consequently, we show that it is possible to construct a numerical scheme, that is bounded in expectation with respect to a Lyapunov function, and/or inherit the asymptotic stability property from the SDEs. Finally we show that using suitable schemes it is possible to recover comparison theorem for scalar SDEs.}

\medskip
\noindent \textsf{{\bf Key words:} \em Stochastic Differential Equations, Lyapunov functions, Numerical Scheme, Stability, Comparison Theorem.}

\medskip
\noindent{\small\bf 2000 Mathematics Subject Classification: } 65C30,\;65C05

\end{abstract}

\section{Introduction}

The main goal of this article is to extend the applicability of Lyapunov function techniques to the numerical approximation of SDEs. This is achieved by modifying standard numerical schemes in such a way that, despite the lack of discrete-time It\^{o}'s formula, it is possible to mimic proofs developed for continuous time stochastic process. This then allows to analyse asymptotic and qualitative properties of the schemes using classical Khasminski's  techniques \cite{chas1980stochastic}. In particular, we investigate the integrability, asymptotic stability and convergence properties of numerical approximations for SDEs, paying particular attention to SDEs with non-globally Lipschitz drift and diffusion. This is a relatively new area of studies as the majority of research on numerical analysis for SDEs relies on, restrictively, global Lipschitz assumptions \cite{kloeden1992numerical,milstein2004stochastic}. If the global Lipschitz condition does not hold for either of the coefficients, Hutzenthaler, Jentzen and Kloeden \cite{hutzenthaler2011strong} showed that explicit Euler scheme has a fundamental flaw, and there is no hope for Lyapunov functions techniques to work in this setting. One of the consequences of this fact is that the classical Euler schemes fails to converge in strong sense. 

Let $(\Omega,\F,\prb)$ be a complete probability space with a right-continuous filtration $(\F_t)_{t\in I}$, where $I$ is a subinterval of $[0,\infty)$. Let $W_t$ be a $m$-dimensional $(\F_t)_{t\in I}$-adapted Wiener process. Consider SDE
\begin{equation} \label{eq:SDE}
\td X_t = b(t,X_t)\td t+\s(t,X_t)\td W_t,~t\in I,
\end{equation}
where $b:I\times\RR^d\rightarrow\RR^d$ and $\s=:I\times\RR^d\rightarrow\RR^{d\times m} $ are locally Lipschitz continuous. Consider Lyapunov function $V:\RR^d\to\RR^+$, which is at least twice differentiable. Both integrability and asymptotic stability properties of \eqref{eq:SDE} can be deduced by examining the diffusion operator 
\begin{equation*}
\mathcal{L}V(x)=\langle\nabla V(x),b(t,x)\rangle+\frac{1}{2}\tr\left[\s(t,x)V^{(2)}(x)\s(t,x)^\top\right],~\forall t\in I,~x\in\RR^d,
\end{equation*}
where $V^{(2)}(\cdot)$ is the Hessian matrix of $V(\cdot)$. When $I=[0,T]$ for $T>0$ fixed, standard results in stochastic analysis \cite{chas1980stochastic}, states that if there exists $\rho>0$ s.t 
\begin{equation}\label{LV1}
\mathcal{L}V(x) \le\rho V(x),~\forall t\in[0,T],~x\in\RR^d,
\end{equation}
then
\begin{equation}\label{eq:V_stab}
\E V(X_t) \le e^{\rho T} \E V(X_0),~\forall t\in[0,T].
\end{equation}

For the purpose of asymptotic stability of the equilibrium, one considers $I=[0,\infty)$ and the coefficients satisfying $b(t,0)\equiv0,~\s(t,0)\equiv0,~\forall t\ge0$ \footnote{ Given the well-posedness of the SDE \eqref{eq:SDE} one sees that the system has trivial solution (equilibrium) $X_t\equiv0,~\forall t\ge0$ a.s. when $X_0\equiv0$ a.s. } (see \cite{mao1991stability,mao2007stochastic}).
In this context one needs to consider a  Lyapunov function $V:\RR^d\to\RR^+\cup\{0\}$, which is at least twice differentiable, taking value $0$ at the origin, and strictly positive elsewhere (e.g. $V(\cdot)=|\cdot|^p$). Instead of \eqref{LV1}, a sufficient condition for  $X_t\to0$ a.s. as $t\to\infty$, regardless of the value of $X_0$ is 
\begin{equation}\label{LV2}
\mathcal{L}V(\cdot)\le-z(\cdot),
\end{equation}
for some $0\le z\in\mathcal{C}(\RR^d)$ such that  $z(\RR^d\setminus\{0\})>0$ and $z(0)=0$. Moreover if $z(\cdot)\ge\rho V(\cdot)$ for some constant $\rho>0$, then instead of \eqref{eq:V_stab} one has
\begin{equation}\label{V-exp}
\ex V(X_t)\le e^{-\rho t}\ex V(X_0)\to0,
\end{equation}
as $t\to\infty$, given $\ex V(X_0)<\infty$. Condition of the type \label{LV2} with $z(\cdot)\ge\rho V(\cdot)$ plays also a crucial role
in establishing ergodic property of the SDEs,  \cite{MR1931266}.

Majority of the research on integrability or stability of the numerical schemes 
relies on "simple" Lyapunov functions such as 
 $V(x)=|x|^{p},~p\ge1$, see for example \cite{kloeden1992numerical,milstein2004stochastic,hutzenthaler2012numerical,hutzenthaler2012strong,sabanis2013note},
 with the exception of \cite{Hutzenthaler2014,hutzenthaler2012numerical}.
Our aim is to handle more general cases, particularly with Lyapunov functions of the form
\begin{equation}\label{different_V}
V(x)=\sum_{i=1}^dc_ix_i^{p_i},\quad c_1,\cdots,c_d\in\RR,
\end{equation}
where $p_i$'s (non-negative) are not necessarily identical. This is necessary if one hopes to analyse many important SDEs in literature, see  \cite{hutzenthaler2012numerical,Hutzenthaler2014}\footnote{In \cite{hutzenthaler2012numerical,Hutzenthaler2014} authors investigated integrability, but not stability, properties for the explicit schemes allowing for Lyapunov functions of the form \eqref{different_V}  } and \autoref{van_der_pol} in the current paper.
It turns out that for a special class of Lyapunov functions $V=|x|^p,~p\ge 2$, drift-implicit Euler schemes allow to obtain discrete time counterpart of \eqref{eq:V_stab}, without global Lipschitz conditions, see \cite{higham2003strong,Szpruch2010monotone,szpruch-diss}. However, the need of solving an implicit equation at iteration of the algorithm might be costly. 
Furthermore, recently it has been demonstrated that even explicit Euler schemes, if appropriately modified, can have $L^p$-integrability property (and as by product strongly converge). Such modification of explicit schemes are sometimes conventionally called ``taming" - see \cite{hutzenthaler2012strong,hutzenthaler2012numerical,Hutzenthaler2014,tretyakov2012fundamental,sabanis2013note}.
This leads us to the approximation of \eqref{eq:SDE}  by 
\begin{equation}\label{eq:TE}
\bx_{k+1}=\bx_k+b^h(t_k,\bx_k)h+\s^h(t_k,\bx_k)\D W_{k+1},~k\in\NN,
\end{equation}
where $t_k=kh$ with $0<h\le 1$ being the step length of the uniform discretisation of $I$, and $\D W_{k+1}=W_{t_{k+1}}-W_{t_k}$. Usually the taming method $(b^h,\s^h)$ is chosen s.t. $b^h(t,x)\to b(t,x),~\s^h(t,x)\to\s(t,x)$ as $h\to0$ uniformly\footnote{Precise definition of these limits may vary.} in $(t,x)\in I\times\RR^d$. We also introduce the notation of the diffusion operator of the numerical scheme \eqref{eq:TE}
\begin{equation*}
\mathcal{L}^hV(x)=\langle\nabla V(x),b^h(t,x)\rangle+\frac{1}{2}\tr\left[\s^h(t,x)V^{(2)}(x)\s^h(t,x)^\top\right],~\forall t\in I,~x\in\RR^d.
\end{equation*}
The main challenge for taming $(b^h,\s^h)$ is to maintain the structure of the diffusion operator (that is preserve conditions \eqref{LV1} or \eqref{LV2}), while benefit from extra control on the growth of the coefficients.
Although integrability was established in literature for some specific explicit schemes that can be written in the form \eqref{eq:TE} condition \eqref{eq:V_stab} is rarely recovered. For example,  methodology developed in \cite{hutzenthaler2012numerical} generally does not ensure the recovery of \eqref{eq:V_stab}. Also in \cite{tretyakov2012fundamental} where similar argument was adopted for $V(x)=|x|^p,~p\ge2$ and \eqref{eq:V_stab} has not been exactly recovered, but with the right-hand-side term being some higher moment (higher than $p$) of $X_0$. Using $V(x)=|x|^p$ again, it is indeed recovered in \cite{Sabanis2014}, but the method generally does not allow strong convergence rate to be higher than $1/4$. We will show that the strong convergence rate $1/2$ will not be deteriorated, if one uses a projected scheme \eqref{eq:proj} proposed in this paper, and it allows the use of a general Lyapunov functions for \eqref{eq:V_stab}.

So far, the problem of asymptotic stability received less attention in literature than integrability. Nonetheless, considerable effort has been made in this direction (mainly using implicit scheme) in \cite{MR1931266,higham2001mean,higham2000stability,higham2003exponential,szpruch-diss,higham2008almost,MR2658159,MR3082312}. We will extend these results in two ways: a) we allow for the general Lyapunov functions; b) we use explicit Euler scheme. We need to highlight that to the best of our knowledge asymptotic stability for the explicit numerical schemes beyond Lipschitz setting has not been considered in literature so far. The mechanics behind it seems similar to that of the integrability  - the big difference, however, lies in the recovery of condition \eqref{LV2}. The issue is that for the scheme \eqref{eq:TE} we often can only obtain, that 
\begin{equation*}
\mathcal{L}V(\cdot)\le-z(\cdot) \implies  \mathcal{L}^hV(\cdot) \le - \rho^h(\cdot) z(\cdot), \quad \rho^h(\cdot) \ge 0,
\end{equation*}
and the lack of strict lower bound for $\rho^h(\cdot) z(\cdot)$ prevents from obtaining required stability results.  
One would face the same problem trying to recover ergodicity property of the underlying SDE using scheme \eqref{eq:TE}, (see \cite{MR1931266})
Nevertheless, the schemes of type \eqref{eq:TE} can recover almost-sure stability property (If $\rho^h(\RR^d\setminus\{0\})>0$ and $\rho^h(0)=0$), but $V$-exponential stability \eqref{V-exp} seems not to hold. The issue can be resolved by the aforementioned projected schemes,
\begin{equation} \label{eq:proj}
\bx_{k+1}=\Pi(\bx_k+b(t_k,\bx_k)h+\s(t_k,\bx_k)\D W_{k+1}),
\end{equation}
where $\Pi:\RR^d\to\RR^d$ is a projection function that can be customised. Projected schemes preserve almost-sure stability, by ensuring that  $ \rho^h(\cdot)>c>0$. It also enables $V$-exponential stability \eqref{V-exp} to be easily recovered.  
%

In the last part of the this article we will investigate the preservation of non-negativity and comparison theorem using explicit schemes. This is aimed at those SDEs whose solutions, for example, only stay in $[0,\infty)$. We will see that $b(t,0)\ge0,~\s(t,0)\equiv0$ is enough to guarantee $X_t\ge0$ a.s., but not necessarily the case for numerical schemes. We will show that simply by truncating the noise as is done in Section 1.3.4 in \cite{milstein2004stochastic}, one can easily recover non-negativity of the tamed Euler scheme. The same technique can almost immediately be extended to preserve comparison theorem on the SDE level, also in non-globally Lipschitz setting.

To summarize the main contributions of this paper: \vspace{-3mm}
\begin{itemize} [noitemsep,nolistsep]
\item We formulate general conditions for various explicit schemes under which it is possible to recover \eqref{eq:V_stab}, for a rich family of Lyapunov functions - e.g., \eqref{different_V}. 
\item  We investigate the asymptotic stability properties in non-globally Lipschitz setting using explicit schemes and general Lyapunov functions.
\item We propose a novel projected Euler scheme that recovers exponential $V$-stability properties, which allows optimal rate of strong convergence.
\item  We establish comparison theorem for Euler scheme, form which we can deduce non-negativity. Both properties hold true in non-Lipschitz setting.  
\end{itemize}

The article is structured as follows: In section \ref{sec:int} we present key 
result of this paper Theorem \ref{th:main} from which we deduce integrability, convergence and later on asymptotic stability of explicit Euler schemes. In section \ref{sec:stab}
we focus on asymptotic stability results paying particular attention to condition 
\eqref{V-exp}. Finally in section \ref{sec:comp} we present comparison theorem and  positivity preservation result for the explicit Euler schemes.

\section{$V$-Integrability of Tamed Euler Schemes} \label{sec:int}

In this section we investigate the integrability of tamed Euler schemes $\{\bx_k\}$, \eqref{eq:TE} or \eqref{eq:proj}, of the SDE
\begin{equation}\label{sde_integ}
\td X_t=b(t,X_t)\td t+\s(t,X_t)\td W_t,~t\in[0,T],
\end{equation}
for some $T>0$ fixed. Following \cite{hutzenthaler2012numerical}, let $p,d\in \NN^+$, $\gamma\in(0,1/p]$ and we considered the following space of Lyapunov functions $\mathcal{V}_\gamma^p\subset\mathcal{C}^{p+1}(\RR^d)$, where for $\mathbb{N}\ni p\geqslant2$ and $0<\gamma\leqslant\frac{1}{p}$,
\begin{equation}\label{V}
\mathcal{V}^p_\gamma:=\left\{V:~V(\cdot)\ge0,~\ker(V)=\{0\},~\exists c>0~\textrm{s.t.}~\|V^{(s)}(\cdot)\|_{\textrm{HS}}\leqslant c(1+V(\cdot))^{1-s\gamma},~\forall s\le p\right\}.
\end{equation}
Here $\|\cdot\|_{\textrm{HS}}$ denotes the Hilbert-Schmidt norm and $V^{(s)}$ denotes the $s$-th order derivative of $V$. For example, $V^{(1)}=\nabla V$ and $V^{(2)}$ is the Hessian matrix of $V$. Note that the space $\mathcal{V}^p_\gamma$ not only covers power functions $|\cdot|^p,~p>0$, but also covers polynomials of the form \eqref{different_V}. Hence it is rich enough for one to choose suitable Lyapunov functions for many of important SDEs (see \cite{hutzenthaler2012numerical} for more details).

\begin{remark}\label{remark1}
The function $|\cdot|^p$ for some even number $p$ is a candidate in
\begin{equation*}
\bar{\mathcal{V}}^p_{1/p}=\mathcal{V}^p_{1/p}\cap\left\{V:~V^{(p+1)}(\cdot)\equiv0,~\exists c>0~\textrm{s.t.}~\|V^{(s)}(\cdot)\|_{\textrm{HS}}\leqslant cV(\cdot)^{1-\frac{s}{p}},~\forall s\le p\right\}.
\end{equation*}
\end{remark}

Once we fix a Lypaunov function  $V\in\mathcal{V}^p_\gamma$ it will be useful if the growth conditions of the coefficients of the SDE \eqref{sde_integ} can be expressed in terms of $V$.
\begin{assumption} \label{as:poly}
For $V\in \mathcal{V}_\gamma^p$, $\exists K,\kappa>0$, s.t. 
\begin{align*}
|b(t,x)|\vee\|\s(t,x)\| \leqslant K\left(1+V(x)^{\kappa\gamma}\right), \quad \forall t\in[0,T],~x\in \RR^d.
\end{align*}
\end{assumption}
Take $V(\cdot)=|\cdot|^p\in\bar{\mathcal{V}}^p_{1/p}$, then \autoref{as:poly} essentially imposes polynomial growth condition on the coefficients of the SDE \eqref{sde_integ}. Indeed, we may observe that if there exists $L>0$ such that $\forall t,x,~\levv b(t,x)\revv \leqslant L(1+ \levv x \revv^{\kappa_1})$, one can find $K>0$ such that $\levv b(t,x)\revv \le K(1+V(x))^{\kappa_1/p}$. The same applies to the diffusion coefficient with polynomial growth of degree $\k_2$ and let $\k=\k_1\vee\k_2$. Expressing all estimates in terms of the chosen Lyapunov function \footnote{This corresponds to the Lyapunov-type functions $\tilde{V}(\cdot):=1+V(\cdot)$ defined in \cite{hutzenthaler2012numerical}.} makes all calculations convenient and transparent.

\begin{definition}
Let $V\in\mathcal{V}^p_\g$. The solution to the SDE \eqref{sde_integ} is $V$-integrable, if
\begin{equation*}
\sup_{t\in[0,T]}\ex V(X_t)<\infty.
\end{equation*}
A numerical scheme $\{\bx_k\}$ of the SDE \eqref{sde_integ} with step size $h$ is $V$-integrable, if
\begin{equation*}
\sup_{h>0}\max_{0\le k\le\lf T/h\rf}\ex V(\bx_k)<\infty.
\end{equation*}
\end{definition}

To clarify the idea of this section without going into too much technical details let us consider a motivational example.
\begin{example}
Let $(X_t)_{0\le t\le T}$ be the solution to the following $1$-d autonomous SDE 
\begin{align} \label{eq:SDE_ex}
\td X_t = b(X_t)\td t + \s(X_t)\td W_t,
\end{align}
where $b$ and $\s$ satisfy \autoref{as:poly} and monotonicity condition:
\begin{align} \label{eq:con_ex}
2 xb(x) + |\s(x)|^{2}\le \rho(1+|x|^2) \quad \forall x\in \RR.
\end{align}
\end{example}
Note that \eqref{eq:con_ex} corresponds to the special case of the Lyapunov function $V(x)=|x|^2\in\hat{\mathcal{V}}_{1/2}^2$, and it immediately follows that $\forall t\ge0$,
\[
\ex V(X_t)<\ex(1+V(X_t))\le e^{\rho t}\ex(1+V(X_0)).
\]
We are seeking some condition under which the tamed Euler scheme
\[
\bx_{k+1} = \bx_{k} + b^{h}(\bx_k)h + \s^{h}(\bx_{k})\D W_{k+1},
\]
is also $V$-integrable (bounded second moments in this case). Let us first square both sides of the scheme to get
\begin{equation}\label{square}
\ex_k|\bx_{k+1}|^2 = |\bx_{k}|^2 + \big(2\bx_{k}b^{h}(\bx_{k}) + |\s^{h}(\bx_{k})|^2 \big)h  +  |b^{h}(\bx_{k})|^2h^2,
\end{equation}
where $\E_{k}(\cdot):=\E(\cdot|\F_{k})$. If a taming method is chosen such that $\exists\mu>0$,
\begin{align} \label{eq:tame1_ex}
|b^{h}(x)|^2h \le \mu(1+V(x)), \quad \forall x\in \RR,
\end{align}
and
\begin{align} \label{eq:tame2_ex}
2xb^{h}(x) + |\s^{h}(x)|^2\le \rho(1+V(x)), \quad \forall x\in \RR,
\end{align}
then $\forall1\le k\le\lf T/h\rf$,
\begin{align*}
\ex_k(1+V(\bx_{k+1}))\le&1+V(\bx_{k}) + (\rho +\mu)(1+V(\bx_k))h\\
\implies\ex V(\bx_{\lf T/h\rf})\le&e^{(\rho+\mu)T}\ex(1+V(X_0)).
\end{align*}
One can use taming method, e.g.,
\begin{equation} \label{eq:tame_ex}
b^{h}(t,x) := \frac{b(t,x)}{1+G_b(x,h)}, 
\quad 
\s^{h}(t,x) := \frac{\s(t,x)}{1+G_\s(x,h)}, \quad \forall t\in[0,T],~x\in \RR^d,
\end{equation}
for some $G_b(\cdot,\cdot),G_\s(\cdot,\cdot)\ge0$. Then condition \eqref{eq:tame2_ex} holds provided $1+ G_b(x,h) \le (1 + G_\s(x,h))^2$. Furthermore for \eqref{eq:tame1_ex} we take  $G_\s(x,h)= G_b(x,h):= CV(x)^{\k_0/2}h^{\be} $, with $C=K/\sqrt{\mu}$,
$k_0=(\kappa-1)_{+}$ and $\be=1/2$, so that
\begin{align*}
|b^h(x)|h^{1/2} = \frac{|b(x)|h^{1/2}}{1 + CV(x)^{\k_0/2}h^{1/2}} 
\le \frac{KV(x)^{\k/2}h^{1/2}}{1 + CV(x)^{\k_0/2}h^{1/2}}  
\le \sqrt{\mu} V(x)^{1/2},
\end{align*}
as required.

\subsection{Taming Conditions for $V$-Integrability}

The difficulty with exploring powerfulness of Lyapunov technique in the context of numerical schemes for SDEs is lack of discrete time It\^{o} formula. However suitable tamed schemes allows to recover many of the classical results by appropriately controlling a remainder terms of Taylor expansions.  This is a topic of Theorem \ref{th:main}.  

In the first part of this section we focus on the subspace of $\mathcal{V}^p_\gamma$ denoted by $\hat{\mathcal{V}}_\gamma^p=\mathcal{V}_{\g}^p\cap\{\nabla^{p+1}V\equiv0\}$ (this class contains almost all examples of polynomial Lyapunov functions presented in \cite{hutzenthaler2012numerical}). As an example one may consider a very popular Lypaunov function $V(x)=|x|^p,~p\ge2$, which allows us to explore so called one-sided Lipschitz property of the drift coefficient of the SDE \eqref{eq:SDE}. Later on we will show that integrability result can be extended to the whole space $\mathcal{V}_\g^p$. 

\begin{theorem} \label{th:main}
Suppose for the tamed coefficients in \eqref{eq:TE} there is a Lyapunov function $V \in \hat{\mathcal{V}}_\g^p,~p\ge2$ s.t. $\ex V(X_0)<\infty$ and
\begin{align} \label{eq:con_L}
\mathcal{L}^hV(x)\le \rho(1+V(x)),~\forall x\in\RR^d,
\end{align}
for some $\rho>0$. Also assume that one can find such tamed coefficients that $\exists\mu>0$ s.t.
\begin{equation} \label{eq:b_coeff}
\left|b^{h}(t,x)\right| h^{1/2}\vee\lev \s^h(t,x)\rev h^{1/4}  \leqslant \mu(1+V(x))^\g.
\end{equation}
Then there exists a constant $\tro:=\tro(\mu)$ s.t. 
\begin{equation*}
\E V(\bx_k)\leqslant e^{(\rho+\tro)T}\E(1+V(X_0))<\infty,\quad\forall0\le k\leqslant\lf T/h\rf.
\end{equation*}
\end{theorem}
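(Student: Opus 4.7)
The plan is to establish a discrete analogue of It\^o's formula up to a controllable error on each step, and then close the argument by a discrete Gronwall. Because $V\in\hat{\mathcal{V}}_\g^p$ has $V^{(p+1)}\equiv 0$, $V$ is a polynomial of degree at most $p$ and the Taylor expansion around $\bx_k$ is \emph{exact}:
\[
V(\bx_{k+1}) = V(\bx_k) + \sum_{s=1}^{p}\frac{1}{s!}V^{(s)}(\bx_k)\bigl[(\D\bx_k)^{\otimes s}\bigr],\qquad \D\bx_k:=b^h(t_k,\bx_k)h + \s^h(t_k,\bx_k)\D W_{k+1}.
\]
Expanding $(\D\bx_k)^{\otimes s}$ by multilinearity yields terms indexed by $i\in\{0,\dots,s\}$, the number of slots occupied by ``$b^hh$'' (the other $s-i$ slots being $\s^h\D W_{k+1}$). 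Applying $\E_k:=\E[\,\cdot\,|\F_k]$ annihilates every term with $s-i$ odd by Gaussian symmetry. The surviving pair $(s,i)=(1,1)$ contributes $\lj\nabla V(\bx_k),b^h\rj h$ and $(s,i)=(2,0)$ contributes $\tfrac12\tr[\s^h V^{(2)}(\bx_k)(\s^h)^\top]h$; their sum is precisely $h\,\mathcal{L}^hV(\bx_k)$, to which hypothesis \eqref{eq:con_L} applies directly.

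The heart of the proof is controlling all remaining $(s,i)$, i.e.\ those with either $i=s\geq 2$ or $(s,i)\neq(1,1),(2,0)$ and $s-i$ even. Combining the derivative bound $\|V^{(s)}\|_{\textrm{HS}}\leq c(1+V)^{1-s\g}$ from \eqref{V} with the two halves of \eqref{eq:b_coeff} one obtains
\[
|b^h|^i h^i = (|b^h|h^{1/2})^i h^{i/2} \leq \m^i(1+V)^{i\g}h^{i/2},
\]
\[
\E_k|\s^h \D W_{k+1}|^{s-i}\leq C_{s-i}\|\s^h\|^{s-i}h^{(s-i)/2} \leq C_{s-i}\m^{s-i}(1+V)^{(s-i)\g}h^{(s-i)/4}.
\]
Multiplying, the $(1+V)$-exponents telescope as $(1-s\g)+i\g+(s-i)\g=1$, and the $h$-exponent becomes $\tfrac{i}{2}+\tfrac{s-i}{4}=\tfrac{s+i}{4}$. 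A short case check shows that every surviving $(s,i)$ outside the two main terms satisfies $s+i\geq 4$, so each remainder is at most a constant (depending only on $\m,p,c$) times $h(1+V(\bx_k))$. This is exactly the balance engineered by pairing $h^{1/2}$ with $b^h$ and $h^{1/4}$ with $\s^h$ in \eqref{eq:b_coeff}.

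Summing the finitely many remainders produces $\tro=\tro(\m,p,c)$ and the one-step estimate
\[
\E_k V(\bx_{k+1}) \leq V(\bx_k) + h\,\mathcal{L}^hV(\bx_k) + \tro h(1+V(\bx_k)).
\]
Inserting \eqref{eq:con_L} and adding $1$ to both sides gives $\E_k(1+V(\bx_{k+1}))\leq(1+(\r+\tro)h)(1+V(\bx_k))$; taking full expectation, iterating for $k\leq\lf T/h\rf$ steps, and using $(1+(\r+\tro)h)^k\leq e^{(\r+\tro)T}$ delivers the claim. The main obstacle is the twin bookkeeping just used: the $(1+V)$-exponents coming from \eqref{V} and from \eqref{eq:b_coeff} must collapse to a single power of $(1+V)$, and the $h$-exponents must accumulate to at least $1$ for every $(s,i)$ that survives $\E_k$. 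Both are forced by the Lyapunov exponent $\g$ together with the specific $h^{1/2}/h^{1/4}$ split in \eqref{eq:b_coeff}; once they are verified, the discrete Gronwall step is routine.
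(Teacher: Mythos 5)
Your proposal is correct and follows essentially the same route as the paper's proof: the exact Taylor expansion permitted by $V^{(p+1)}\equiv0$, extraction of $h\,\mathcal{L}^hV$ from the $(1,1)$ and $(2,0)$ terms, cancellation of odd Gaussian terms under $\E_k$, and the same exponent bookkeeping combining the bounds in \eqref{V} with \eqref{eq:b_coeff} so every remainder is $O\bigl(h(1+V(\bx_k))\bigr)$, followed by the discrete Gronwall iteration. No gaps.
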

\begin{proof}
Since $V \in \hat{\mathcal{V}}_\g^p$, one has a finite Taylor expansion for
\begin{align} \label{eq:l0}
\E_k(1+V(\bx_{k+1})) = & 1 + V(\bx_k) + \E_k\sum_{|\a|\le p} \frac{\partial^{\a}V(\bx_{k})}{\a!}(\bx_{k+1}-\bx_{k})^\a.
\end{align}
For the convenience of notation denote $\bb_k:=b^h(t_k,\bx_k),~\bsig_k:=\sigma^h(t_k,\bx_k)$. Taking $1\le s \le p$ and by multi-index notation we have,
 \begin{align*}
& \E_{k}\sum_{|\a|=s}\frac{\partial^\a V(\bx_{k})}{\a! }(\bx_{k+1}-\bx_{k})^\a\\ 
&= \E_{k}\sum_{|\a|=s}\frac{\partial^\a V(\bx_{k})}{\a! }
\sum_{\nu \le \a} \binom{\a}{\nu} (\bb_kh)^{\a-\nu}(\bsig_k\D W_{k+1})^{\nu}\num\label{eq:l1}, 
\end{align*} 
which by multinomial theorem  is equivalent to
\begin{align*}
& \E_{k}\sum_{|\a|=s}\frac{\partial^\a V(\bx_{k})}{\a! }(\bx_{k+1}-\bx_{k})^\a\\ 
& = \frac{1}{s!}\E_{k}\sum_{\be_1+\ldots+\be_d=s}\binom{s}{\be_1,\ldots,\be_d}\Big(\left(\bx_{k+1}^{(1)}-\bx_{k}^{(1)}\right)^{\be_1}
 \cdots\left(\bx_{k+1}^{(d)}-\bx_{k}^{(d)}\right)^{\be_d}\Big) 
\frac{\partial^s}{\partial x_1^{\be_1}\ldots\partial x_d^{\be_d}}V(\bx_{k}),
 \end{align*}
where, for $i=1,\ldots,d$, 
\begin{align*}
&\left(\bx_{k+1}^{(i)}-\bx_{k}^{(i)}\right)^{\be_i} = \left(\bb_k^{(i)} + \sum_{j=1}^{m}\bsig_k^{(ij)}\D W_{k+1}^{(j)}\right)^{\be_i} \\
&= \sum_{\delta^{i}_0+\ldots+\delta^{i}_m=\be^{i}}\binom{\be_i}{\delta^{i}_0,\ldots,\delta^{i}_m}
(\bb_k^{(i)})^{\delta^{i}_0}(\bsig_k^{(i1)})^{\delta^{i}_1}\cdots(\bsig_k^{(im)})^{\delta^{i}_m}(\D W_{k+1}^{(1)})^{\delta^{i}_1}\cdots(\D W_{k+1}^{(m)})^{\delta^{i}_m}.
\end{align*}
Therefore, it is easy to see that the first two terms of \eqref{eq:l1} are of the form 
\begin{align*}
 \E_{k}\sum_{|\a|=1}\frac{\partial^\a V(\bx_{k})}{\a! }(\bb_kh + \bsig_k\D W_{k+1})^\a = \lj\bb_k,\nabla V(\bx_k)\rj h,
\end{align*}
and
\begin{align*}
&\E_{k}\sum_{|\a|=2}\frac{\partial^\a V(\bx_{k})}{\a! }(\bb_kh + \bsig_k\D W_{k+1})^\a = \frac{1}{2}\sum_{s=1}^{m} \lj\bsig_k^{(s)},\nabla^2V(\bx_{k})\bsig_k^{(s)}\rj h + \frac{1}{2} \lj \bb_k,\nabla^2V(\bx_{k})\bb_k\rj h^2 \\
& =  \frac{1}{2}\sum_{i,j=1}^{d}\sum_{s=1}^{m} \frac{\partial^2 V}{\partial x_i \partial x_j}(\bx_{k}) \bsig_k^{(is)}\bsig_k^{(js)}h + \frac{1}{2}\sum_{i,j=1}^{d} \frac{\partial^2 V}{\partial x_i \partial x_j}(\bx_k) \bb_k^{(i)}\bb_k^{(j)}h^2\\
&\le\frac{1}{2}\tr\left[V^{(2)}(\bx_k)\bsig_k\bsig_k^\top\right]h+\frac{1}{2}\lev V^{(2)}(\bx_k) \rev\levv\bb_k\revv h^2.
\end{align*}
We can now analyse the rest of the expansion \eqref{eq:l0} with $|\a|\ge3$. Recall that
\begin{equation*}
\left(\bx^{(i)}_{k+1}-\bx^{(i)}_{k}\right)^s = \left(\bb_k^{(i)} + \sum_{j=1}^{m}\bsig_k^{(ij)}\D W_{k+1}^{(j)}\right)^{s} \le \sum_{r=0}^s\binom{s}{r}\left(\bb_k^{(i)}h\right)^{s-r}\left(\bsig_k^{(i,\cdot)}\D W_{k+1}\right)^r.
\end{equation*}
Notice that, due to marginality property of Brownian motion, the terms with odd $r$'s in the above summation are zero under conditional expectation $\ex_k$. Hence, with a bit relabelling,
\begin{align*}
\E_{k}&\sum_{|\a|=s}\frac{\partial^\a V(\bx_{k})}{\a! }(\bx_{k+1}-\bx_{k})^\a \le \lev V^{(s)}(\bx_k) \rev \frac{d^{s-1}}{s!}\sum_{r=0}^{\lfloor s/2\rfloor}\binom{s}{2r}\levv\bb_k\revv^{s-2r}\lev\bsig_k\rev^{2r}h^{s-r}\\
\le&\phi_s\lev V^{(s)}(\bx_k) \rev\sum_{r=0}^{\lfloor s/2\rfloor}\levv\bb_k\revv^{s-2r}\lev\bsig_k\rev^{2r}h^{s-r},
\end{align*} 
where $0<\phi_s\le d^{s-1}\binom{s}{r}/s! \le d^{s-1}/(\lfloor s/2\rfloor!)^2$. Returning to \eqref{eq:l0} and using the above estimates, we obtain
\begin{equation}\label{taylor}
\E_{k}(1+V(\bx_{k+1})) = 1+V(\bx_{k}) + \mathcal{L}^hV(\bx_k)h + R^hV(\bx_k),
\end{equation}
where, by relabelling the indices in the summation,
\begin{equation}\label{Rh}
R^hV(\bx_k)\le \frac{1}{2}\lev V^{(2)}(\bx)\rev\levv\bb_k\revv^2h^2+\sum_{\substack{3\le i+2j\le p\\i,j\in\NN}}\phi_{i+2j} \lev V^{(i+2j)}(\bx_k) \rev\levv \bb_k\revv^i\lev\bsig_k\rev^{2j}h^{i+j}.
\end{equation}
Now given that
\begin{align*}
\levv b^{h}(t,x)\revv h^{1/2}\vee\lev\s^h(t,x)\rev h^{1/4} \le \mu(1+V(x))^\gamma,
\end{align*}
and that $\exists c>0$ for all $x \in\RR^d$  such that for $\forall1\leqslant s\leqslant p$, 
\[
\lev V^{(s)}(x)\rev \le c(1+V(x))^{1-s\gamma},
\]
we have
\begin{align*}
R^hV(\bx_k) \le & \frac{1}{2}c\mu^2(1+V(\bx_k))h+\sum_{\substack{3\le i+2j\le p\\i,j\in\NN}}\phi_{i+2j}c\mu^{i+2j}(1+V(\bx_k))h^\frac{i+j}{2}\\
\le & \left(\frac{1}{2}c\mu^2+\sum_{s=3}^p\phi_sc\mu^s\right)(1+V(\bx_k))h
\end{align*}
Choose $\tro:=c\mu^2/2+c\sum_{s=3}^p\phi_s\mu^s$ and the theorem is proved.
\end{proof}

\begin{remark}
For $p=2$ one only needs to check condition \eqref{eq:b_coeff} for $b^h(\cdot,\cdot)$.
\end{remark}

\begin{remark} \label{rmk:const}
For the practical implementation we can take $\mu\le 1$ in \eqref{eq:b_coeff} and since
$
\sup_{3 \le s \le p }\phi_s \le d^{p-1},
$
and we immediately can choose
$
\tro:= c(p-1)d^{p-1}\mu^2.
$
Therefore by a suitable choice of parameter $\mu,~\tro$ can be arbitrarily small. 
\end{remark}

In the similar way we extend applicability of tamed Euler schemes to Lyapunov-type functions $V \in \mathcal{V}^p_\gamma$. It turns out that the smoothness of $V$ affects the rate of taming of the diffusion coefficient. 
\begin{proposition} \label{lem:main}
Let $V \in \mathcal{V}^p_\gamma,~p\ge2$. Suppose $\exists\rho>0$ s.t. $\mathcal{L}^hV(\cdot)\le\rho V(\cdot)$, and $\exists\mu>0$ s.t. 
\begin{equation}
\levv b^{h}(t,x)\revv h^{\be_1}\vee\lev \s^h(t,x)\rev h^{\be_2} \le \mu(1+V(x))^\g,~\forall t,x,
\end{equation}
where $\be_1\le 1/2$ and $\be_2\le 1/2-1/(p\wedge4)$. 
Then $\exists\tro:=\tro(\mu)$ s.t. 
\[
\E V(\bx_k) \le e^{(\rho + \tro)T} \E(1+V(X_{0})),~\forall 0\le k\le\lf T/h\rf.
\]
\end{proposition}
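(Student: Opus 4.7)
Plan. I plan to replay the strategy of Theorem \ref{th:main}: since functions in $\mathcal{V}^p_\g\setminus\hat{\mathcal{V}}^p_\g$ do not satisfy $V^{(p+1)}\equiv 0$, the one-step Taylor expansion must now carry a genuine remainder, and the key is to write this remainder in a form involving only $V^{(p)}$, for which \eqref{V} provides a growth bound.

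I would first apply Taylor's formula at $\bx_k$ with integral remainder at order $p$:
$$V(\bx_{k+1})-V(\bx_k)=\sum_{1\le|\alpha|\le p-1}\frac{\pa^\alpha V(\bx_k)}{\alpha!}\D_k^\alpha + p\sum_{|\alpha|=p}\frac{\D_k^\alpha}{\alpha!}\int_0^1(1-t)^{p-1}\pa^\alpha V(\bx_k+t\D_k)\,dt,$$
with $\D_k:=\bx_{k+1}-\bx_k$. Taking $\E_k$ of the exact sum and running the multinomial expansion as in the proof of Theorem \ref{th:main} reproduces $\mathcal{L}^hV(\bx_k)h$ plus a truncated remainder of the form \eqref{Rh} (now limited to $i+2j\le p-1$). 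Each such summand is bounded, via \eqref{V} and the taming hypothesis, by $c\mu^{i+2j}(1+V(\bx_k))\,h^{i(1-\be_1)+j(1-2\be_2)}$; the binding case is the pure-diffusion term at $s=4$ ($i=0,\,j=2$), which demands $\be_2\le 1/4$ and is active as soon as $p\ge 5$, accounting for the ``$p\we 4$'' side of the stated condition.

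The genuinely new piece is the integral remainder $\tilde R_p$. Because $|\nabla V|\le c(1+V)^{1-\g}$, integration along rays shows that every $V\in\mathcal V^p_\g$ has polynomial growth of order at most $1/\g$; in particular, uniformly in $z\in[\bx_k,\bx_{k+1}]$,
$$(1+V(z))^{1-p\g}\le C\bigl(1+V(\bx_k)^{1-p\g}+|\D_k|^{1/\g-p}\bigr).$$
Combined with $\|V^{(p)}(z)\|_{\mathrm{HS}}\le c(1+V(z))^{1-p\g}$, this yields the pointwise estimate $|\tilde R_p|\le C|\D_k|^p(1+V(\bx_k)^{1-p\g})+C|\D_k|^{1/\g}$. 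The standard moment bound
$$\E_k|\D_k|^s\le C_s\mu^s(1+V(\bx_k))^{s\g}\bigl(h^{s(1-\be_1)}+h^{s(1/2-\be_2)}\bigr),$$
applied at $s=p$ and $s=1/\g$, then delivers $\E_k|\tilde R_p|\le\tro_2\,(1+V(\bx_k))h$ exactly when $p(1/2-\be_2)\ge 1$, i.e.\ $\be_2\le 1/2-1/p$; this dominates the ``$p\we 4$'' constraint precisely when $p\le 4$.

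Putting the two contributions together produces the one-step recursion $\E_k V(\bx_{k+1})\le(1+(\rho+\tro)h)(1+V(\bx_k))$ with $\tro:=\tro_1+\tro_2$ depending only on $\mu$, and discrete Gronwall finishes the argument as in Theorem \ref{th:main}. I expect the hardest step to be the control of $V^{(p)}$ over the \emph{random} segment $[\bx_k,\bx_{k+1}]$: unlike in $\hat{\mathcal V}^p_\g$, the Taylor expansion does not terminate, and one is forced to exploit the global polynomial growth of $V$ itself to keep the integrand in $\tilde R_p$ manageable. It is precisely this step that forces the tighter condition on $\be_2$ when $p$ is small and produces the characteristic exponent $1/2-1/(p\we 4)$.
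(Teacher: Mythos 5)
Your proposal is correct and follows essentially the same route as the paper's proof: a Taylor expansion to order $p$ with integral remainder, the intermediate terms handled exactly as in Theorem \ref{th:main}, and the remainder controlled via the perturbation estimate $(1+V(x+y))^{1-p\g}\lesssim(1+V(x))^{1-p\g}+|y|^{1/\g-p}$ combined with the taming bounds and Gaussian moments, leading to the same one-step recursion and the exponent $1/2-1/(p\wedge4)$. The only cosmetic difference is that you re-derive the growth inequality for $V$ by integrating the bound on $\nabla V$ along rays, whereas the paper cites it as Lemma 2.12 of \cite{hutzenthaler2012numerical}.
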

\begin{proof}
The proof is very similar to the proof of \autoref{th:main}. We write
\begin{align} \label{eq:expansion_tr}
\ex_k(1+V(\bx_{k+1}))= &1+V(\bx_k) + \sum_{|a|\leq p-1} \frac{ \partial^{\a}V(\bx_k)}{\a!}(\bx_{k+1}-\bx_k)^{\a} \nonumber \\
& +  p \sum_{|\a|=p}\frac{(\bx_{k+1}-\bx_{k})^{\a}}{\a!}\int_{0}^{1}
(1-t)^{p-1}\partial^{\a}V(\bx_k+t(\bx_{k+1}-\bx_k))\td t. 
\end{align}
It is therefore enough to look into a remainder term for $p\ge2$
\begin{align*}
\tilde{R}^h:= & p\sum_{|\a|=p}\frac{(\bx_{k+1}-\bx_k)^{\a}}{\a!}\int_{0}^{1}
(1-t)^{p-1}\partial^{\a}V(\bx_k+t(\bx_{k+1}-\bx_k))\td t \\
\le & p\sum_{|\a|=p}\frac{|(\bx_{k+1}-\bx_k)^{\a}|}{\a!}\int_{0}^{1}
(1-t)^{p-1} \lev V^{(p)}(\bx_k+t(\bx_{k+1}-\bx_k)) \rev \td t  \\
\le & cp\sum_{|\a|=p}\frac{|(\bx_{k+1}-\bx_k)^{\a}|}{\a!}
\int_{0}^{1} (1-t)^{p-1}\left(1+V(\bx_k+t(\bx_{k+1}-\bx_k))\right)^{1-p\gamma} \td t.
\end{align*}
By Lemma 2.12 in \cite{hutzenthaler2012numerical} we have
\[
1+V(x+y)\le c^\frac{1}{\g}2^{\frac{1}{\g}-1}\Big(1+V(x) + \levv y\revv^\frac{1}{\g} \Big),~\forall x,y\in\RR^d,
\]
which leads to 
\begin{align*}
\Big(1+V(x+y)\Big)^{1-p\g} \le & c^{\frac{1}{\g}-p}2^{\big(\frac{1}{\gamma}-p\big)(1-\gamma)}\Big(1+V(x) + \levv y\revv^\frac{1}{\g} \Big)^{1-p\g} \\
 \le & (2c)^{\frac{1}{\g}-p}\Big((1+V(x))^{1-p\g} + \levv y\revv^{\frac{1}{\g}-p} \Big),
\end{align*}
for $\g\in(0,1/p]$. Consequently 
\begin{align*}
\tilde{R}^h \le & cp \sum_{|\a|=p}\frac{|(\bx_{k+1}-\bx_k)^{\a}|}{\a!}
(2c)^{\frac{1}{\g}-p}\Big((1+V(\bx_k))^{1-p\g} + \levv \bx_{k+1} - \bx_k \revv^{\frac{1}{\g}-p} \Big) \\
\le & p\frac{c^{\frac{1}{\g}-p+1}2^{\frac{1}{\g}-p}}{p!}\left(\sum_{i=1}^d |\bx^{(i)}_{k+1} - \bx^{(i)}_{k}|\right)^p \Big((1+V(\bx_k))^{1-p\g} + \levv \bx_{k+1} - \bx_k \revv^{\frac{1}{\g}-p} \Big) \\
\le & \frac{d^{p-1}  c^{\frac{1}{\g}-p+1 }2^{\frac{1}{\g}-p}}{(p-1)!}\levv \bx_{k+1} - \bx_k \revv^{p} \Big((1+V(\bx_k))^{1-p\g} + \levv \bx_{k+1} - \bx_k \revv^{\frac{1}{\g}-p} \Big)\\
\le & c \tilde{\psi}\left(\levv \bb_k \revv^ph^p + \lev \bsig_k \rev^ph^{p/2}\right)(1+V(\bx_k))^{1-p\g} +c\tilde{\psi} \big( \levv \bb_k \revv^\frac{1}{\g}h^\frac{1}{\g} + \lev \bsig_k \rev^\frac{1}{\g}h^\frac{1}{2\g} \big),
\end{align*}
where by the similar argument as in \autoref{rmk:const},
\[
\tilde{\psi} = \frac{(d(m+1))^{\frac{1}{\gamma}-1} (2c)^{\frac{1}{\g}-p}}{(p-1)!}.
\]
Now given that $\exists\beta_{1,2},\mu>0$ s.t.
\begin{align*}
\levv b^{h}(t,x)\revv h^{\be_1} \le \mu(1+V(x))^\g~\text{and}~\lev \s_k^h(t,x)\rev h^{\be_2}  \le \mu(1+V(x))^\g,
\end{align*}
then $\exists\tro=\tro(\mu)>0$ s.t.
\[
R^hV(\bx_k) \le \tro (1+V(\bx_{k})) h,
\]
as in \eqref{taylor}, where for $p\geqslant2$,
\begin{align*}
R^h&V(\bx_k) \le \frac{1}{2}\lev V^{(2)}(\bx)\rev\levv\bb_k\revv^2h^2+\sum_{\substack{3\le i+2j\le p-1\\i,j\in\NN}}\phi_{ij} \lev V^{(i+2j)}(\bx_k) \rev\levv \bb_k\revv^i\lev\bsig_k\rev^{2j}h^{i+j} + \tilde{R}^h\\
\le & \left(\frac{1}{2}c\mu^2h^{1-2\be_1}+\sum_{\substack{3\le i+2j\le p-1\\i,j\in\NN}}\phi_{ij}c\mu^{i+2j}h^{(1/2-\be_1)i+(1/2-2\be_2)j}\right)(1+V(\bx_k))h \\
& + c\mu^p\tilde{\psi}(1+V(\bx_k))\Big(h^{p(1-\beta_1)-1}+h^{p(1/2-\beta_2)-1}\Big)h+c\mu^\frac{1}{\g}\tilde{\psi}(1+V(\bx_k))\left(h^{\frac{1-\beta_1}{\g}-1}+h^{\frac{1-2\beta_2}{2\g}-1}\right)h\\
\le & \tro (1+V(X_k)) h,
\end{align*}
for $\be_1\le1/2$ and $\be_2\le1/2-1/(p\wedge4)$, and
\begin{equation*}
\tro := \frac{1}{2}c\mu^2+c\sum_{s=3}^{p-1}\mu^s\phi_s+2c\mu^p\tilde{\psi},
\end{equation*}
where $\{\phi_s\}$ are as in \autoref{rmk:const}.
\end{proof}


\subsection{Taming Choices}\label{sec:taming_choices}

The results in the previous section give us general conditions for the tamed Euler scheme \eqref{eq:TE}. A natural question would be if the assumptions in \autoref{th:main} and \autoref{lem:main} can be satisfied with specific taming methods, i.e., for $V\in\mathcal{V}^p_\g$ whether
\begin{equation}\label{eq:Lyp_ex}
\mathcal{L}V(x)\le\rho(1+V(x))\implies\mathcal{L}^hV(x)\le\bar{\rho}(1+V(x)),~\forall x\in\RR^d,
\end{equation}
for some $\rho,\bar{\rho}>0$, and
\begin{equation} \label{eq:con_tame}
|b^{h}(t,x)| h^{\be_1} \vee \|\s^h(t,x)\| h^{\be_2} \le \mu(1+V(x))^\g,~\forall t\in[0,T],~x\in\RR^d,
\end{equation}
for some $\be_1\le1/2$ and $\be_2\le1/2-1/(p\wedge4)$ hold.

\paragraph{Balanced Schemes}Let us first look at the balanced schemes proposed in \cite{hutzenthaler2012strong,tretyakov2012fundamental,Sabanis2014}, which in general is of the form
\begin{equation}\label{scm11}
b^{h}(t,x) := \frac{b(t,x)}{1+G_b(x,h)},~\s^{h}(t,x) := \frac{\s(t,x)}{1+G_\s(x,h)},~\forall t,x,
\end{equation}
where $0\leq G_b(\cdot,h),G_\s(\cdot,h) \to 0 $ as $h \to 0$. In this case requirement \eqref{eq:Lyp_ex} is interpreted as
\begin{align*}
\mathcal{L}^hV(x) &:=\nabla V(x)\cdot b^h(t,x)+\frac{1}{2}\tr\left[V^{(2)}(x)\s^h(\s^h)^\top(t,x)\right]\\
& = \frac{\nabla V(x)\cdot b(t,x)}{1+ G_b(x,h)}
+\frac{1}{2}\frac{\tr\left[V^{(2)}(x)\s\s^\top(t,x)\right]}{(1+ G_\s(x,h))^2} \le \rho (1+V(x)).
\end{align*}
Hence, condition \ref{eq:Lyp_ex} holds if either of the following conditions is satisfied:
\begin{itemize}\vspace*{-5mm}
\item[i)]$1+ G_b(x,h) = ( 1+ G_\s(x,h))^2,~\forall x,h$;
\item[ii)]$1+ G_b(x,h) \le ( 1+ G_\s(x,h))^2,~\forall x,h$, if  $\tr\left[V^{(2)}(x)\s\s^\top(t,x)\right]>0,~\forall x\in \RR^d$ (which will be the case for most Lyapunov functions).
\end{itemize}\vspace*{-5mm}
One may consider case i) and let, e.g.,
\begin{align*}
G_b(x,h):= 2CV(x)^{\k^\ast\g}h^{\be_2} + C^2V(x)^{2\k^\ast\g}h^{2\be_2}
\quad \text{and} \quad G_\s(x,h):= CV(x)^{\k^\ast\g}h^{\be_2}.
\end{align*} 
In order for \eqref{eq:con_tame} to hold we take $\be_1=2\be_2$, $C\ge K/\mu$ and $k^\ast\ge\k-1$ so that
\begin{align*}
\|\s^h(t,x)\| h^{\be_2} = \frac{\|\s(t,x)\| h^{\be_2}}{1+CV(x)^{\k^\ast\g}h^{\be_2}}
\le \frac{K(1+V(x))^{\k\g} h^{\be_2}}{1+CV(x)^{\k^\ast\g}h^{\be_2}} \le\mu (1+V(x))^\g,
\end{align*}
by \autoref{as:poly}. We also need to choose $C^2\ge K/\mu$ so that 
\begin{align*}
|b^{h}(t,x)| h^{\be_1} \le \frac{K(1+V(x))^{\k\g} h^{2\be_2}}{1+ 2CV(x)^{\k^\ast\g}h^{\be_2} + 
C^2V(x)^{2\k^\ast\g}h^{2\be_2}}\le \mu (1+V(x))^\g,
\end{align*}
as $2\k^\ast\ge\k-1$. Therefore we choose $\k^\ast\ge\k-1$ and $C\ge (K/\mu)\vee1$, which gives a reasonable taming method for the scheme to be bounded with respect to $V$.

\paragraph{Projected Schemes} Here we propose a new projected Euler scheme: 
\begin{equation}\label{scm21}
\bx_{k+1}=\Pi\left(\bx_k+b(t_k,\bx_k)h+\sigma(t_k,\bx_k)\Delta W_{k+1}\right),
\end{equation}
where $\Pi:\mathbb{R}^d\to\mathbb{R}^d$ defined s.t. $|\Pi x|=|x|\wedge h^{-r}$ with $r>0$ to be chosen. For example one can define $\Pi x=\left(\Pi_ix_i\right)_{i=1}^d$ as a truncation, where $\Pi_ix_i=(-h^{-r}\vee x_i\wedge h^{-r})/\sqrt{d}$, or $\Pi(x)=\min\{1,h^{-r}|x|^{-1}\}x$.  See also \cite{Jean-FrancoisChassagneux2014} for different types of projected Euler scheme for scalar SDEs. In order to ensure $|\bx_k|\le h^{-r}$ for all $k\ge0$ we may assume $|X_0|\le h^{-r}$, otherwise send in $\Pi X_0$ for the first iteration. Integrability of this scheme becomes very straightforward if a little bit more assumption is imposed on the Lyapunov function. Later on in the stability section we will show that this scheme gives a solution to the preservation of $V$-exponential stability, which balanced schemes fail to achieve.
\begin{theorem}\label{project+tame}
Let \autoref{as:poly} hold and $V\in\mathcal{V}^p_\g$ s.t. $V(x)\le V(y)\le\nu(1+|y|^q)$ for some $\nu>0$ and all $x,y\in\RR^d$ s.t. $|x|\le|y|$. Suppose $\ex V(X_0)<\infty$, and
\begin{equation*}
\mathcal{L}V(x)\le\rho(1+V(x)),~\forall x\in\RR^d,
\end{equation*}
for some $\rho>0$. Then, for $r\le\be_2/((\k-1)q\g)$, the projected scheme \eqref{scm21} is $V$-integrable, where $\be_2$ is as in \autoref{lem:main}.
\end{theorem}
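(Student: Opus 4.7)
The plan is to exploit the monotonicity hypothesis on $V$ to bound $V(\bar{X}_{k+1})$ by $V(Z_{k+1})$, where
\[
Z_{k+1} := \bar X_k + b(t_k,\bar X_k)h + \sigma(t_k,\bar X_k)\Delta W_{k+1}
\]
is the pre-projection one-step update. Since every projection considered satisfies $|\Pi z|\le|z|$ and the assumption gives $V(x)\le V(y)$ whenever $|x|\le|y|$, we have $V(\bar X_{k+1}) = V(\Pi Z_{k+1}) \le V(Z_{k+1})$. This is the observation that reduces the analysis of the projected scheme to a single un-tamed Euler step, but one starting from a state confined to $|\bar X_k|\le h^{-r}$, which is where the implicit ``taming'' comes from.

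First I would note that iterating the projection (together with $|X_0|\le h^{-r}$) gives $|\bar X_k|\le h^{-r}$ for every $k\ge 0$, and hence by the growth hypothesis $V(\bar X_k)\le\nu(1+h^{-rq})$. Using this together with \autoref{as:poly}, I would verify that the coefficients $\bar b_k:=b(t_k,\bar X_k)$ and $\bar\sigma_k:=\sigma(t_k,\bar X_k)$ behave as if they were tamed in the sense of \eqref{eq:con_tame}. Explicitly, writing $K(1+V(\bar X_k)^{\kappa\gamma})\le K(1+V(\bar X_k))^{\gamma}\cdot(1+V(\bar X_k))^{(\kappa-1)\gamma}$ and using $(1+V(\bar X_k))^{(\kappa-1)\gamma}\le Ch^{-rq(\kappa-1)\gamma}$ for $h\le 1$, I obtain
\[
|\bar b_k|\,h^{\beta_1}\vee\|\bar\sigma_k\|\,h^{\beta_2} \le \mu(1+V(\bar X_k))^{\gamma},
\]
provided $\beta_1,\beta_2 \ge rq(\kappa-1)\gamma$. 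Taking the binding constraint (the smaller exponent $\beta_2 \le 1/2-1/(p\wedge 4)$) gives the stated range $r\le \beta_2/((\kappa-1)q\gamma)$.

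Next I would redo the Taylor expansion of $V(Z_{k+1})$ about $\bar X_k$ exactly as in the proof of \autoref{lem:main}. Since $Z_{k+1}-\bar X_k = \bar b_k h + \bar\sigma_k\Delta W_{k+1}$ uses the \emph{untamed} coefficients, the first-order-in-$h$ part of the expansion is $\mathcal{L}V(\bar X_k)h$, to which the hypothesis $\mathcal{L}V\le\rho(1+V)$ applies directly, and the higher-order remainder $\tilde R^h V(\bar X_k)$ is controlled by $\tilde\rho(1+V(\bar X_k))h$ thanks to the coefficient bounds just verified, by word-for-word repetition of the estimates in \autoref{lem:main}. Combining with $V(\bar X_{k+1})\le V(Z_{k+1})$ and taking $\E_k$ yields
\[
\E_k(1+V(\bar X_{k+1})) \le (1+(\rho+\tilde\rho)h)(1+V(\bar X_k)),
\]
and the discrete Grönwall inequality gives $\E V(\bar X_k)\le e^{(\rho+\tilde\rho)T}\E(1+V(X_0))$ for all $0\le k\le\lfloor T/h\rfloor$.

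The main (and essentially only non-routine) obstacle is recognising that the monotonicity of $V$ along the radial direction is exactly what lets the projection disappear after one step, so that the analysis of \autoref{lem:main} can be invoked with the untamed $b,\sigma$ playing the role of $b^h,\sigma^h$. Once that is in place, the check of \eqref{eq:con_tame} on the restricted region $\{|x|\le h^{-r}\}$ is immediate from \autoref{as:poly} and the polynomial bound on $V$, and no new analytic input is required beyond what \autoref{lem:main} already delivers.
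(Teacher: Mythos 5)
Your proposal is correct and follows essentially the same route as the paper: use $|\Pi z|\le|z|$ together with the radial monotonicity of $V$ to drop the projection after one step, note $|\bx_k|\le h^{-r}$, verify the taming bound \eqref{eq:con_tame} for the untamed coefficients on $\{|x|\le h^{-r}\}$ via \autoref{as:poly} and $V\le\nu(1+|\cdot|^q)$ with $r\le\be_2/((\k-1)q\g)$, and then invoke the expansion and remainder estimates of \autoref{lem:main}. No substantive differences from the paper's argument.
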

\begin{proof}
The same arguments adopted in the proofs of \autoref{th:main} and \autoref{lem:main} imply
\begin{align*}
V(\bx_{k+1})=&V\left(\Pi(\bx_k+b(t_k,\bx_k)h+\sigma(t_k,\bx_k)\Delta W_{k+1})\right)\\
\leqslant&V(\bx_k+b(t_k,\bx_k)h+\sigma(t_k,\bx_k)\Delta W_{k+1})\\
\leqslant&V(\bx_k)+\mathcal{L}V(\bx_k)h+R^hV(\bx_k)+M_{k+1},\num\label{rem1}
\end{align*}
where $M_{k+1}$ is a local martingale, as the expression given in \eqref{taylor}. This immediately shows that one only needs to work with $\mathcal{L}V(x),~b(t,x)$ and $\s(t,x)$ directly for $|x|\le h^{-r}$. Thus \eqref{eq:Lyp_ex} is redundant and
\begin{align}
|b(t,x)|h^{\be_1}\vee\|\s(t,x)\|h^{\be_2}\le&K(1+V(x))^{\k\g}h^{\be_2}\le2K\nu\left(1+|x|^{q(\k-1)\g}\right)(1+V(x))^\g h^{\be_2}\nonumber\\
\le&4K\nu h^{\be_2-r(\k-1)q\g}(1+V(x))^\g=:\mu(1+V(x))^\g,\label{trun_bound}
\end{align}
by choosing $r\le\be_2/((\k-1)q\g)$, which achieves \eqref{eq:con_tame}. The result thus follows by \autoref{lem:main}.
\end{proof}

\paragraph{Strong Convergence} Now given the integrability (bounded moments) of the scheme we can explain how in general one may establish strong convergence of the tamed Euler scheme \eqref{eq:TE}. We build up the strong convergence of tamed Euler schemes on the results established in \cite{hutzenthaler2012numerical} (see their Definition 3.1 and Corollary 3.12) and \cite{tretyakov2012fundamental}  (see the proof of Lemma 3.2 and Theorem 2.1). Roughly, both results state that provided that appropriate moment bounds (corresponding to $V(\cdot)=|\cdot|^p$) for the tamed Euler \eqref{eq:TE} are achieved, and that the strong and weak one-step differences against the standard Euler scheme are given by appropriate rates, then the tamed Euler scheme \eqref{eq:TE} converges to the solution of the SDE \eqref{eq:SDE} in $L^p$. Precise statements are made in Appendix \autoref{strongconvergence}.

\begin{proposition}\label{trun_conv}
Under appropriate assumptions (more precisely, let \autoref{ass:convergence} in Appendix \autoref{strongconvergence} hold for $p=2$ and some even number $p_0>2$ sufficiently large), the projected schemes \eqref{scm21} converges in $L^2$ with rate $1/2$ for $r<1/(2(\k-1))$.
\end{proposition}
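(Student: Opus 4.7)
The plan is to reduce the proposition to the abstract strong-convergence framework recalled in Appendix \autoref{strongconvergence} (namely \autoref{ass:convergence}), which delivers rate-$1/2$ $L^2$ convergence whenever (i) the scheme has uniformly bounded moments of a sufficiently high order $p_0$, and (ii) the strong and weak one-step differences between the candidate scheme and the reference one-step explicit Euler update $Y_{k+1}:=\bar X_k+b(t_k,\bar X_k)h+\sigma(t_k,\bar X_k)\Delta W_{k+1}$ decay fast enough in $h$.

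For (i) I would apply \autoref{project+tame} with the Lyapunov function $V(x)=|x|^{p_0}$ for a sufficiently large even $p_0$. Since $V\in\mathcal{V}^{p_0}_{1/p_0}$ and $V(y)\le\nu(1+|y|^{p_0})$, the exponent in the admissibility bound is $q\gamma=1$, so the constraint reduces to $r\le\beta_2/(\kappa-1)$, which is compatible with the standing assumption $r<1/(2(\kappa-1))$ for any $\beta_2$ in the allowable range of \autoref{lem:main}. This yields $\sup_h\max_k\E|\bar X_k|^{p_0}<\infty$ for $p_0$ as large as needed.

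For (ii) I would exploit that $\bar X_{k+1}=\Pi(Y_{k+1})$ coincides with $Y_{k+1}$ on the event $\{|Y_{k+1}|\le h^{-r}\}$, while on the complement $|\bar X_{k+1}-Y_{k+1}|\le 2|Y_{k+1}|$. A combination of H\"older's and Markov's inequalities yields, for any conjugate pair $(q_1,q_2)$,
\begin{equation*}
\E|\bar X_{k+1}-Y_{k+1}|^2\le 4\bigl(\E|Y_{k+1}|^{2q_1}\bigr)^{1/q_1}\PP\bigl(|Y_{k+1}|>h^{-r}\bigr)^{1/q_2}\le C\,h^{r p_0/q_2},
\end{equation*}
with the analogous bound for the weak one-step difference $\lvert\E(\bar X_{k+1}-Y_{k+1})\rvert$. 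Since the moments of $Y_{k+1}$ inherit from those of $\bar X_k$ via \autoref{as:poly} and $p_0$ is at our disposal, both the strong-squared and the weak one-step errors can be driven to $O(h^N)$ for any prescribed $N$.

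The main obstacle is the bookkeeping: one must verify that the $p_0$ chosen in (i) is simultaneously large enough so that $r p_0/q_2$ exceeds the thresholds required by \autoref{ass:convergence}, and that the framework's one-step error estimates between the projected scheme and the continuous SDE are then obtained by combining our bound on $\bar X_{k+1}-Y_{k+1}$ with the classical rate-$1/2$ one-step Euler estimate of $Y$ against $X$. The strict inequality $r<1/(2(\kappa-1))$ precisely reflects the slack needed to balance $p_0$ and $q_2$ against the integrability ceiling; once these routine checks are in place, the abstract framework delivers the claimed $L^2$ rate $1/2$.
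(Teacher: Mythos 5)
Your proposal is correct and follows essentially the same route as the paper: reduce to the one-step framework of Appendix \ref{strongconvergence} (Theorem \ref{convergence}), obtain the required high-order moment bounds from Theorem \ref{project+tame}, and estimate the strong and weak one-step differences against the explicit Euler update by H\"older/Cauchy--Schwarz plus Markov/Chebyshev on the event $\{|Y_{k+1}|>h^{-r}\}$ where the projection acts. The only difference is presentational: the paper carries out these estimates pointwise in the starting value $x$ with explicit exponents (yielding $O(h^{3/2})$ weak and $O(h^{2})$ squared strong one-step errors, i.e.\ $\delta=1$), whereas you state unconditional bounds of arbitrarily high order and defer this bookkeeping, which is exactly how the paper resolves it.
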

This is proved in Appendix \autoref{proof_trun_conv}. We need to point out that, with almost the same level of highest bounded moments required, the projected scheme \eqref{scm21} preserves the strong convergence rate $1/2$, while the balanced scheme \eqref{scm11} only gives $L^p$ convergence rate of $\a$, which in many cases is only $1/4$ - see, e.g., the result in \cite{Sabanis2014}.

\begin{corollary}
Given tamed coefficients $b^h,\s^h$ with which the scheme \eqref{eq:TE} converges to the solution of SDE \eqref{eq:SDE} in $L^2$, the composed scheme
\begin{equation}
\bx_{k+1}=\Pi\left(\bx_k+b^h(t_k,\bx_k)h+\s^h(t_k,\bx_k)\D W_{k+1}\right),
\end{equation}
with appropriate $r$ chosen, also converges in $L^2$ with the same rate.
\end{corollary}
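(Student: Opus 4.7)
The plan is to essentially reuse the proof strategy of \autoref{trun_conv}: the projection $\Pi$ only modifies the iterate on a rare event, so its contribution to the $L^2$ error is negligible compared to the base rate. Let $\by_k$ denote the unprojected tamed scheme \eqref{eq:TE} (converging in $L^2$ to $X$ at rate $\alpha$ by hypothesis), and let $\bx_k$ denote the composed scheme, both coupled via the same Brownian increments $\Delta W_{k+1}$ and starting from the same $X_0$. Applying the triangle inequality
\[
\lev X_{t_k}-\bx_k\rev_{L^2}\le\lev X_{t_k}-\by_k\rev_{L^2}+\lev \by_k-\bx_k\rev_{L^2},
\]
it suffices to bound the second term by $O(h^\alpha)$, since the first is $O(h^\alpha)$ by assumption.

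The key observation is that $\bx_k$ and $\by_k$ coincide as long as $\by_\cdot$ has never exited the ball of radius $h^{-r}$. Indeed, by induction on $k$, the event $A_k:=\{\max_{0\le j\le k}|\by_j|\le h^{-r}\}$ is contained in $\{\bx_j=\by_j,~\forall j\le k\}$, because on $A_k$ the projection $\Pi$ is the identity at every step. Consequently,
\[
\ex|\bx_k-\by_k|^2=\ex\bigl[|\bx_k-\by_k|^2\one_{A_k^c}\bigr].
\]
I would then apply H\"older's inequality with a conjugate pair $(q,q/(q-1))$ to get
\[
\ex|\bx_k-\by_k|^2\le\bigl(\ex|\bx_k-\by_k|^{2q}\bigr)^{1/q}\,\PP(A_k^c)^{1-1/q}.
\]

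For the moment factor, $|\bx_k|\le h^{-r}$ almost surely and $\by_k$ has bounded moments of some order $p_0$ inherited from the hypotheses supporting the $L^2$ convergence of \eqref{eq:TE} (see \autoref{ass:convergence} in the Appendix); picking $2q\le p_0$ we bound $\ex|\bx_k-\by_k|^{2q}\le C$ uniformly in $h$ and $k$. For the probability factor, a maximal-moment estimate for the tamed scheme combined with Markov's inequality yields
\[
\PP(A_k^c)\le h^{rp_0}\ex\max_{0\le j\le k}|\by_j|^{p_0}\le Ch^{rp_0},
\]
so $\ex|\bx_k-\by_k|^2\le Ch^{rp_0(1-1/q)}$. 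Choosing $r$ and $p_0$ such that $rp_0(1-1/q)\ge2\alpha$ (with $r$ small enough to respect the integrability constraint $r\le\beta_2/((\kappa-1)q\gamma)$ of \autoref{project+tame}, which only tightens the upper bound on $r$ without preventing the lower bound via $p_0$) closes the argument.

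The main obstacle is the calibration step: we must simultaneously satisfy the upper bound on $r$ dictated by the $V$-integrability of the composed scheme (\autoref{project+tame}) and a lower bound on $rp_0$ forced by the convergence-rate requirement above. This is resolved by taking $p_0$ sufficiently large, which is legitimate because the hypothesis that \eqref{eq:TE} converges in $L^2$ already carries with it (via the general framework of \autoref{ass:convergence}) the availability of arbitrarily high moment bounds for the tamed scheme under enough regularity of $b,\s$ and $X_0$.
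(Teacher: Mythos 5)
Your argument takes a genuinely different route from the one the paper intends. The paper states this corollary without a separate proof because it is meant to follow by repeating the one-step analysis of \autoref{trun_conv} (Appendix \autoref{proof_trun_conv}) with $(b,\s)$ replaced by $(b^h,\s^h)$: one bounds the weak and strong one-step differences between the composed scheme and the unprojected tamed scheme via Cauchy--Schwarz and Chebyshev on the event $\{|x+b^h(x)h+\s^h(x)\D W|>h^{-r}\}$, obtaining errors of order $h^{3/2}$ and $h$ respectively, and then invokes the fundamental mean-square convergence theorem (\autoref{convergence}). You instead argue globally: couple the two schemes through the same Brownian increments, observe that they coincide until the tamed scheme exits the ball of radius $h^{-r}$, and control the discrepancy on that rare event by H\"older plus Markov. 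This is a legitimate alternative, and it has the virtue of not needing the one-step machinery at all; its price is that the rate you recover for $\lev\bx_k-\by_k\rev_{L^2}$ is $h^{rp_0(1-1/q)/2}$, so you must take $p_0$ large relative to the target rate $\a$ and the (upper-bounded) $r$, i.e.\ you quietly strengthen the hypothesis from ``converges in $L^2$'' to ``converges in $L^2$ and admits sufficiently high bounded moments.'' The paper's one-step route also needs moments growing like $1/r$, so this is comparable, but you should state the extra assumption explicitly rather than attribute it to \autoref{ass:convergence}, which only guarantees \emph{some} sufficiently large $p_0$, tied to $\ex|X_0|^{p_0}<\infty$ and to condition i) holding at level $p_0$.

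Three concrete steps need repair. First, your coincidence claim $A_k\subset\{\bx_j=\by_j,\ \forall j\le k\}$ requires $\Pi$ to be the \emph{identity} on $\{|x|\le h^{-r}\}$; the paper only imposes $|\Pi x|=|x|\wedge h^{-r}$, and its componentwise example (with the $1/\sqrt{d}$ rescaling) violates this, so restrict to the radial choice $\Pi(x)=\min\{1,h^{-r}|x|^{-1}\}x$ or add the identity-on-the-ball property as a hypothesis. Second, the bound $\ex|\bx_k-\by_k|^{2q}\le C$ does not follow from ``$|\bx_k|\le h^{-r}$ a.s.'', since $h^{-2qr}$ diverges as $h\to0$; either invoke moment bounds for the composed scheme (available because $|\Pi x|\le|x|$, so the Lyapunov recursion of \autoref{th:main} passes through the projection exactly as in \autoref{project+tame}), or keep the crude bound and absorb the resulting extra factor $h^{-2r}$ into the exponent by enlarging $p_0$. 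Third, the maximal-moment estimate $\ex\max_{0\le j\le k}|\by_j|^{p_0}\le C$ is not established anywhere in the paper, which only controls $\max_j\ex|\by_j|^{p_0}$; the cheap fix is a union bound over the $\lf T/h\rf$ steps, $\PP(A_k^c)\le Ch^{rp_0-1}$, which costs one power of $h$ and is again harmless once $p_0$ is large. With these repairs your calibration closes and the corollary follows, though under visibly stronger moment hypotheses than the one-step argument the paper has in mind.
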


\section{Asymptotic Stability of Equilibrium}\label{sec:stab}
\par
Suppose there is a unique solution to the SDE
\begin{equation}\label{sde_stab}
\td X_t=b(t,X_t)\td t+\sigma(t,X_t)\td W_t,~t\geqslant0,
\end{equation}
with drift and diffusion satisfying
\begin{equation}\label{coeff_stab}
b(t,0)=\sigma(t,0)\equiv0,~\forall t\geqslant0.
\end{equation}
In the context of stability one still needs to model the growths of $b$ and $\s$ in terms of the selected Lyapunov function in the class $V^p_\g$. But instead of $1+V$ as in the integrability discussion before, we need a different assumption than \autoref{as:poly} to model the growth conditions of $b$ and $\s$, due to \eqref{coeff_stab} and the possibility of $V$ taking the form \eqref{different_V}. More precisely,
\begin{assumption}\label{growth}
For each $V\in\mathcal{V}^p_\g$ there is a non-negative function $U\in\mathcal{C}(\RR^d),~\ker(U)=\{0\}$, s.t. $V(\cdot)\le U(\cdot)$, and that $\exists K>0,~\k_{1,2}\ge1$ s.t.
\begin{equation*}
|b(t,x)|\leqslant KU(x)^{\kappa_1\g},~\|\sigma(t,x)\|\leqslant KU(x)^{\kappa_2\g},~\forall t\geqslant0,~x\in\mathbb{R}^d.
\end{equation*}
\end{assumption}
In most cases the function $U$ can be reasonably assumed to have polynomial growth in the sense
\begin{equation*}
U(\cdot)\lesssim|\cdot|^{q_1}+|\cdot|^{q_2},
\end{equation*}
with $0<q_1\le q_2$, which gives polynomial growth for $b$ and $\s$ - see \autoref{lorenz}.

\begin{definition}
The SDE \eqref{sde_stab} is almost surely stable, if $X_t\to0$ a.s. as $t\to\infty$, regardless of the value of $X_0$. A numerical scheme $\{\bx_k\}$ of the SDE \eqref{sde_stab} is almost surely stable, if for fixed step size $h>0,~\bx_k\to0$ a.s. as $k\to\infty$, regardless of the value of $X_0$.
\end{definition}

\begin{definition}
Let $V\in\mathcal{V}^p_\g$. The SDE \eqref{sde_stab} is $V$-exponentially stable with rate $\rho$, if $\exists\rho>0$ s.t.
\begin{equation*}
\ex V(X_t)\le e^{-\rho t}\ex V(X_0),~\forall t\ge0.
\end{equation*}
A numerical scheme $\{\bx_k\}$ of the SDE \eqref{sde_stab} is $V$-exponentially stable with rate $\tro$, if for fixed time-step $h>0,~\exists\tro>0$ s.t.
\begin{equation*}
\ex V(\bx_k)\le e^{-\tro kh}\ex V(X_0),~\forall k\ge0.
\end{equation*}
\end{definition}

\begin{remark}
By Fatou's lemma, $V$-exponential stability implies almost-sure stability.
\end{remark}

First we check the conditions for stability of equilibrium on the SDE level. We first quote a simplified version of stochastic LaSalle theorem regarding the almost-sure stability of SDE \eqref{sde_stab} from \cite{Szpruch2010monotone,mao1999stochastic,shen2006improved}:

\begin{theorem}\label{sde_stability}
Suppose $b$ and $\sigma$ are locally Lipschitz in $x$ and let $V\in\mathcal{C}^2(\mathbb{R}^d)$ be non-negative. If there is a non-negative function $z\in\mathcal{C}(\mathbb{R}^d)$ s.t.
\begin{equation}
\mathcal{L}V(x)\leqslant-z(x),~\forall x\in\mathbb{R}^d,\label{LV}
\end{equation}
then almost surely we have
\begin{equation}
\overline{\lim_{t\to\infty}}V(X_t)<\infty,~\lim_{t\to\infty}z(X_t)=0,
\end{equation}
regardless of the value of $X_0$. In addition, if $\ker(z)=\{0\}$, then $X_t\to0$ a.s. as $t\to\infty$.

Moreover, as a special case when $z(\cdot)\ge\rho V(\cdot)$ for some positive constant $\rho$, then the trivial solution is $V$-exponentially stable.
\end{theorem}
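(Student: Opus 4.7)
The plan is to apply It\^{o}'s formula to $V(X_t)$ and exploit the supermartingale structure produced by the dissipation condition $\mathcal{L}V \le -z$. To handle the local martingale term without extra moment assumptions, I would first localize with the stopping times $\tau_n := \inf\{t\ge 0 : |X_t| \ge n\}$, which are well-defined by local Lipschitz continuity and the associated local well-posedness of \eqref{sde_stab}. It\^{o}'s formula on $[0, t \wedge \tau_n]$ gives
\begin{equation*}
V(X_{t\wedge\tau_n}) + \int_0^{t\wedge\tau_n} z(X_s)\,\td s \le V(X_0) + M_{t\wedge\tau_n},
\end{equation*}
where $M_t := \int_0^t \lj \nabla V(X_s), \s(s,X_s)\td W_s\rj$ is a continuous local martingale. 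Rearranging shows that $V(X_{t\wedge\tau_n}) + \int_0^{t\wedge\tau_n} z(X_s)\,\td s - M_{t\wedge\tau_n}$ is a nonnegative supermartingale, and standard nonnegative (super)martingale convergence along with the a.s. divergence $\tau_n \uparrow \infty$ (a consequence of $V \ge 0$ being a Lyapunov function and boundedness of $V(X_t)$ in expectation) yields that both $V(X_t)$ and $\int_0^t z(X_s)\,\td s$ converge a.s. to finite limits. This delivers $\overline{\lim}_{t\to\infty} V(X_t) < \infty$ a.s. and $\int_0^\infty z(X_s)\,\td s < \infty$ a.s.

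Next I would upgrade the integrability $\int_0^\infty z(X_s)\,\td s < \infty$ to the pointwise conclusion $z(X_t)\to 0$. The idea is to show that $t\mapsto z(X_t)$ is a.s.\ uniformly continuous on $[0,\infty)$: boundedness of $V(X_t)$ combined with the (implicit) coercivity of the Lyapunov function confines the trajectory to a compact set for large $t$, on which $b, \s$ are bounded by local Lipschitz continuity, so $X_t$ inherits uniformly continuous paths and hence so does $z(X_t)$ by continuity of $z$. A standard lemma then says that if a nonnegative uniformly continuous function on $[0,\infty)$ is integrable, it must tend to $0$. Once $z(X_t) \to 0$ a.s., the additional assumption $\ker(z) = \{0\}$ together with continuity of $z$ and the boundedness of the trajectory forces $X_t \to 0$ a.s., since any subsequential limit $x^*\ne 0$ would have $z(x^*) > 0$, contradicting $z(X_{t_k}) \to 0$.

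For the $V$-exponential stability in the special case $z \ge \rho V$, I would take expectations in the localized It\^{o} identity: the local martingale vanishes in expectation, yielding
\begin{equation*}
\ex V(X_{t\wedge\tau_n}) \le \ex V(X_0) - \rho \int_0^t \ex V(X_{s\wedge\tau_n})\,\td s.
\end{equation*}
Gr\"{o}nwall gives $\ex V(X_{t\wedge\tau_n}) \le e^{-\rho t}\ex V(X_0)$, and Fatou's lemma (together with $\tau_n \to \infty$ a.s.) delivers $\ex V(X_t) \le e^{-\rho t}\ex V(X_0)$.

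The main obstacle is the middle step, namely upgrading $\int_0^\infty z(X_s)\,\td s < \infty$ to $z(X_t) \to 0$: this requires genuine path-regularity information about $X_t$, which in turn depends on eventually confining the solution to a compact set. The other steps reduce to standard It\^{o}/Gr\"{o}nwall manipulations once the localization is in place.
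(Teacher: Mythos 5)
A preliminary remark: the paper does not prove this theorem at all --- it is quoted as a simplified stochastic LaSalle theorem from \cite{mao1999stochastic,shen2006improved,Szpruch2010monotone} --- so your proposal must be measured against those proofs (Mao's in particular). Your first step is essentially sound, up to a mislabel: the quantity $V(X_{t\wedge\tau_n})+\int_0^{t\wedge\tau_n}z(X_s)\,\td s-M_{t\wedge\tau_n}$ equals $V(X_0)+\int_0^{t\wedge\tau_n}(\mathcal{L}V+z)(X_s)\,\td s$, which is nonincreasing but not necessarily nonnegative; the correct object is $V(X_{t\wedge\tau_n})+\int_0^{t\wedge\tau_n}z(X_s)\,\td s$, a nonnegative local (hence true) supermartingale, and the semimartingale convergence theorem (the continuous-time analogue of Lemma \ref{doob}) then gives $\overline{\lim}_{t\to\infty}V(X_t)<\infty$ and $\int_0^\infty z(X_s)\,\td s<\infty$ a.s., exactly as in Mao. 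Likewise your exponential-stability step has a fixable flaw: the displayed Gr\"onwall inequality does not follow, because $\ex\int_0^{t\wedge\tau_n}V(X_s)\,\td s\le\int_0^t\ex V(X_{s\wedge\tau_n})\,\td s$ points the wrong way once multiplied by $-\rho$; the standard repair is to apply It\^o to $e^{\rho t}V(X_t)$, observe $\mathcal{L}\big(e^{\rho t}V\big)\le e^{\rho t}(\rho V-z)\le0$, so $e^{\rho(t\wedge\tau_n)}V(X_{t\wedge\tau_n})$ is a nonnegative supermartingale, and finish with Fatou.

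The genuine gap is the middle step, and it is the heart of the theorem. Your plan is to confine the path to a compact set, deduce that $t\mapsto X_t$ (hence $z(X_t)$) is a.s.\ uniformly continuous on $[0,\infty)$, and invoke Barbalat's lemma. This fails twice. First, coercivity of $V$ is not among the hypotheses (you call it ``implicit''), so $\sup_t V(X_t)<\infty$ does not confine the trajectory; Mao's version assumes radial unboundedness precisely for this purpose, and without it your last step also breaks, since $z(X_{t_k})\to0$ with $\ker(z)=\{0\}$ does not force $X_{t_k}\to0$ if $|X_{t_k}|\to\infty$ is possible. Second, and more fundamentally, even on a compact set with bounded coefficients, solutions of an SDE with nondegenerate diffusion are a.s.\ \emph{not} uniformly continuous on the infinite horizon: the increments $W_{n+\delta}-W_n$, $n\in\NN$, are i.i.d.\ Gaussian, so $\sup_n|W_{n+\delta}-W_n|=\infty$ a.s., and this unboundedness passes to the stochastic integral term unless $\s(X_t)\to0$ along the path --- which is essentially what you are trying to prove, so the argument is circular. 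Consequently the deterministic ``integrable $+$ uniformly continuous $\Rightarrow$ tends to $0$'' lemma cannot be applied pathwise. The known proof replaces it by a genuinely probabilistic argument: assume $\prb(\overline{\lim}_t z(X_t)>2\e)>0$, introduce the successive up- and down-crossing stopping times of $z(X_t)$ between the levels $\e$ and $2\e$, use Burkholder--Davis--Gundy/Chebyshev bounds on $\sup_{0\le s\le\delta}|X_{\tau+s}-X_\tau|$ over a fixed small window after each up-crossing together with Borel--Cantelli, and conclude that infinitely many excursions each contribute at least a fixed amount to $\int_0^\infty z(X_s)\,\td s$, contradicting its a.s.\ finiteness. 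Without this (or an equivalent) ingredient your proposal does not establish $\lim_{t\to\infty}z(X_t)=0$.
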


One can use \autoref{sde_stability} to determine whether a system is almost surely stable. In particular, mean-square stability, i.e. $V(\cdot)=|\cdot|^2$, is the most popular choice. Before introducing the stability results for tamed Euler scheme let us consider the following simple case.
\begin{example}\label{ex_power4}
The trivial solution to
\begin{equation}\label{power4}
\td X_t=-|X_t|^2X_t\td t+|X_t|^2\td W_t,~|X_0|^2<\infty~\textrm{a.s.}
\end{equation}
is almost surely stable.
\end{example}
Indeed we and calculate
\begin{equation*}
\mathcal{L}|x|^2=-2|x|^4+|x|^4=-|x|^4=:-z(x),
\end{equation*}
where $z(x)\geqslant0$ and $z(x)=0\Leftrightarrow x=0$. Note that in this case the trivial solution is not mean-square exponentially stable, but \autoref{sde_stability} still holds.

Nevertheless, the stability property of the numerical scheme of \eqref{power4} is not immediate. We may as well investigate the balanced schemes \eqref{scm11}, where
\begin{equation}
b^h(x)=\frac{b(x)}{1+G(x)h^\alpha},~\sigma^h(x)=\frac{\sigma(x)}{1+G(x)h^\alpha},~0<\alpha\leqslant1.
\end{equation}
One first notices that by \eqref{square},
\begin{equation*}
|\bx_{k+1}|^2=|\bx_k|^2+\mathcal{L}^h|\bx_k|^2h+|b^h(\bx_k)|^2h^2+M_{k+1},
\end{equation*}
where $M_{k+1}$ denotes a local martingale. We then calculate
\begin{align*}
\mathcal{L}^h|x|^2=&2\frac{x\cdot b(x)}{1+G(x)h^\alpha}+\frac{\|\sigma(x)\|^2}{(1+G(x)h^\alpha)^2}\leqslant\frac{1}{1+G(x)h^\alpha}\mathcal{L}|x|^2\\
=&-\frac{z(x)}{1+G(x)h^\alpha}.
\end{align*}
One can choose $\alpha\leqslant1$ and $G(x):=2|x|^2$, s.t.
\begin{align*}
A^h(x):=&\frac{z(x)}{1+G(x)h^\alpha}-\frac{|b(x)|^2h}{(1+G(x)h^\alpha)^2}=\frac{|x|^4}{1+2|x|^2h^\alpha}-\frac{|x|^6h}{(1+2|x|^2h^\alpha)^2}\\
\geqslant&\frac{2|x|^6h^\alpha-|x|^6h}{(1+2|x|^2h^\alpha)^2}\geqslant\frac{|x|^6h}{(1+2|x|^2h^\alpha)^2}\geqslant0,
\end{align*}
and $A^h(x)=0\Leftrightarrow x=0$. Thus we have
\begin{equation*}
|\bx_{k+1}|^2\leqslant|\bx_k|^2-A^h(\bx_k)h+M_{k+1}\leqslant|\bx_0|^2-\sum_{l=0}^kA^h(\bx_l)h+\sum_{l=0}^kM_{l+1},
\end{equation*}
from which we deduce $\lim_{l\to\infty}A^h(\bx_l)=0$ a.s. $\Rightarrow \lim_{k\to\infty}\bx_k=0$ a.s. This is due to the following lemma quoted from \cite{mao2007stochastic} (Theorem 1.3.9):
\begin{lemma}\label{doob}
Consider a non-negative stochastic process $\{V_k\}$ with representation
\begin{equation*}
V_k=V_0+A^1_k-A^2_k+M_k,
\end{equation*}
where $\{A^1_k\}$ and $\{A^2_k\}$ are almost surely non-decreasing, predictable processes with $A^1_0=A^2_0=0$, and $\{M_k\}$ is a local martingale adapted to $\{\mathcal{F}_{t_k}\}$ with $M_0=0$. Then
\begin{equation}
\{\lim_{k\to\infty}A^1_k<\infty\}\subset\{\lim_{k\to\infty}A^2_k<\infty\}\cap\{\lim_{k\to\infty} V_k<\infty~\textrm{exsits}\}~\textrm{a.s.}
\end{equation}
\end{lemma}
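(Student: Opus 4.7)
The approach is to localize by the running maximum of $A^1_k$ and exploit the non-negativity of $V_k$ together with the supermartingale convergence theorem applied to a shifted local martingale. First, for each integer $N\ge 1$, I would define the stopping time $\tau_N := \inf\{k\ge 0 : A^1_{k+1} > N\}$, with the convention $\inf\emptyset = \infty$. Because $\{A^1_k\}$ is predictable, $\{\tau_N \ge k\} = \{A^1_k \le N\} \in \F_{t_{k-1}}$, so $\tau_N$ is genuinely an $\{\F_{t_k}\}$-stopping time. By construction $A^1_{k\wedge\tau_N}\le N$ for every $k$, and
\begin{equation*}
\{\lim_k A^1_k < \infty\} = \bigcup_{N\ge 1}\{\tau_N = \infty\} \quad \text{a.s.},
\end{equation*}
so it suffices to prove that on each $\{\tau_N = \infty\}$ both $A^2_k$ and $V_k$ converge almost surely to finite limits.

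Next, I would let $\{\sigma_j\}_{j\ge 1}$ be a localizing sequence reducing $M$ to a genuine martingale, and combine the two localizations at $k\wedge\tau_N\wedge\sigma_j$. From $V_{k\wedge\tau_N\wedge\sigma_j}\ge 0$ and the decomposition one obtains
\begin{equation*}
A^2_{k\wedge\tau_N\wedge\sigma_j} \;\le\; V_0 + A^1_{k\wedge\tau_N\wedge\sigma_j} + M_{k\wedge\tau_N\wedge\sigma_j} \;\le\; V_0 + N + M_{k\wedge\tau_N\wedge\sigma_j}.
\end{equation*}
Taking expectations eliminates the martingale term and yields $\E[A^2_{k\wedge\tau_N\wedge\sigma_j}] \le \E[V_0] + N$; letting $j\to\infty$ and then $k\to\infty$ by monotone convergence (using monotonicity of $A^2$) gives $\E[A^2_{\infty\wedge\tau_N}] \le \E[V_0] + N < \infty$. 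Thus $A^2_\infty < \infty$ almost surely on $\{\tau_N = \infty\}$.

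With $A^1$ and $A^2$ both bounded on $\{\tau_N = \infty\}$, rewrite $M_{k\wedge\tau_N} = V_{k\wedge\tau_N} - V_0 - A^1_{k\wedge\tau_N} + A^2_{k\wedge\tau_N}$. This is still a local martingale, and since $V\ge 0$ and $A^1_{k\wedge\tau_N}\le N$, we have $M_{k\wedge\tau_N} \ge -V_0 - N$. Hence $Y_k := M_{k\wedge\tau_N} + V_0 + N$ is a non-negative local martingale; applying conditional Fatou along the reducing sequence $\{\sigma_j\}$ shows that it is in fact a supermartingale, so Doob's supermartingale convergence theorem delivers an almost-sure finite limit of $Y_k$. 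Combined with the convergence of $A^1_{k\wedge\tau_N}$ and $A^2_{k\wedge\tau_N}$ already established, this gives almost-sure convergence of $V_{k\wedge\tau_N}$. Taking the union over $N$ completes the proof.

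The main technical hurdle is the careful management of the two nested localizations (the predictable stopping time $\tau_N$ giving a sure bound on $A^1$, and the reducing sequence $\sigma_j$ making $M$ a true martingale) so that one may apply Fubini/monotone convergence at the correct moment to pass from bounded-horizon expectation estimates to the almost-sure bound on $A^2_\infty$. Once that obstacle is cleared, the closing argument is the textbook fact that a non-negative local martingale is a supermartingale and hence converges a.s.
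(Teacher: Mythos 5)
The paper never proves this lemma: it is quoted verbatim as Theorem 1.3.9 of Mao's book, with the remark that it is a discrete analogue of Theorem 2.6.7 in Liptser--Shiryaev. So the only benchmark is the standard argument, and that is essentially what you give: stop at the predictable level-crossing time of $A^1$, use non-negativity of $V$ to bound $A^2$ in expectation, and close with the fact that the shifted local martingale $Y_k=M_{k\wedge\tau_N}+V_0+N$ is non-negative, hence a supermartingale, hence a.s. convergent. Your use of predictability is exactly right: taking $\tau_N=\inf\{k:A^1_{k+1}>N\}$ makes $\{\tau_N\ge k\}=\{A^1_k\le N\}\in\F_{t_{k-1}}$ and gives the sure bound $A^1_{k\wedge\tau_N}\le N$ including at time $\tau_N$ itself, and the identity $\{\lim_k A^1_k<\infty\}=\bigcup_N\{\tau_N=\infty\}$ is correct. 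The double localization in $\tau_N$ and the reducing sequence $\sigma_j$, followed by monotone convergence in $j$ and then $k$, is also handled correctly.

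There is, however, one step that is not justified as written: both the bound $\E[A^2_{k\wedge\tau_N\wedge\sigma_j}]\le\E[V_0]+N$ and the claim that $Y_k$ is a (classical, integrable) supermartingale tacitly assume $\E[V_0]<\infty$. The lemma assumes no integrability of $V_0$, and in the paper's application (Theorem \ref{stability}, stated ``regardless of the value of $X_0$'') no integrability of $V(X_0)$ is available, so the bound could read $\infty\le\infty$ and the Fatou/supermartingale step would not apply. The repair is routine but should be stated: $V_0$ is $\F_{t_0}$-measurable, so introduce the events $\Omega_m=\{V_0\le m\}\in\F_{t_0}$ and multiply the decomposition by $\1_{\Omega_m}$; this preserves the local martingale property of $M$ and replaces $\E[V_0]$ by a bound of $m$ in all your estimates. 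Your argument then yields the conclusion a.s. on $\{\tau_N=\infty\}\cap\Omega_m$, and taking the union over $m$ as well as over $N$ exhausts $\{\lim_k A^1_k<\infty\}$ up to a null set, since $V_0<\infty$ a.s. With that extra localization your proof is complete and coincides with the standard proof of the cited theorem.
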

This is in fact a discrete version of Theorem 2.6.7 in \cite{liptser1989theory} for special semimartingales.

Now we investigate if the tamed Euler scheme
\begin{equation}
\bx_{k+1}=\bx_k+b^h(t_k,\bx_k)h+\sigma^h(t_k,\bx_k)\Delta W_{k+1}\label{scheme_general}
\end{equation}
preserves (almost-sure) stability in the general case.

\begin{theorem}\label{stability}
Let $V\in\hat{\mathcal{V}}^p_\gamma:=\mathcal{V}^p_\g\cap\{V^{(p+1)}\equiv0\}$ be donimated by a function $U$. Suppose there is $z^h\in\mathcal{C}(\mathbb{R}^d),~z^h(\cdot)\geqslant0$, s.t.
\begin{equation}
\mathcal{L}^hV(x)\leqslant-z^h(x),\label{Lh}
\end{equation}
and a constant $0<\mu\le1$ s.t. $\forall t\ge0,~\forall x\in\mathbb{R}^d$,
\begin{equation}\label{taming_cond1}
\levv b^h(t,x)\revv h^{1/2}\vee\lev\s^h(t,x)\rev h^{1/4}\le\mu\frac{(1+U(x))^\g z^h(x)}{1+U(x)+z^h(x)},
\end{equation}
then for $\mu<1/\sqrt{c/2+cd^{p-1}(p-2)}$, the scheme (\ref{scheme_general}) satisfies:
\begin{equation*}
\overline{\lim_{k\to\infty}}V(\bx_k)<\infty,~\lim_{k\to\infty}z^h(\bx_k)=0,~\textrm{a.s.},
\end{equation*}
which further implies $\lim_{k\to\infty}\bx_k=0$ a.s. if $\ker(z^h)=\{0\}$.

Moreover, in the particular case where $z^h(\cdot)\ge\rho V(\cdot)$ for some $\rho>0$, if $\exists\mu>0$ s.t. $\forall t\ge0,~\forall x\in\mathbb{R}^d$,
\begin{equation}\label{taming_cond2}
\levv b^h(t,x)\revv h^{1/2}\vee\lev\s^h(t,x)\rev h^{1/4}\le\mu V(x)^\g,
\end{equation}
then the scheme (\ref{scheme_general}), with $\mu<\sqrt{\rho}/\sqrt{c/2+cd^{p-1}(p-2)}$, admits $V$-exponential stability with a rate $\hat{\rho}=\hat{\rho}(\mu)\in(0,\rho)$.
\end{theorem}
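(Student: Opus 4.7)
The plan is to mimic the Taylor-expansion strategy used in the proof of Theorem \ref{th:main}. Since $V\in\hat{\mathcal V}^p_\gamma$ has $V^{(p+1)}\equiv 0$, the expansion of $V(\bx_{k+1})$ around $\bx_k$ terminates at order $p$, and after separating the term already captured by the discrete diffusion operator one writes
\[
V(\bx_{k+1}) = V(\bx_k) + \mathcal{L}^hV(\bx_k)\,h + R^hV(\bx_k) + M_{k+1},
\]
with $M_{k+1}$ a martingale difference and $R^hV(\bx_k)$ the bundle of residual Taylor terms of order $s\ge 2$. Hypothesis \eqref{Lh} supplies the negative drift $\mathcal{L}^hV(\bx_k)h\le -z^h(\bx_k)h$, so the whole task is to dominate $R^hV(\bx_k)$ by a strict fraction of $z^h(\bx_k)h$ and turn the recurrence into a supermartingale-like inequality.

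For the first (almost-sure) assertion I would plug the taming condition \eqref{taming_cond1} into the residual bounds from Theorem \ref{th:main}. Writing $\Lambda(x):=(1+U(x))^\gamma z^h(x)/(1+U(x)+z^h(x))$, each residual term of order $s=i+2j$ is controlled by $\phi_s\lev V^{(s)}(\bx_k)\rev \levv\bb_k\revv^i\lev\bsig_k\rev^{2j}h^{i+j}\le c\phi_s\mu^s(1+V(\bx_k))^{1-s\gamma}\Lambda(\bx_k)^s h^{(i+j)/2}$, with $(i+j)/2\ge 1$ on every residual term. The key algebraic step is the pointwise inequality
\[
(1+V(x))^{1-s\gamma}\Lambda(x)^s \le z^h(x),
\]
which follows because $V\le U$ and $1-s\gamma\ge 0$ give $(1+V)^{1-s\gamma}(1+U)^{s\gamma}\le 1+U$, after which $(z^h/(1+U+z^h))^{s-1}\le 1$ and $(1+U)/(1+U+z^h)\le 1$ close the estimate. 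Summing produces $R^hV(\bx_k)\le \mu^2[c/2+cd^{p-1}(p-2)]z^h(\bx_k)h$, which is strictly less than $z^h(\bx_k)h$ under the stated bound on $\mu$. The inequality $V(\bx_{k+1})\le V(\bx_k)-\kappa z^h(\bx_k)h+M_{k+1}$ with $\kappa>0$, combined with the discrete semimartingale convergence Lemma \ref{doob}, then yields a finite a.s.\ limit of $V(\bx_k)$ and $\sum_k z^h(\bx_k)h<\infty$ a.s., hence $z^h(\bx_k)\to 0$ a.s. Coercivity of $V$ makes $\{\bx_k\}$ bounded a.s., and continuity of $z^h$ with $\ker(z^h)=\{0\}$ forces every subsequential limit to be $0$, so $\bx_k\to 0$ a.s.

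For the $V$-exponential stability part the sharper taming \eqref{taming_cond2} yields $\levv \bb_k\revv^i\lev\bsig_k\rev^{2j}h^{i+j}\le \mu^s V(\bx_k)^{s\gamma}h^{(i+j)/2}$. Combined with the tight Hessian estimate $\lev V^{(s)}\rev\le c V^{1-s\gamma}$ available on the canonical subclass containing the working example $V(x)=|x|^p\in\bar{\mathcal V}^p_{1/p}$, each residual term is dominated by $c\mu^s V(\bx_k) h$, so that $R^hV(\bx_k)\le \mu^2[c/2+cd^{p-1}(p-2)]V(\bx_k)h$. Taking conditional expectation and using $z^h\ge\rho V$ gives
\[
\E_k V(\bx_{k+1})\le V(\bx_k)\bigl(1-(\rho-\mu^2[c/2+cd^{p-1}(p-2)])h\bigr),
\]
and iterating yields $\E V(\bx_k)\le e^{-\hat\rho kh}\E V(X_0)$ with $\hat\rho:=\rho-\mu^2[c/2+cd^{p-1}(p-2)]\in(0,\rho)$ whenever $\mu$ is below the stated threshold. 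The main obstacle in the plan is securing the algebraic inequality $(1+V)^{1-s\gamma}\Lambda^s\le z^h$ in the first part and the sharper $\lev V^{(s)}\rev\le cV^{1-s\gamma}$ bound in the second; once these are in hand the rest of the argument is routine bookkeeping of powers of $h$ identical to what is done in the proof of Theorem \ref{th:main}.
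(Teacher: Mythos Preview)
Your proposal is correct and follows essentially the same route as the paper: the finite Taylor expansion of $V(\bx_{k+1})$ yielding the decomposition $V(\bx_k)+\mathcal L^hV(\bx_k)h+R^hV(\bx_k)+M_{k+1}$, the algebraic reduction $(1+V)^{1-s\gamma}\Lambda^s\le z^h$ (which the paper writes in the equivalent form $\frac{1+U}{(1+(1+U)/z^h)^s}\le z^h$), and then the appeal to the discrete semimartingale convergence Lemma~\ref{doob}.

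One remark worth making: in the second part you are in fact more careful than the paper. The paper simply says ``one runs the same calculation'' and lands on $R^hV\le(\tfrac12c\mu^2+\sum_{s\ge 3}c\mu^s\phi_s)V\,h$, but as you observe, combining \eqref{taming_cond2} with the generic bound $\lev V^{(s)}\rev\le c(1+V)^{1-s\gamma}$ only gives $(1+V)^{1-s\gamma}V^{s\gamma}$, which is \emph{not} dominated by a constant times $V$ near the origin when $s\gamma<1$. Your proposal correctly identifies that the sharper estimate $\lev V^{(s)}\rev\le cV^{1-s\gamma}$ (i.e.\ membership in $\bar{\mathcal V}^p_\gamma$, cf.\ Remark~\ref{remark1} and Remark~\ref{cond_V_bar}) is what makes this step go through cleanly; the paper's proof tacitly relies on this. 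Your added remark that finiteness of $\overline{\lim}\,V(\bx_k)$ gives a.s.\ boundedness of $\{\bx_k\}$, which together with continuity of $z^h$ and $\ker(z^h)=\{0\}$ forces $\bx_k\to0$, is also a useful clarification that the paper leaves implicit.
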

\begin{proof}
The proof is almost identical to that of \autoref{th:main}. However, instead of \eqref{taylor} we have, by the estimate for the remainder \eqref{Rh},
\begin{align*}
V(\bx_{k+1})=&V(\bx_k)+\mathcal{L}^hV(\bx_k)h+R^hV(\bx_k)+M_{k+1}\\
\le&V(\bx_k)-\mathcal{L}^hV(\bx_k)h+\frac{1}{2}\lev V^{(2)}(\bx_k)\rev\levv\bb_k\revv^2h^2\\
&+\sum_{\substack{3\le i+2j\le p\\i,j\in\NN}}\phi_{i+2j}\lev V^{(i+2j)}(\bx_k)\rev\levv\bb_k\revv^i\lev\bsig_k\rev^{2j}h^{i+2j}+M_{k+1}
\end{align*}
where $M_{k+1}$ is a local martingale. Notice that all derivatives of $V$ have upper bounds as defined in \eqref{V}. Now apply \eqref{Lh} and \eqref{taming_cond1} and we get
\begin{align*}
V&(\bx_{k+1})\le V(\bx_k)-z^h(\bx_k)h+\frac{1}{2}c\mu^2(1+V(\bx_k))^{1-2\g}\left(\frac{(1+U(\bx_k))^\g z^h(\bx_k)}{1+U(\bx_k)+z^h(\bx_k)}\right)^2h\\
&\qquad\qquad+\sum_{\substack{3\le i+2j\le p\\i,j\in\NN}}\phi_{i+2j}c\mu^{i+2j}(1+V(\bx_k))^{1-(i+2j)\g}\left(\frac{(1+U(\bx_k))^\g z^h(\bx_k)}{1+U(\bx_k)+z^h(\bx_k)}\right)^{i+2j}h^\frac{i+j}{2}+M_{k+1}\\
\le&V(\bx_k)-z^h(\bx_k)h+\frac{1}{2}c\mu^2\frac{1+U(\bx_k)}{\left(1+\frac{1+U(\bx_k)}{z^h(\bx_k)}\right)^2}h+\sum_{\substack{3\le i+2j\le p\\i,j\in\NN}}\phi_{i+2j}c\mu^{i+2j}\frac{1+U(\bx_k)}{\left(1+\frac{1+U(\bx_k)}{z^h(\bx_k)}\right)^{i+2j}}h^\frac{i+j}{2}+M_{k+1}\\
\le&V(\bx_k)-z^h(\bx_k)h+\frac{1}{2}c\mu^2\frac{1+U(\bx_k)}{1+\frac{1+U(\bx_k)}{z^h(\bx_k)}}h+\sum_{\substack{3\le i+2j\le p\\i,j\in\NN}}\phi_{i+2j}c\mu^{i+2j}\frac{1+U(\bx_k)}{1+\frac{1+U(\bx_k)}{z^h(\bx_k)}}h^\frac{i+j}{2}+M_{k+1}\\
\le&V(\bx_k)-z^h(\bx_k)h+\frac{1}{2}c\mu^2z^h(\bx_k)h+\sum_{\substack{3\le i+2j\le p\\i,j\in\NN}}\phi_{i+2j}c\mu^{i+2j}z^h(\bx_k)h^\frac{i+j}{2}+M_{k+1}\\
\le&V(\bx_k)-z^h(\bx_k)h+\frac{1}{2}c\mu^2z^h(\bx_k)h+\sum_{3\le s\le p}\phi_sc\mu^sz^h(\bx_k)h+M_{k+1}.\num\label{A2}
\end{align*}
This implies that
\begin{align}
V(\bx_{k+1})\leqslant&V(X_0)-\sum_{l=0}^k\left(1-\frac{1}{2}c\mu^2-\sum_{s=3}^pc\mu^s\phi_s\right)z^h(\bx_l)h+\sum_{l=0}^kM_{l+1}\label{A1}\\
=:&V(X_0)-A_k^2+\hat{M}_k.\nonumber
\end{align}
One should then find a taming method with $\mu$ sufficiently small s.t.
\begin{equation*}
1-\frac{1}{2}c\mu^2-\sum_{s=3}^pc\mu^s\phi_s>0,
\end{equation*}
so that $A^2_k$ is increasing in $k$. Now with $A^1_k\equiv0$ and $\hat{M}_k$ a local martingale with $\hat{M}_0=0$, according to \autoref{doob}, $\lim_{k\to\infty}V(\bx_k)<\infty$, and also
\begin{equation}
\sum_{l=0}^\infty\left(1-\frac{1}{2}c\mu^2-\sum_{s=3}^pc\mu^s\phi_s\right)z^h(\bx_l)h<\infty,~\textrm{a.s.}
\end{equation}
which implies that $\lim_{k\to\infty}z^h(\bx_k)=0$ a.s. Moreover when $z^h(x)=0$ iff $x=0$ one concludes that $\lim_{k\to\infty}\bx_k=0$ a.s. In fact, assuming $\mu\le1$, by \autoref{rmk:const} one just needs to choose $\mu<1/\sqrt{c/2+cd^{p-1}(p-2)}$.

If $z^h(\cdot)\ge\rho V(\cdot)$ for some $\rho>0$, one runs the same calculation to get, instead of (\ref{A1}),
\begin{align*}
V(\bx_{k+1})\le&V(\bx_k)-\left(\rho-\frac{1}{2}c\mu^2-\sum_{s=3}^pc\mu^s\phi_s\right)V(\bx_k)h+M_{k+1}\\
=:&V(\bx_k)-\tro V(\bx_k)h+M_{k+1}.
\end{align*}
Choose $\mu$ sufficiently small s.t. $\tro>0$. Taking expectation on both sides, one arrives at
\begin{align*}
\ex V(\bx_{k+1})\leqslant&(1-\hat{\rho}h)\ex V(\bx_k)\\
\leqslant&(1-\hat{\rho}h)^{k+1}\ex V(X_0)\leqslant e^{-\hat{\rho}(k+1)h}\ex V(X_0)\to0,
\end{align*}
as $k\to\infty$. This can be done by choosing $\mu<\sqrt{\rho}/\sqrt{c/2+cd^{p-1}(p-2)}$.
\end{proof}

\begin{remark}
In analogy to \autoref{lem:main}, \autoref{stability} also holds for $V\in\mathcal{V}^p_\g$.
\end{remark}

\begin{remark}
According to the calculations \eqref{A2}, condition \eqref{taming_cond1} can be weakened to
\begin{equation}\label{combo}
\lev V^{(i+2j)}(x)\rev\levv b^h(t,x)\revv^i\lev\s^h(t,x)\rev^{2j}h^\frac{i+j}{2}\le\mu z^h(x),~\forall t\ge0,~x\in\RR^d,
\end{equation}
for $i=2,j=0$ and all $i,j\in\NN$ s.t. $3\le i+2j\le p$.
\end{remark}

\begin{remark}\label{cond_V_bar}
For $V\in\bar{\mathcal{V}}^p_\g$ condition \eqref{taming_cond1} can be simplified to
\begin{equation}\label{taming_cond3}
\levv b^h(t,x)\revv h^{1/2}\vee\lev\s^h(t,x)\rev h^{1/4}\le\mu\frac{U(x)^\g z^h(x)}{U(x)+z^h(x)},~\forall t\ge0,~x\in\RR^d,
\end{equation}
which also implies \eqref{combo} for $0<\mu\le1$.
\end{remark}
Notice that \eqref{taming_cond3} is reasonable since from \eqref{Lh} we have
\begin{align}
z^h(x)\le&\lev\nabla V(x)\rev\levv b^h(t,x)\revv+\frac{1}{2}\lev V^{(2)}(x)\rev\lev\s^h(t,x)\rev^2\nonumber\\
\le&KU(x)^{1+(\k_1-1)\g}+KU(x)^{1+2(\k_2-1)\g},\label{z_growth}
\end{align}
which ensures no singularity in the right-hand-side term in \eqref{taming_cond3}.

\subsection{Balanced Schemes}
Now one can try to find out whether a certain type of taming scheme can preserve stability. It turns out that the subspace $\bar{\mathcal{V}}^p_\g$ is the most frequently used resource when in sought of Lyapunov functions. Let us first investigate the following type of tamed schemes adopted by \cite{hutzenthaler2012strong,tretyakov2012fundamental,Sabanis2014}:
\begin{equation}
b^h(t,x)=\frac{b(t,x)}{1+G(x)h^\alpha},~\sigma^h(t,x)=\frac{\sigma(t,x)}{1+G(x)h^\alpha},\label{scm1}
\end{equation}
for some $G(\cdot)\geqslant0<\alpha\leqslant1$. Given the growth condition \eqref{z_growth}, which also holds for $z(\cdot)$, it turns out that by imposing some lower bounds on $z$ one can recover almost-sure stability for \eqref{scm1}.

\begin{proposition}\label{stab1}
Let \autoref{growth} hold for $V\in\mathcal{V}^p_\gamma$ s.t. the coefficients of the SDE \eqref{sde_stab} satisfy
\begin{equation}
\mathcal{L}V(x)\leqslant-z(x),~\forall t\geqslant0,~\forall x\in\mathbb{R}^d,
\end{equation}
for some $0\leqslant z\in\mathcal{C}(\mathbb{R}^d)$ satisfying
\begin{equation}\label{cond_z}
z(x)\geqslant\lambda(1+U(x))^{1-\g}\left(U(x)^{\k_1\g}\vee U(x)^{\k_2\g}\right),~\forall x\in\RR^d,
\end{equation}
for some $\lambda>0$. Then, by choosing $h<(\mu\lambda/K)^4$ and $G(x)=C(U(x)^{(\k_1-1)\g}\vee U(x)^{(\k_2-1)\g}),~C\ge1/\left(\mu/K-h^{1/4}/\lambda\right),~\alpha\le1/4$, the Euler scheme (\ref{scheme_general}) with tamed coefficients (\ref{scm1}) preserves almost-sure stability for the trivial solution, where $\mu$ satisfies the requirement in \autoref{stability}.
\end{proposition}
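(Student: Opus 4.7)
The plan is to verify the two hypotheses of \autoref{stability} for the balanced taming \eqref{scm1} with the prescribed $G$, $\alpha$, and $C$, and then conclude directly from that theorem.

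First I would establish condition \eqref{Lh}. Since both $b^h$ and $\sigma^h$ in \eqref{scm1} share the same denominator, the diffusion operator of the scheme splits as
\[
\mathcal{L}^hV(x) = \frac{\nabla V(x)\cdot b(t,x)}{1+G(x)h^\alpha} + \frac{1}{2}\frac{\tr[V^{(2)}(x)\sigma\sigma^\top(t,x)]}{(1+G(x)h^\alpha)^2}.
\]
Provided $\tr[V^{(2)}(x)\sigma\sigma^\top(t,x)]\ge 0$ (the standard convex setting, matching the ``case ii)'' discussion in \autoref{sec:taming_choices}), the inequality $(1+G(x)h^\alpha)^{-2}\le(1+G(x)h^\alpha)^{-1}$ gives $\mathcal{L}^hV(x)\le\mathcal{L}V(x)/(1+G(x)h^\alpha)\le -z(x)/(1+G(x)h^\alpha)=: -z^h(x)$, which is the required non-negative dominant.

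Next I would verify the taming bound \eqref{taming_cond1}. By \autoref{growth}, $|b(t,x)|h^{1/2}\le K U(x)^{\k_1\g}h^{1/2}$, and the choice $G(x)=C(U(x)^{(\k_1-1)\g}\vee U(x)^{(\k_2-1)\g})$ lets one bound the denominator from below by $C U(x)^{(\k_1-1)\g}h^\alpha$, so that $|b^h(t,x)|h^{1/2}\le (K/C)U(x)^\g h^{1/2-\alpha}$, a non-negative exponent on $h$ thanks to $\alpha\le 1/4$. The lower bound \eqref{cond_z} is crafted so that the surplus factor $U(x)^\g$ can be rewritten via $z(x)\gtrsim\lambda(1+U(x))^{1-\g}\bar G(x) U(x)^\g$, which in turn ensures that the quotient $(1+U(x))^\g z^h(x)/(1+U(x)+z^h(x))$ on the right-hand side of \eqref{taming_cond1} dominates $(K/C)U(x)^\g h^{1/2-\alpha}$ up to the constant $\mu$. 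The explicit threshold $h<(\mu\lambda/K)^4$ keeps $\mu/K-h^{1/4}/\lambda$ positive, and the lower bound $C\ge 1/(\mu/K-h^{1/4}/\lambda)$ is precisely what is needed to turn the preceding $\lesssim$ into the required $\le\mu$. An identical computation with $h^{1/4}$ in place of $h^{1/2}$ handles $\sigma^h$.

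With both hypotheses of \autoref{stability} verified and $\mu$ chosen small enough to satisfy the smallness requirement $\mu<1/\sqrt{c/2+cd^{p-1}(p-2)}$ of that theorem, almost-sure stability of the trivial solution follows; in particular $\lim_{k\to\infty}z^h(\bx_k)=0$ a.s., and since $z^h$ inherits $\ker(z^h)=\{0\}$ from $z$ (as $1+G(x)h^\alpha>0$), we get $\lim_{k\to\infty}\bx_k=0$ a.s. The hard part is the delicate matching in step two: the right-hand side of \eqref{taming_cond1} depends on $h$ through $z^h$, and one must check that the estimate holds uniformly in $x\in\RR^d$ rather than merely asymptotically for large $|x|$; tracking this across all regimes (in particular where either $U(x)\ll 1$ or $z(x)\ll 1+U(x)$) is what forces the explicit smallness condition on $h$ and the explicit lower bound on $C$.
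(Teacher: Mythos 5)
Your overall strategy (verify \eqref{Lh} and \eqref{taming_cond1} for the balanced coefficients \eqref{scm1} and then invoke \autoref{stability}) is the same as the paper's, and your first step, $\mathcal{L}^hV(x)\le\mathcal{L}V(x)/(1+G(x)h^\alpha)=:-z^h(x)$ with $\ker(z^h)=\{0\}$, matches the paper exactly. The gap is in the second step, and it is not merely an omitted computation: the specific chain of estimates you sketch cannot be completed. By bounding the denominator below by $CU(x)^{(\kappa_1-1)\gamma}h^\alpha$ (i.e.\ discarding the ``$1$'') you replace the left-hand side of \eqref{taming_cond1} by $(K/C)U(x)^{\gamma}h^{1/2-\alpha}$ (resp.\ $h^{1/4-\alpha}$ for $\sigma^h$), and then claim this is dominated by $\mu(1+U(x))^{\gamma}z^h(x)/\bigl(1+U(x)+z^h(x)\bigr)$. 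Near the origin this is false whenever $\kappa_1\wedge\kappa_2>1$: there $G(x)h^\alpha\ll1$, so $z^h(x)\approx z(x)$, and the only available information is the lower bound \eqref{cond_z}, which gives a right-hand side of order $\mu\lambda\,U(x)^{(\kappa_1\wedge\kappa_2)\gamma}$, while your upper bound for the left-hand side is of the strictly lower order $U(x)^{\gamma}$. No choice of small $h$ or large $C$ repairs this, and the regime $x\to0$ is precisely the relevant one for a stability statement, since the taming condition must persist as $\bx_k\to0$.

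The paper avoids this loss by never decoupling the two occurrences of $1+G(x)h^\alpha$: it rewrites \eqref{taming_cond1} equivalently as
\begin{equation*}
|b(t,x)|h^{1/2}\vee\|\sigma(t,x)\|h^{1/4}\;\le\;\frac{\mu(1+U(x))^{\gamma}}{\frac{1+U(x)}{z(x)}+\frac{1}{1+G(x)h^{\alpha}}},
\end{equation*}
bounds the left-hand side by $K\bigl(U(x)^{\kappa_1\gamma}\vee U(x)^{\kappa_2\gamma}\bigr)h^{1/4}$ via \autoref{growth} and uses \eqref{cond_z} to bound $\frac{1+U(x)}{z(x)}\le\frac{(1+U(x))^{\gamma}}{\lambda(U(x)^{\kappa_1\gamma}\vee U(x)^{\kappa_2\gamma})}$. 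The factor $U(x)^{\kappa_1\gamma}\vee U(x)^{\kappa_2\gamma}$ is thereby kept on both sides, and the requirement reduces to
\begin{equation*}
1+G(x)h^{\alpha}\;\ge\;\frac{K\bigl(U(x)^{\kappa_1\gamma}\vee U(x)^{\kappa_2\gamma}\bigr)}{\bigl(\mu-Kh^{1/4}/\lambda\bigr)(1+U(x))^{\gamma}}\,h^{1/4},
\end{equation*}
which the stated choices satisfy because $\bigl(U^{\kappa_1\gamma}\vee U^{\kappa_2\gamma}\bigr)/(1+U)^{\gamma}\le U^{(\kappa_1-1)\gamma}\vee U^{(\kappa_2-1)\gamma}$, $h<(\mu\lambda/K)^4$ makes $\mu-Kh^{1/4}/\lambda>0$, and $C\ge1/\bigl(\mu/K-h^{1/4}/\lambda\bigr)$. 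To fix your argument you would either have to reproduce this algebraic rearrangement, or split into regimes while retaining the ``$1$'' in the denominator near the origin; as written, your intermediate inequality is simply not true uniformly in $x$, so the assertion that the stated $h$ and $C$ are ``precisely what is needed'' is unsupported by the estimates you give.
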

\begin{proof}
First one calculates
\begin{align*}
\mathcal{L}^hV(x)=&\nabla V(x)\cdot\frac{b(t,x)}{1+G(x)h^\alpha}+\frac{1}{2(1+G(x)h^\alpha)^2}\tr\left[\nabla^2V(x)\sigma\sigma^\top(t,x)\right]\\
\leqslant&\frac{1}{1+G(x)h^\alpha}\mathcal{L}|x|^2\leqslant-\frac{z(x)}{1+G(x)h^\alpha}=:-z^h(x),\num\label{zh_G}
\end{align*}
which satisfies $z^h(x)=0\Leftrightarrow x=0$. Now one only needs to select appropriate $G(\cdot)$ and $\alpha$ s.t. condition \eqref{taming_cond1} is satisfied, i.e.,
\begin{align*}
\frac{|b(t,x)|h^\frac{1}{2}\vee\|\sigma(t,x)\|h^\frac{1}{4}}{1+G(x)h^\alpha}\leqslant&\mu\frac{(1+U(x))^\gamma}{1+U(x)+\frac{z(x)}{1+G(x)h^\alpha}}\frac{z(x)}{1+G(x)h^\alpha}\\
\Leftrightarrow~|b(t,x)|h^\frac{1}{2}\vee\|\sigma(t,x)\|h^\frac{1}{4}\leqslant&\frac{\mu (1+U(x))^\gamma}{\frac{1+U(x)}{z(x)}+\frac{1}{1+G(x)h^\alpha}},
\end{align*}
where the left-hand-side above, by \autoref{growth}, has upper bound $K\left(U(x)^{\k_1\g}\vee U(x)^{\k_2\g}\right)h^{1/4}$, while the right-hand-side, by \eqref{cond_z}, has lower bound $\mu(1+U(x))^\gamma/\left(\frac{(1+U(x))^\gamma}{\lambda\left(U(x)^{\k_1\g}\vee U(x)^{\k_2\g}\right)}+\frac{1}{1+G(x)h^\alpha}\right)$.
Hence one can require
\begin{align*}
&K\left(U(x)^{\k_1\g}\vee U(x)^{\k_2\g}\right)h^{1/4}\leqslant\mu(1+U(x))^\gamma/\left(\frac{(1+U(x))^\gamma}{\lambda\left(U(x)^{\k_1\g}\vee U(x)^{\k_2\g}\right)}+\frac{1}{1+G(x)h^\alpha}\right)\\
&\Leftrightarrow~\mu(1+U(x))^\gamma\geqslant\frac{K}{\lambda}h^{1/4}(1+U(x))^\g+\frac{K\left(U(x)^{\k_1\g}\vee U(x)^{\k_2\g}\right)}{1+G(x)h^\alpha}h^{1/4}\\
&\Leftrightarrow~1+G(x)h^\alpha\geqslant\frac{K\left(U(x)^{\k_1\g}\vee U(x)^{\k_2\g}\right)}{(\mu-Kh^{1/4}/\lambda)(1+U(x))^\gamma}h^{1/4},
\end{align*}
where for fixed $\mu\le1$ we choose $h\leqslant h_0<(\mu\lambda/K)^4$. Thus by choosing $\alpha=1/4$ and $G(x):=C\left(U(x)^{(\k_1-1)\g}\vee U(x)^{(\k_2-1)\g}\right)$, the taming condition \eqref{taming_cond3} is satisfied for $\mu\ge K\left(1/C+h^{1/4}/\lambda\right)$. Hence by \autoref{cond_V_bar} and \autoref{stability}, the scheme \eqref{scm1} is almost surely stable when $C$ and $h$ are chosen sufficiently large and small, respectively.
\end{proof}

When $U(\cdot)=|\cdot|^{q_1}+|\cdot|^{q_2},~0<q_1\le q_2$, one sees $U(\cdot)^{\k_1\g}\vee U(\cdot)^{\k_2\g}=|\cdot|^{(\k_1\wedge\k_2)q_1\g}+|\cdot|^{(\k_1\vee\k_2)q_2\g}$.

\begin{corollary}
In the special case where $V(\cdot)=|\cdot|^p$ and $z(x)\gtrsim|x|^{\kappa_1+p-1}+|x|^{\k_2+p-1}$, one just needs to choose $\alpha=1/4$ and $G(x):=C(|x|^{\kappa_1-1}+|x|^{\kappa_2-1})$ with $C$ sufficiently large.
\end{corollary}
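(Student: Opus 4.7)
The plan is to apply \autoref{stab1} directly with a concrete choice of the dominating function $U$ and the Lyapunov parameter $\g$, and then check that the hypotheses on $z$ and the stated form of $G$ match the general formulas \eqref{cond_z} and \eqref{zh_G}. First I would identify $V(\cdot)=|\cdot|^{p}$ as an element of $\bar{\mathcal V}^{p}_{1/p}$ (for $p$ even, as in \autoref{remark1}), so the relevant exponent is $\g=1/p$. Then the natural dominating function is $U(\cdot)=|\cdot|^{p}$, which trivially satisfies $V\le U$, so that \autoref{growth} translates into the polynomial bounds $|b(t,x)|\lesssim |x|^{\k_1}$ and $\|\s(t,x)\|\lesssim|x|^{\k_2}$ as usual.

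Next I would verify that the standing lower bound $z(x)\gtrsim|x|^{\k_1+p-1}+|x|^{\k_2+p-1}$ implies condition \eqref{cond_z} with this choice of $U$ and $\g$. With $U(x)=|x|^p$ and $\g=1/p$ one has $U(x)^{\k_i\g}=|x|^{\k_i}$, hence
\[
(1+U(x))^{1-\g}\bigl(U(x)^{\k_1\g}\vee U(x)^{\k_2\g}\bigr)
\;=\;(1+|x|^{p})^{(p-1)/p}\bigl(|x|^{\k_1}\vee|x|^{\k_2}\bigr),
\]
which, using $(1+|x|^{p})^{(p-1)/p}\lesssim 1+|x|^{p-1}$ and the elementary equivalence $a\vee b\asymp a+b$ for $a,b\ge 0$, is comparable to $|x|^{\k_1+p-1}+|x|^{\k_2+p-1}$ away from the origin and bounded near the origin. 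Thus, up to adjusting $\lambda$ and possibly introducing an additive constant absorbed into the $1+U$ factor, the hypothesis $z(x)\gtrsim|x|^{\k_1+p-1}+|x|^{\k_2+p-1}$ yields \eqref{cond_z}.

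It then remains to translate the prescription for $G$ from \autoref{stab1}. The proposition requires $G(x)=C\bigl(U(x)^{(\k_1-1)\g}\vee U(x)^{(\k_2-1)\g}\bigr)$, which in our case becomes $C(|x|^{\k_1-1}\vee|x|^{\k_2-1})$, and since $a\vee b\asymp a+b$ this is equivalent, up to redefining $C$, to the stated $G(x)=C(|x|^{\k_1-1}+|x|^{\k_2-1})$. Once $C$ is taken large enough (and the step $h$ small enough) so that the constant constraint $C\ge 1/(\mu/K-h^{1/4}/\lambda)$ from \autoref{stab1} is met, the taming condition \eqref{taming_cond3} of \autoref{cond_V_bar} is satisfied, and \autoref{stability} produces almost-sure stability of the trivial solution for the resulting explicit scheme.

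The only non-routine point is the matching of the lower bound on $z$: the assumption is purely polynomial, whereas \eqref{cond_z} involves the mixed expression $(1+U)^{1-\g}$, so one has to be a bit careful at the origin and with constants. Beyond this bookkeeping, the statement is immediate from \autoref{stab1} with the indicated choices.
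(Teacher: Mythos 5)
Your overall strategy (specialize \autoref{stab1} with $U=V=|\cdot|^p$, $\g=1/p$, and match $G$) is the natural one, but the step where you deduce \eqref{cond_z} from the hypothesis $z(x)\gtrsim|x|^{\kappa_1+p-1}+|x|^{\k_2+p-1}$ is a genuine gap, not bookkeeping. Near the origin the right-hand side of \eqref{cond_z} behaves like $\lambda\,|x|^{\k_1\wedge\k_2}$, because $(1+U(x))^{1-\g}\approx 1$ there, whereas your assumed lower bound on $z$ behaves like $|x|^{(\k_1\wedge\k_2)+p-1}$, which is strictly smaller for small $|x|$ (the ratio is of order $|x|^{p-1}\to0$). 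Taking $z$ exactly equal to $|x|^{\kappa_1+p-1}+|x|^{\k_2+p-1}$ shows that \eqref{cond_z} fails for every $\lambda>0$, so \autoref{stab1} cannot be invoked verbatim; the same near-origin failure of \eqref{taming_cond1} is already visible in the paper's own \autoref{ex_power4}, where $|b^h(x)|h^{1/2}\sim|x|^3h^{1/2}$ cannot be dominated by $\mu z(x)\sim\mu|x|^4$ as $x\to0$. Adjusting $\lambda$ or absorbing constants cannot repair this, since both sides vanish at $0$ at different rates.

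What rescues the corollary is precisely the refinement you did not use: for $p$ even, $|\cdot|^p\in\bar{\mathcal{V}}^p_{1/p}$, so by \autoref{cond_V_bar} the taming condition can be taken in the form \eqref{taming_cond3}, with $1+U$ replaced by $U$ (equivalently one can check the weakened condition \eqref{combo}, since $\|V^{(s)}(x)\|\le cV(x)^{1-s\g}$ without the additive $1$). Rerunning the proof of \autoref{stab1} with $(1+U)^\g$ replaced by $U^\g$ turns the required lower bound on $z$ into $z(x)\gtrsim U(x)^{1-\g}\bigl(U(x)^{\k_1\g}\vee U(x)^{\k_2\g}\bigr)=|x|^{p-1}\bigl(|x|^{\k_1}\vee|x|^{\k_2}\bigr)\asymp|x|^{\k_1+p-1}+|x|^{\k_2+p-1}$, which is exactly the corollary's hypothesis; with this modification your choice $\alpha=1/4$, $G(x)=C(|x|^{\k_1-1}+|x|^{\k_2-1})$ and $C$ large goes through as you describe (the $\vee$ versus $+$ discrepancy in $G$ is indeed harmless). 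So the missing idea is the passage to the $\bar{\mathcal{V}}^p_{1/p}$ version of the taming condition; without it the deduction of \eqref{cond_z} is false near $x=0$.
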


\subsection{Projected Schemes}
In general there is not evident clue that the balanced scheme \eqref{scm1} can preserve moment-exponential stability. However, one may project it onto a bounded range:
\begin{equation}\label{scm3}
\bx_{k+1}=\Pi\left(\bx_k+b^h(t_k,\bx_k)h+\s^h(t_k,\bx_k)\D W_{k+1}\right),
\end{equation}
where $\Pi:\RR^d\to\RR^d$ is such that $|\Pi x|=|x|\wedge h^{-r}$ for some $r>0,~\forall x\in\RR^d$, and $b^h,\s^h$ are as in \eqref{scm1}. By adopting this scheme one can immediately have $z^h$ in \eqref{zh_G} replaced by just $z$ itself (with scaling):
\begin{align*}
z^h(x)=&\frac{z(x)}{1+G(x)h^\a}=\frac{z(x)}{1+C|x|^{\k^\ast}h^\a}\\
\ge&\frac{z(x)}{1+Ch^{\a-rq\k^\ast}}\ge\frac{1}{1+C}z(x),~\forall x\in\RR^d,
\end{align*}
by choosing $r<\a/(q\k^\ast)$, where $G(\cdot)$ is, for instance as in \autoref{ex_power4}, chosen to be $C|\cdot|^{\k^\ast}$ for some $C,\k^\ast>0$. This motivates the idea that \eqref{scm3} can serve as a remedy to the shortcoming of the balanced scheme \eqref{scm1}. Indeed, when $z(\cdot)\ge\rho V(\cdot)$, for the balance scheme one has
\begin{equation*}
\mathcal{L}^hV(x)\le-\rho\frac{V(x)}{1+G(x)h^\a},
\end{equation*}
where one sees that $z^h(\cdot)\gtrsim V(\cdot)$ is violated due to the unboundedness of $G(\cdot)$. However, this can be avoided by using projection \eqref{scm3}.
\begin{proposition}
Let \autoref{growth} hold with $U(\cdot)\le\nu(1+|\cdot|^q),~\nu,q>0$ and $V$ satisfying
\begin{equation}\label{V_increase}
V(x)\le V(y),~\forall x,y\in\RR^d,~|x|\le|y|.
\end{equation}
Suppose $\exists\rho>0$ s.t.
\begin{equation*}
\mathcal{L}V(x)\le-\rho V(x),~\forall x\in\RR^d.
\end{equation*}
Then, with $G(x):=C(1+|x|^{(\ck-1)q\g}),~C\ge K\nu^{(\ck-1)\g}/\mu,~\alpha\le1/4,~r<\a/((\ck-1)q\g)$, the scheme \eqref{scm3} is $V$-exponentially stable, where $\ck=\k_1\vee\k_2$ and $\mu$ satisfies the requirement in \autoref{stability}.
\end{proposition}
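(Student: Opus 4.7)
The plan is to verify the hypotheses of \autoref{stability} (in the $V$-exponential case, $z^h(\cdot)\ge\tro V(\cdot)$) for the projected balanced scheme, and then invoke that theorem directly. The projection plays the essential role of turning the $h$-dependent denominator of $\mathcal{L}^h V$ into a uniformly bounded factor on the state space actually visited by the scheme.

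After a single application of $\Pi$ we have $|\bx_k|\le h^{-r}$ for all $k\ge1$ (projecting $X_0$ once if necessary), so monotonicity \eqref{V_increase} gives
\[
V(\bx_{k+1})\le V\bigl(\bx_k+b^h(t_k,\bx_k)h+\s^h(t_k,\bx_k)\D W_{k+1}\bigr),
\]
and the Taylor expansion carried out in the proof of \autoref{stability} applies to the right-hand side with the balanced coefficients $b^h,\s^h$ from \eqref{scm1}, producing the remainder structure \eqref{Rh}. For the drift operator, \eqref{zh_G} yields $\mathcal{L}^hV(x)\le-\rho V(x)/(1+G(x)h^\a)$; on the ball $\{|x|\le h^{-r}\}$, the constraint $r<\a/((\ck-1)q\g)$ ensures
\[
G(x)h^\a=Ch^\a+C|x|^{(\ck-1)q\g}h^\a\le Ch^\a+Ch^{\a-r(\ck-1)q\g}\le K_0
\]
for all $h\in(0,1]$ and some $K_0$ independent of $h$. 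Hence $\mathcal{L}^hV(\bx_k)\le-\tro V(\bx_k)$ with $\tro:=\rho/(1+K_0)>0$, providing the premise $z^h\ge\tro V$.

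For the taming inequality, \autoref{growth} together with $U(x)\le\nu(1+|x|^q)$ and $\k_1,\k_2\le\ck$ lets me estimate
\[
\levv b^h(t,x)\revv h^{1/2}\vee\lev\s^h(t,x)\rev h^{1/4}\le\frac{K\nu^{\k_1\g}(1+|x|^q)^{\k_1\g}\,h^{1/2}}{1+C(1+|x|^{(\ck-1)q\g})h^\a}\le\frac{K\nu^{(\ck-1)\g}}{C}\bigl(1+V(x)\bigr)^\g,
\]
where $\a\le 1/4$ unifies the exponents for $b^h$ and $\s^h$, and the polynomial quotient in $|x|$ is handled by pairing $(1+|x|^q)^{\k_i\g}$ with the $|x|^{(\ck-1)q\g}$ factor in the denominator. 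The choice $C\ge K\nu^{(\ck-1)\g}/\mu$ then delivers the bound $\mu(1+V(x))^\g$, which, combined with $z^h\ge\tro V$, is sufficient (by the remark in analogy to \autoref{lem:main} extending \autoref{stability} to $V\in\mathcal{V}^p_\g$) for the remainder estimate \eqref{A2} to give $\ex_kV(\bx_{k+1})\le(1-\hro h)V(\bx_k)$ for some $\hro\in(0,\tro)$.

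Iterating this one-step contraction produces $\ex V(\bx_k)\le e^{-\hro kh}\ex V(X_0)$, i.e.\ the asserted exponential $V$-stability. The main obstacle I expect is the simultaneous compatibility of the four constraints $\a\le 1/4$, $r<\a/((\ck-1)q\g)$, $C\ge K\nu^{(\ck-1)\g}/\mu$, and $\mu$ below the threshold of \autoref{rmk:const}: the last constraint forces $C$ large, the second forces $r$ small, while the first fixes the balance between $b^h$ and $\s^h$. All four are consistent (and independent of $h$), which is what makes $\hro$ genuinely positive and $h$-uniform.
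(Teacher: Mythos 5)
You follow the paper's route: use the projection together with the monotonicity \eqref{V_increase} to reduce to the unprojected balanced step on $\{|x|\le h^{-r}\}$, observe that there $G(x)h^\a\le Ch^\a+Ch^{\a-r(\ck-1)q\g}\le 2C$ for $h\le1$ by the choice $r<\a/((\ck-1)q\g)$, hence $\mathcal{L}^hV(x)\le-\tro V(x)$ with $\tro>0$ uniformly in $h$, and then appeal to \autoref{stability}. The genuine gap is in the second hypothesis: the $V$-exponential part of \autoref{stability} requires \eqref{taming_cond2}, i.e. $\levv b^h(t,x)\revv h^{1/2}\vee\lev\s^h(t,x)\rev h^{1/4}\le\mu V(x)^\g$ with no additive constant, whereas you only derive the bound $\mu(1+V(x))^\g$, which is the integrability-type condition \eqref{eq:b_coeff}. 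The difference is not cosmetic: with $(1+V)^\g$ the remainder satisfies only $R^hV(\bx_k)\lesssim\mu^2(1+V(\bx_k))h$, so the one-step inequality becomes $\ex_kV(\bx_{k+1})\le(1-\hro h)V(\bx_k)+c\mu^2h$ for some $c>0$, and iterating yields only $\limsup_{k\to\infty}\ex V(\bx_k)\lesssim c\mu^2/\hro$, not $\ex V(\bx_k)\le e^{-\hro kh}\ex V(X_0)\to0$. The additive $1$ is exactly what must be excluded near the equilibrium, where $b(t,0)=\s(t,0)=0$ but $1+V(0)=1$; the contraction $\ex_kV(\bx_{k+1})\le(1-\hro h)V(\bx_k)$ you assert therefore does not follow from the estimate you actually prove.

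To close the gap you must verify \eqref{taming_cond2} itself, which is what the paper's proof does: since $\a\le1/4$ and $h\le1$, it suffices that $K\big(U(x)^{\k_1\g}\vee U(x)^{\k_2\g}\big)h^{1/4}\le\mu V(x)^\g G(x)h^\a$; factor $U^{\k_i\g}=U^\g\,U^{(\k_i-1)\g}$, bound $U^{(\k_i-1)\g}\lesssim\nu^{(\ck-1)\g}\big(1+|x|^{(\ck-1)q\g}\big)$ using $U\le\nu(1+|\cdot|^q)$, absorb that bracket into $G(x)=C\big(1+|x|^{(\ck-1)q\g}\big)$ with $C\ge K\nu^{(\ck-1)\g}/\mu$, and compare the leftover factor $U(x)^\g$ with $V(x)^\g$ (the paper invokes the relation between $U$ and $V$ from \autoref{growth} at precisely this point) — this is the step your pairing with $(1+|x|^q)^{\k_1\g}$ silently replaces by the weaker $(1+V)^\g$. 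Two smaller slips: your displayed estimate uses $h^{1/2}$ and the exponent $\k_1\g$ for both coefficients, while the diffusion term carries $h^{1/4}$ and $\k_2\g$; and the drift-operator bound should retain the factor $\rho$, so $\tro=\rho/(1+K_0)$ as you write is the correct form. Your closing discussion of the compatibility of the constraints on $\a,r,C,\mu$ is fine and matches the paper's choices.
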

\begin{proof}
Notice that by the same argument as in the proof of \autoref{project+tame}, we treat $\mathcal{L}^h_{b^h,\s^h}=\mathcal{L}_{b^h,\s^h}$, and $b^h,\s^h$ in \autoref{stability} are just as in \eqref{scm1}. We first verify condition \eqref{taming_cond2}:
\begin{align*}
&\frac{\levv b(t,x)\revv h^{1/2}\vee\lev\s(t,x)\rev h^{1/4}}{1+G(x)h^\a}\le\mu V(x)^\g\\
\Leftarrow~&K(U(x)^{\k_1\g}\vee U(x)^{\k_2\g})h^{1/4}\le\mu V(x)^\g G(x)h^\a,
\end{align*}
which is achieved by choosing $\a\le1/4,~G(x):=C(1+|x|^{(\ck-1)q\g}),~C\ge K\nu^{(\ck-1)\g}/\mu$, due to $V\le U$ and the polynomial growth of $U$, assuming $\nu\ge1$ without loss of generality. Also for $x\in\{x:~|x|\le h^{-r}\}$, we have $G(x)\le C+Ch^{-r(\ck-1)q\g}$, and thus
\begin{equation*}
\mathcal{L}^hV(x)\le-\frac{\rho}{1+G(x)h^\a}V(x)\le-\frac{1}{1+Ch^\a+Ch^{\a-r(\ck-1)q\g}}V(x)=:-\tro V(x),
\end{equation*}
for $\tro>0$ if we choose $r<\a/((\ck-1)q\g)$. Note that there is no restriction on the step size $h$.
\end{proof}

In fact, one can show that a projected standard Euler scheme - with the original drift and diffusion inside:
\begin{equation}\label{scm2}
\bx_{k+1}=\Pi\left(\bx_k+b(t_k,\bx_k)h+\s(t_k,\bx_k)\D W_{k+1}\right),
\end{equation}
is enough to inherit $V$-exponential stability under feasible conditions. This has been introduced earlier in \eqref{scm21}, which by \autoref{trun_conv} is well-defined.

\begin{proposition}\label{stab2}
Let \autoref{growth} hold with $U=V$ satisfying \eqref{V_increase} and $V(\cdot)\le\nu(1+|\cdot|^q)$ for some $\nu,q>0$. If $\exists\rho>0$ s.t.
\begin{equation}
\mathcal{L}V(x)\leqslant-\rho V(x),~\forall x\in\mathbb{R}^d,
\end{equation}
then with $r<1/(4(\ck-1)q\g),~h<(\mu/(2K\nu^{(\ck-1)\g}))^\be$, the tamed Euler scheme \eqref{scm2} preserves $V$-exponential stability, where $\be=1/4-r(\ck-1)q\g$ and $\mu$ satisfies the requirement in \autoref{stability}.
\end{proposition}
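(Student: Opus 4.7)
The plan is to reduce this to a direct application of \autoref{stability} by exploiting the projection. Because $\Pi$ ensures $|\bx_k|\le h^{-r}$ at every iteration, the coefficients $b,\s$ evaluated at iterates of the scheme are effectively bounded in a way that mimics taming. The role of the projection then mirrors that of $b^h,\s^h$ in \autoref{stability}, and the hypothesis $\mathcal{L}V\le-\rho V$ plays the role of $\mathcal{L}^hV\le -z^h$ with $z^h=\rho V$.

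First I would observe, as in the proof of \autoref{project+tame}, that the assumption $V(x)\le V(y)$ whenever $|x|\le|y|$ together with $|\Pi y|\le|y|$ yields
\begin{equation*}
V(\bx_{k+1})=V\bigl(\Pi(\bx_k+b(t_k,\bx_k)h+\s(t_k,\bx_k)\D W_{k+1})\bigr)\le V(\bx_k+b(t_k,\bx_k)h+\s(t_k,\bx_k)\D W_{k+1}).
\end{equation*}
Taking conditional expectation $\E_k$ and applying the Taylor expansion carried out in the proof of \autoref{th:main} (equivalently \autoref{stability}) to the right-hand side, I obtain, with $\bb_k:=b(t_k,\bx_k)$ and $\bsig_k:=\s(t_k,\bx_k)$,
\begin{equation*}
\E_k V(\bx_{k+1})\le V(\bx_k)+\mathcal{L}V(\bx_k)h+R^hV(\bx_k),
\end{equation*}
where $R^hV(\bx_k)$ is bounded by the expression in \eqref{Rh} written with $\bb_k,\bsig_k$. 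Hence it suffices to verify, on the truncated set $\{|x|\le h^{-r}\}$, that the untamed coefficients satisfy the analog of \eqref{taming_cond2}, namely
\begin{equation*}
|b(t,x)|h^{1/2}\vee\|\s(t,x)\|h^{1/4}\le\mu V(x)^\g.
\end{equation*}

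Next I would perform the estimate that determines the admissible $r$ and $h$. By \autoref{growth} (with $U=V$) and the assumption $V(x)\le\nu(1+|x|^q)$, one has, on $\{|x|\le h^{-r}\}$ and $h\le 1$,
\begin{equation*}
|b(t,x)|h^{1/2}\vee\|\s(t,x)\|h^{1/4}\le K V(x)^{\ck\g}h^{1/4}=K V(x)^{(\ck-1)\g}h^{1/4}\cdot V(x)^\g,
\end{equation*}
and using $V(x)\le\nu(1+|x|^q)\le 2\nu h^{-rq}$ gives
\begin{equation*}
K V(x)^{(\ck-1)\g}h^{1/4}\le K(2\nu)^{(\ck-1)\g} h^{1/4-r(\ck-1)q\g}=K(2\nu)^{(\ck-1)\g}h^{\be}.
\end{equation*}
Requiring $r<1/(4(\ck-1)q\g)$ makes $\be>0$, so that $h<(\mu/(2K\nu^{(\ck-1)\g}))^{\be}$ (after absorbing the factor $2^{(\ck-1)\g}\ge 1$ into the constant as needed) yields the taming-type bound with the prescribed $\mu$.

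Finally, with this bound in hand and with $z^h(\cdot):=\rho V(\cdot)\ge\rho V(\cdot)$, the second half of \autoref{stability} applies verbatim: the remainder terms $R^hV$ are controlled by a constant multiple of $\mu^2 V(\bx_k)h$ plus higher-order terms in $\mu$, so that
\begin{equation*}
\E V(\bx_{k+1})\le\bigl(1-\tro h\bigr)\E V(\bx_k),\qquad \tro:=\rho-\tfrac{1}{2}c\mu^2-\sum_{s=3}^pc\mu^s\phi_s>0,
\end{equation*}
provided $\mu<\sqrt{\rho}/\sqrt{c/2+cd^{p-1}(p-2)}$, which is exactly the constraint stated. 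Iterating gives $\E V(\bx_k)\le e^{-\tro kh}\E V(X_0)$. The main obstacle, and the only delicate point, is the balance between the projection exponent $r$ (which should be large enough to control $V(\bx_k)$ for the Taylor-remainder estimate) and the polynomial growth exponent $q$: the condition $r<1/(4(\ck-1)q\g)$ is exactly what makes $\be$ strictly positive, so that a sufficiently small step size $h$ can absorb the factor $h^{\be}$ into the prescribed $\mu$.
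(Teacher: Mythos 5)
Your proposal is correct and follows essentially the same route as the paper: you use the monotonicity of $V$ together with $|\Pi y|\le|y|$ to discard the projection, reduce everything to checking the taming condition \eqref{taming_cond2} for the original $b,\s$ on $\{|x|\le h^{-r}\}$, and then invoke the exponential-stability part of \autoref{stability} with $z^h=\rho V$, exactly as the paper does. The only cosmetic difference is your intermediate bound $|b|h^{1/2}\vee\|\s\|h^{1/4}\le K V(x)^{\ck\g}h^{1/4}$, which when $\k_1\neq\k_2$ strictly requires $V\ge1$ (or replacing $V^{(\k_i-1)\g}$ by $1\vee V^{(\ck-1)\g}$); the paper avoids this by keeping the two exponents separate and carrying the harmless factor $1+|x|^{(\k_i-1)q\g}$, and your final estimate via $V(x)\le 2\nu h^{-rq}$ survives this correction unchanged.
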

\begin{proof}
As shown in \eqref{rem1} condition \eqref{Lh} is redundant and one only needs to verify condition \eqref{taming_cond2} for $b$ and $\s$, i.e.
\begin{equation}\label{con_stab2}
\levv b(t,x)\revv h^{1/2}\vee\lev\s(t,x)\rev h^{1/4}\le\mu V(x)^\g,~\forall t,x.
\end{equation}
The left-hand-side term has upper bound $K\left(V(x)^{\k_1\g}h^{1/2}\right)\vee\left(V(x)^{\k_2\g}h^{1/4}\right)$, and for scheme \eqref{scm2} we know $|\bx_k|\le h^{-r}$. Since $V(\cdot)\le\nu\left(1+|\cdot|^q\right)$, one can require
\begin{align*}
&\mu V(x)^\g\ge KV(x)^\g\left(V(x)^{(\k_1-1)\g}h^{1/2}\right)\vee\left(V(x)^{(\k_2-1)\g}h^{1/4}\right)\\
\Leftarrow~&\mu\ge K\nu^{(\ck-1)\g}\left(1+|x|^{(\k_1-1)q\g}\right)h^{1/2}\vee\left(1+|x|^{(\k_2-1)q\g}\right)h^{1/4}\\
\Leftarrow~&\mu\ge2K\nu^{(\ck-1)\g}\left(h^{1/2-r(\k_1-1)q\g}\vee h^{1/4-r(\k_2-1)q\g}\right)\\
\Leftarrow~&\mu\ge2K\nu^{(\ck-1)\g}h^\beta.\num\label{beta}
\end{align*}
Note that one can immediately let inequality \eqref{beta} hold by choosing
\begin{equation}\label{r}
r<\frac{1}{2(\k_1-1)q\g}\wedge\frac{1}{4(\k_2-1)q\g},~h<h_0\le\left(\frac{\mu}{2K\nu^{(\ck-1)\g}}\right)^{1/\beta},
\end{equation}
for fixed $\mu$. Therefore, the scheme \eqref{scm2} preserves $V$-exponential stability when such $r$ is chosen and $h$ is sufficiently small.
\end{proof}
Moment exponential stability immediately follows when $V(\cdot)=U(\cdot)=|\cdot|^p,~q=p=1/\g$.

On the other hand, scheme \eqref{scm2}, as expected, also admits almost-sure stability given same conditions as for scheme \eqref{scm1}.

\begin{proposition}
Let \autoref{growth} hold with $V\in\mathcal{V}^p_\g$ satisfying \eqref{V_increase}. Suppose $\exists0\le z\in\mathcal{C}(\RR^d)$ satisfying \eqref{cond_z}, s.t.
\begin{equation*}
\mathcal{L}V(x)\leqslant-z(x),~\forall x\in\mathbb{R}^d.
\end{equation*}
If $\exists\nu,q>0$ s.t. $U(\cdot)\le\nu(1+|\cdot|^q)$, then with $r<(4(\ck-1)q\g)^{-1},~h<\left(\mu\lambda/(K+2\lambda K\nu^{(\ck-1)\g})\right)^{1/\beta}$, the scheme (\ref{scm2}) is almost-surely stable, where $\beta=1/4-r(\ck-1)q\g$ and $\mu$ satisfies the requirement in \autoref{stability}.
\end{proposition}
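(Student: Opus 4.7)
The plan is to combine the argument for \autoref{stab2} (where projection lets us treat the standard Euler drift and diffusion as if they were already tamed, removing the need to verify \eqref{Lh}) with that of \autoref{stab1} (where condition \eqref{cond_z} on the lower bound of $z$ is used to verify the fine taming inequality \eqref{taming_cond1}). Almost-sure stability then follows from \autoref{stability}.

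First I would note that, exactly as in \eqref{rem1}, the monotonicity \eqref{V_increase} together with $|\Pi y|\le|y|$ gives $V(\bx_{k+1})\le V(\bx_k+b(t_k,\bx_k)h+\s(t_k,\bx_k)\D W_{k+1})$, so Taylor expansion yields
\[
V(\bx_{k+1})\le V(\bx_k)+\mathcal{L}V(\bx_k)h+R^hV(\bx_k)+M_{k+1},
\]
with $M_{k+1}$ a local martingale increment. Hence \autoref{stability} applies under the identifications $b^h=b$, $\s^h=\s$, $\mathcal{L}^h=\mathcal{L}$, $z^h=z$, and condition \eqref{Lh} is automatic from the standing hypothesis $\mathcal{L}V\le-z$. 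The only remaining task is to verify the taming inequality \eqref{taming_cond1} on the projected range $\{|x|\le h^{-r}\}$, which contains all iterates after the first projection.

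Next I would bound the left-hand side of \eqref{taming_cond1} via \autoref{growth} by $K(U(x)^{\k_1\g}\vee U(x)^{\k_2\g})h^{1/4}$, and lower-bound the right-hand side using \eqref{cond_z}. After clearing the denominator $1+U(x)+z(x)$, the inequality becomes
\[
K\bigl(U(x)^{\k_1\g}\vee U(x)^{\k_2\g}\bigr)h^{1/4}\bigl(1+U(x)+z(x)\bigr)\le\mu(1+U(x))^\g z(x).
\]
I would split the left side into a piece proportional to $1+U(x)$ and a piece proportional to $z(x)$. Using \eqref{cond_z} to cancel the factor $(U(x)^{\k_1\g}\vee U(x)^{\k_2\g})(1+U(x))$, the first piece reduces to the $x$-independent size condition $h^{1/4}\le\mu\lambda/(2K)$. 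The second piece, after exploiting $(1+U(x))^\g\ge U(x)^\g$, becomes $Kh^{1/4}U(x)^{(\ck-1)\g}\le\mu/2$, which on $|x|\le h^{-r}$ (where $U(x)\le 2\nu h^{-rq}$) reduces further to $h^\beta\le\mu/(2K(2\nu)^{(\ck-1)\g})$ with $\beta=\tfrac14-r(\ck-1)q\g$. Positivity of $\beta$ enforces $r<1/(4(\ck-1)q\g)$, and consolidating the two step-size bounds yields the stated upper bound on $h$.

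Finally, with \eqref{taming_cond1} verified and $\mu$ taken small enough to satisfy the quantitative requirement of \autoref{stability}, that theorem delivers $\overline{\lim}_kV(\bx_k)<\infty$ and $z(\bx_k)\to 0$ a.s.; since \eqref{cond_z} forces $\ker(z)=\{0\}$, this yields $\bx_k\to 0$ a.s., as claimed. The main obstacle is essentially bookkeeping in Step~2: one must balance the polynomial growth $U(x)^{\ck\g}$ against the lower bound on $z(x)$ and the decay in $h$ so that the exponent $\beta$ stays strictly positive under the stated constraint on $r$; conceptually the work is already absorbed into \autoref{stability}, \autoref{stab1} and \autoref{stab2}.
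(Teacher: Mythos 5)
Your proposal is correct and follows essentially the same route as the paper: reduce to \autoref{stability} via the projection/monotonicity argument of \eqref{rem1} (so $b^h=b$, $\s^h=\s$, $z^h=z$ and \eqref{Lh} is the hypothesis), then verify \eqref{taming_cond1} on $\{|x|\le h^{-r}\}$ by combining \autoref{growth} with the lower bound \eqref{cond_z}, which forces $\beta=1/4-r(\ck-1)q\g>0$ and a smallness condition on $h$. The only difference is bookkeeping — you clear the denominator and split into a $(1+U)$-piece and a $z$-piece, whereas the paper minimizes the right-hand side over $z$ and rearranges — which produces marginally different constants but the same constraints on $r$ and $h$ (just note the trivial regime $U\le1$, where $U^{\k_1\g}\vee U^{\k_2\g}\le1$, so that your reduction to $Kh^{1/4}U^{(\ck-1)\g}\le\mu/2$ is justified for all $x$ in the projected range).
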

\begin{proof}
Again one only needs to check condition \eqref{taming_cond1} for $b$ and $\s$ for scheme \eqref{scm2}, which satisfies $|\bx_k|\leqslant h^{-r},~\forall k\geqslant1$, with $z^h(\cdot)=z(\cdot)$. Indeed for all $x$ (regardless of $X_0$ since we are only interested in the long-term behaviour),
\begin{equation*}
|b(t,x)|h^{1/2}\vee\|\sigma(t,x)\|h^{1/4}\leqslant\mu\frac{(1+U(x))^\gamma z(x)}{1+U(x)+z(x)},
\end{equation*}
where, the left-hand-side term above has upper bound $Kh^{1/4}\left(U(x)^{\k_1\g}\vee U(x)^{\k_2)\g}\right)$, and the right-hand-side term minimizes when $z(x)$ reaches its lower bound in \eqref{cond_z}. Thus, due to $|x|\le h^{-r}$, one can require
\begin{align*}
&Kh^{1/4}\left(U(x)^{\k_1\g}\vee U(x)^{\k_2)\g}\right)\leqslant\mu\frac{\lambda(1+U(x))^\g\left(U(x)^{\k_1\g}\vee U(x)^{\k_2)\g}\right)}{(1+U(x))^\g+\lambda\left(U(x)^{\k_1\g}\vee U(x)^{\k_2\g}\right)}\\
\Leftrightarrow~&Kh^{1/4}\left(U(x)^{\k_1\g}\vee U(x)^{\k_2\g}\right)\le\left(\mu-\frac{K}{\lambda}h^{1/4}\right)(1+U(x))^\g\\
\Leftarrow~&\nu^{(\ck-1)\g}Kh^{1/4}(1+|x|^{(\ck-1)q\g})\le\mu-\frac{K}{\lambda}h^{1/4}\\
\Leftarrow~&\left(\frac{K}{\lambda}+\nu^{(\ck-1)\g}K\right)h^{1/4}+\nu^{(\ck-1)\g}Kh^{1/4-r(\ck-1)q\g}\le\mu.
\end{align*}
Set $r<(4(\ck-1)q\g)^{-1}$ s.t. $\beta=1/4-r\ck q\g>0$. One can then choose $h<\left(\mu\lambda/(K+2\lambda K\nu^{(\ck-1)\g})\right)^{1/\beta}$, and hence almost-sure stability is achieved.
\end{proof}

In most cases $V(\cdot)=U(\cdot)=|\cdot|^p$ is chosen, then $q=p=1/\g$ and the conditions become quite simple.
\begin{corollary}
In the special case where $V(\cdot)=|\cdot|^p$ and $z(x)\gtrsim|x|^{\k_1+p-1}+|x|^{\k_2+p-1}$, one just needs to choose $r$ and $h$ sufficiently small.
\end{corollary}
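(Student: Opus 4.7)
The plan is to reduce the proposition to \autoref{stability} applied to the projected scheme \eqref{scm2}, following the same reasoning used in the proofs of \autoref{project+tame} and \autoref{stab2}. The key observation is that by the monotonicity assumption \eqref{V_increase} the projection $\Pi$ never increases $V$, so that
\[
V(\bx_{k+1}) \le V\bigl(\bx_k + b(t_k,\bx_k)h + \s(t_k,\bx_k)\D W_{k+1}\bigr),
\]
and hence Taylor-expanding the right-hand side as in the proof of \autoref{stability} gives the same supermartingale-plus-remainder decomposition with $b^h=b$, $\s^h=\s$ and $z^h(\cdot)=z(\cdot)$. In particular the drift condition $\mathcal{L}V\le -z$ is already hypothesised, and the taming inequality \eqref{taming_cond1} only needs to be verified on the ball $\{|x|\le h^{-r}\}$ where the iterates live after the first projection.

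The main technical step is thus to pick $r$ and $h$ so that on $\{|x|\le h^{-r}\}$,
\[
|b(t,x)|h^{1/2}\vee \|\s(t,x)\|h^{1/4} \le \mu\,\frac{(1+U(x))^\g z(x)}{1+U(x)+z(x)}.
\]
By \autoref{growth} and $h\le 1$ the left-hand side is bounded above by $K(U(x)^{\k_1\g}\vee U(x)^{\k_2\g})h^{1/4}$, while using the lower bound \eqref{cond_z} on $z$ the right-hand side is bounded below by
\[
\mu\,\lambda\,\frac{(1+U(x))^\g (U(x)^{\k_1\g}\vee U(x)^{\k_2\g})}{(1+U(x))^\g + \lambda(U(x)^{\k_1\g}\vee U(x)^{\k_2\g})}.
\]
Rearranging the resulting inequality and applying $U(x)\le\nu(1+|x|^q)$ together with the truncation $|x|\le h^{-r}$ collapses everything to the single scalar condition
\[
\Bigl(\tfrac{K}{\lambda}+\nu^{(\ck-1)\g}K\Bigr)h^{1/4} + \nu^{(\ck-1)\g}K\, h^{1/4-r(\ck-1)q\g} \le \mu.
\]

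The only obstacle is ensuring that the exponent $\be:=1/4-r(\ck-1)q\g$ is strictly positive: this forces the choice $r<(4(\ck-1)q\g)^{-1}$, after which picking $h<\bigl(\mu\lambda/(K+2\lambda K\nu^{(\ck-1)\g})\bigr)^{1/\be}$ makes the displayed inequality hold uniformly in $x$ on the truncation ball. With these choices, and with the standing smallness hypothesis on $\mu$, \autoref{stability} applies and yields $\limsup_{k}V(\bx_k)<\infty$ together with $\lim_{k}z(\bx_k)=0$ almost surely. Finally, because $\ker(U)=\{0\}$ and $U$ is continuous and strictly positive off the origin, the lower bound \eqref{cond_z} forces $\ker(z)=\{0\}$, and we conclude $\bx_k\to 0$ almost surely, as required.
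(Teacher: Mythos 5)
Your argument is correct and follows essentially the same route as the paper: the corollary is just the preceding proposition specialised to $V(\cdot)=U(\cdot)=|\cdot|^p$ (so $q\g=1$), and your proof reproduces exactly that proposition's argument --- projection plus \eqref{V_increase} lets one take $b^h=b$, $\s^h=\s$, $z^h=z$ on $\{|x|\le h^{-r}\}$, verify \eqref{taming_cond1} via \autoref{growth} and \eqref{cond_z}, arrive at the same scalar smallness condition with $\be=1/4-r(\ck-1)q\g>0$, and invoke \autoref{stability} with $\ker(z)=\{0\}$. The choices $r<(4(\ck-1)q\g)^{-1}$ and $h<\bigl(\mu\lambda/(K+2\lambda K\nu^{(\ck-1)\g})\bigr)^{1/\be}$ coincide with the paper's, so in the special case this is precisely ``$r$ and $h$ sufficiently small.''
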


\subsection{Other Examples}
\begin{example}\label{lorenz}
Consider the Stochastic Lorenz Equation \cite{hutzenthaler2012numerical} in $\RR^3$ driven by a $3$-d Wiener process:
\begin{equation}
b(x)=\left(\begin{matrix}
\alpha_1(x_2-x_1)\\ -\a_1x_1-x_2-x_1x_3\\ x_1x_2-\a_2x_3
\end{matrix}\right),~\sigma(x)=\left(\begin{matrix}
\beta_1x_1 & 0 & 0 \\ 0 & \beta_2x_2 & 0 \\ 0 & 0 & \beta_3x_3
\end{matrix}\right),
\end{equation}
where $2\alpha_1>\beta_1^2,~\beta_2^2<2,~2\alpha_2>\beta_3^2$.
\end{example}
One can immediately check for the Lyapunov function $V(\cdot)=|\cdot|^2\in\bar{\mathcal{V}}^2_{1/2}$:
\begin{equation*}
\mathcal{L}|x|^2=-(2\alpha_1-\beta_1^2)x_1^2-(2-\beta_2^2)x_2^2-(2\alpha_2-\beta_3^2)x_3^2\le-\rho|x|^2,
\end{equation*}
where $\rho:=(2\a_1-\be_1^2)\wedge(2-\be_2^2)\wedge(2\a_2-\be_3^2)$. According to \autoref{sde_stability} the system \eqref{lorenz} is mean-square stable for the equilibrium. One can thus choose taming method \eqref{scm2} to preserve mean-square stability for the tamed Euler scheme. One observes
\begin{align*}
|b(x)|=&\sqrt{\a_1^2(x_2-x_1)^2+(\a_1x_1+x_2+x_3)^2+(x_1x_2-\a_2x_3)^2}\le K(|x|+|x|^2),\\
\|\sigma(x)\|=&\sqrt{\be_1^2x_1^2+\be_2^2x_2^2+\be_3x_3^2}\le K|x|
\end{align*}
where $K=\sqrt{5\a_1^2+4\a_1+\a_2^2+4}\vee\sqrt{\be_1^2+\be_2^2+\be_3^2}$. Then one can choose $U(x)=|x|+|x|^2,~\k_1=2,~\k_2=1$ for \autoref{growth} to hold. Note that due to $p=2$ in this case, one only needs requirement on $b(t,x)$ in \eqref{con_stab2}. Hence according to \autoref{stab2}, one needs to choose $r<1/2$ and $h<(2K)^{-1/(1/2-r)}$ sufficiently small.

\begin{example}\label{van_der_pol}
Consider the following $2$-d SDE with drift and diffusion similar to the Stochastic Duffing-van der Pol Oscilator \cite{hutzenthaler2012numerical}:
\begin{equation}\label{sde_van_der_pol}
b(x)=\left(\begin{matrix}x_2-\alpha_1x_1\\-\alpha_2x_2-x_1^3\end{matrix}\right),~\sigma(x)=\left(\begin{matrix}
0 & 0 & 0 \\
0 & \beta x_2 & 0
\end{matrix}\right),
\end{equation}
where $\alpha_1>0,~2\alpha_2>\beta^2$.
\end{example}
In this case one can set the Lyapunov function to be
\begin{equation}
V(x)=x_1^4+2x_2^2,
\end{equation}
which is from a broader class $\hat{\mathcal{V}}^4_{1/4}$. Then one observes that
\begin{equation*}
\mathcal{L}V(x)=-4\a_1x_1^4-(4\alpha_2-2\beta^2)x_2^2\le-\rho V(x),
\end{equation*}
where $\rho:=4\wedge(4\a_2-2\be^2)$. According to \autoref{sde_stability}, the trivial solution of \eqref{sde_van_der_pol} is $V$-exponentially stable. Therefore we consider using the projected scheme \eqref{scm2}, for which all conditions regarding $(b^h,\s^h,z^h)$ are reduced to those of $(b,\s,z)$ on the set $\{x:~|x|\le h^{-r}\}$. In this $2$-d case one can, for example, define
\begin{equation}
\Pi\left(\begin{matrix} x_1 \\ x_2 \end{matrix}\right)=\frac{1}{\sqrt{2}}\left(\begin{matrix} -h^{-r}\vee x_1\wedge h^{-r} \\ -h^{-r}\vee x_2\wedge h^{-r} \end{matrix}\right),
\end{equation}
s.t. $|\Pi x|\le h^{-r}$. Hence in order to verify condition \eqref{taming_cond2}, one only needs to estimate for $|x_1|\vee|x_2|\le h^{-r}/\sqrt{2}$,
\begin{align*}
|b(x)|h^{1/2}=&\left((\a_2+1)|x_2|+\a_1|x_1|+|x_1|^3\right)h^{1/2}\le\frac{\a_2+1}{\sqrt[4]{2}}|x_2|^{1/2}h^{1/2-r/2}+\frac{\a_1+1}{2}|x_1|h^{1/2-2r}\\
\le&\frac{\a_1\vee\a_2+1}{2}h^{1/2-2r}(|x_1|+2|x_2|^{1/2})\le\mu V(x)^{1/4},\\
\|\s(x)\|h^{1/4}=&|\beta||x_2|h^{1/4}\le\frac{|\beta|}{\sqrt[4]{2}}h^{1/4-r/2}|x_2|^{1/2}\le\mu V(x)^{1/4},
\end{align*}
where we choose $r<1/4$ and $\mu:=\max\{4(\a_1\vee\a_2+1)h^{1/2-2r}/2,|\beta|h^{1/4-r/2}/\sqrt[4]{2}\}\le1$. Thus according to \autoref{stability}, the projected scheme \eqref{scm2} is exponentially stable in $V$ when $h$ is chosen sufficiently small.

\section{Non-Negativity And Comparison Preservation}\label{sec:comp}

Apart from integrability and stability, there are some other properties on the SDE level that can be preserved via taming. For example, some SDEs have solution only in a bounded region, and especially in $1$-d case two SDEs with the same diffusion can be compared, subject to some conditions.
\subsection{Non-Negativity}
Consider a linear SDE
\begin{equation}
\td X_t=\mu X_t\td t+\sigma X_t\td W_t,
\end{equation}
where $\mu$ and $\sigma$ are non-zero constants. One knows that the solution is
\begin{equation*}
X_t=X_0\exp\left\{\left(\mu-\sigma^2/2\right)t+\sigma W_t\right\}\geqslant0,~\textrm{a.s.}
\end{equation*}
if $X_0\geqslant0$ a.s. However this may not be the case for the standard Euler scheme
\begin{equation*}
\bx_{k+1}=(1+\mu h)\bx_k+\sigma\bx_k\Delta W_{k+1}.
\end{equation*}
More precisely, suppose that $\bx_k\geqslant0$ a.s., then for $\sigma>0$,
\begin{equation*}
\prb(\bx_{k+1}<0)=\prb\left(\Delta W_{k+1}<-\frac{1+\mu h}{\sigma}\right)>0;
\end{equation*}
the same applies for $\sigma<0$ due to the symmetry of Gaussian distribution. However, one can avoid this situation by simply truncating the Wiener process. For SDEs with super-linear growth coefficients a little bit more work is needed to preserve non-negativity.

Non-negativity of the SDE can be regarded as a corollary of the comparison theorem \autoref{sde_comparison}. However, it turns out that one can deduce non-negativity by a much weaker condition than that of the comparison theorem.
\begin{lemma}\label{lem_pos}
Given a $1$-d SDE
\begin{equation}
\td X_t=b(t,X_t)\td t+\sigma(t,X_t)\td W_t,~X_0\geqslant0,~\textrm{a.s.}\label{sde_pos}
\end{equation}
Suppose
\begin{description}[noitemsep,nolistsep]\vspace*{-3mm}
\item[i)] The solution to SDE (\ref{sde_pos}) exists and is unique.
\item[ii)] $\sigma(t,x)$ has polynomial growth in $x$ and $b(t,x)$ satisfies one-sided Lipschitz condition:
\begin{equation}
(x-y)(b(t,x)-b(t,y))\leqslant K|x-y|^2,\label{4.3}
\end{equation}
\item[iii)] $b(t,0)\geqslant0,~\sigma(t,0)\equiv0,~\forall t\geqslant0$.
\end{description}\vspace*{-3mm}
Then $X_t\geqslant0$ a.s. $\forall t$.
\end{lemma}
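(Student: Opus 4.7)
\textbf{Proof sketch for \autoref{lem_pos}.} The plan is to apply Itô's formula to a smooth approximation of $x \mapsto (x^-)^2$, localize via stopping times, and close a Grönwall estimate using the one-sided Lipschitz condition on $b$ and the local Lipschitz property of $\sigma$ (inherited from the standing assumption and the vanishing $\sigma(t,0)\equiv0$).

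First I would localize. Set $\tau_R := \inf\{t\ge0:|X_t|\ge R\}$; existence of a global solution guarantees $\tau_R\to\infty$ a.s. as $R\to\infty$, so it suffices to prove $X_{t\wedge\tau_R}\ge0$ a.s. for every fixed $R$. Next, construct a family $\phi_\eps\in\mathcal{C}^2(\RR)$ approximating $x\mapsto(x^-)^2$ from below with the usual properties: $\phi_\eps(x)=0$ for $x\ge0$; $0\le\phi_\eps(x)\uparrow(x^-)^2$; $\phi_\eps'(x)\le0$ with $\phi_\eps'(x)\to-2x^-$; and $0\le\phi_\eps''(x)\le2$ with $\phi_\eps''(x)\to2\one_{\{x<0\}}$ as $\eps\to0$. (Such functions are obtained by mollifying $(x^-)^2$ on a shrinking neighbourhood of $0$.) Since $X_0\ge0$ a.s. one has $\phi_\eps(X_0)=0$, so Itô's formula on $[0,t\wedge\tau_R]$ together with the boundedness of the integrand up to $\tau_R$ gives, after taking expectation,
\begin{equation*}
\ex\phi_\eps(X_{t\wedge\tau_R})
= \ex\!\int_0^{t\wedge\tau_R}\!\phi_\eps'(X_s)b(s,X_s)\,\td s
+ \tfrac{1}{2}\ex\!\int_0^{t\wedge\tau_R}\!\phi_\eps''(X_s)\sigma(s,X_s)^2\,\td s.
\end{equation*}

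The heart of the argument is to bound the two integrands by a constant multiple of $(X_s^-)^2$. For the drift, split $b(s,X_s)=\bigl(b(s,X_s)-b(s,0)\bigr)+b(s,0)$. The second piece contributes $\phi_\eps'(X_s)b(s,0)\le0$ since $\phi_\eps'\le0$ and $b(s,0)\ge0$. For the first piece, the one-sided Lipschitz condition \eqref{4.3} with $y=0$ reads $X_s\bigl(b(s,X_s)-b(s,0)\bigr)\le K X_s^2$; since $\phi_\eps'(X_s)$ vanishes on $\{X_s\ge0\}$ and is asymptotic to $-2X_s^-=2X_s\one_{\{X_s<0\}}$ near the limit, one obtains $\phi_\eps'(X_s)\bigl(b(s,X_s)-b(s,0)\bigr)\le 2K(X_s^-)^2+o_\eps(1)$. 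For the diffusion, the locally-Lipschitz hypothesis stated at the beginning of the paper combined with $\sigma(s,0)\equiv0$ yields $|\sigma(s,x)|\le L_R|x|$ for $|x|\le R$, and since $\phi_\eps''(X_s)$ is bounded by $2$ and supported in $\{X_s<0\}$ in the limit, $\phi_\eps''(X_s)\sigma(s,X_s)^2\le 2L_R^2(X_s^-)^2+o_\eps(1)$.

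Passing $\eps\to0$ by monotone/dominated convergence (the integrands are uniformly bounded on $[0,\tau_R]$ since $|X_s|\le R$ and $\sigma$ has polynomial growth) yields
\begin{equation*}
\ex[(X_{t\wedge\tau_R}^-)^2]\le (2K+L_R^2)\int_0^t\ex[(X_{s\wedge\tau_R}^-)^2]\,\td s.
\end{equation*}
Grönwall's inequality then forces $\ex[(X_{t\wedge\tau_R}^-)^2]=0$ for every $t\ge0$, so $X_{t\wedge\tau_R}\ge0$ a.s., and letting $R\to\infty$ concludes the proof.

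The step I expect to require the most care is the drift estimate: \eqref{4.3} controls $X_s(b(s,X_s)-b(s,0))$, not $\phi_\eps'(X_s)(b(s,X_s)-b(s,0))$ directly, so the mollifier $\phi_\eps$ must be engineered so that $\phi_\eps'(x)$ and $2x\one_{\{x<0\}}$ have the same sign and ratio tending to $1$, in order to transfer the one-sided Lipschitz bound without a spurious sign. Everything else is a standard localization-plus-Grönwall routine.
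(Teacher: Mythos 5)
Your argument is correct, but it is a genuinely different route from the paper's. The paper approximates the first power $f(x)=x^-$ by smooth $\phi_n$ with $\phi_n'\to-\one_{\{x<0\}}$ and $\phi_n''\to0$, so that the second-order (local-time type) term disappears in the limit -- a Tanaka/Yamada--Watanabe-style step that is only legitimate because $|\sigma(t,x)|^2\lesssim|x|^m$, $m\ge1$, near the origin kills the mass that $\phi_n''$ would otherwise pick up at $0$; it then splits $b=b_1+b_2$ with $b_1$ monotone decreasing and $b_2(t,x)=Kx$ Lipschitz, and runs Gr\"onwall on $\ex X_t^-$. You instead work with the $\mathcal{C}^1$ function $(x^-)^2$, whose mollification has a \emph{bounded} second derivative, pay for that by invoking the standing locally-Lipschitz assumption on $\sigma$ together with $\sigma(t,0)\equiv0$ to get the linear bound $|\sigma(s,x)|\le L_R|x|$ on $\{|x|\le R\}$, and run Gr\"onwall on $\ex[(X^-_{t\wedge\tau_R})^2]$ before letting $R\to\infty$. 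Each approach buys something: the paper's version needs only a power-type vanishing of $\sigma$ at $0$ (so it would tolerate H\"older-type diffusions with $|\sigma|\lesssim|x|^{1/2}$, where your linear bound fails), and it never squares, so no second moment of $X^-$ enters; your version avoids the paper's somewhat delicate claim that $\phi_n''\to0$ uniformly while $\phi_n'\to-\one_{\{x<0\}}$ (taken literally this is impossible -- the intended meaning is the weighted Yamada--Watanabe estimate), and your explicit localization via $\tau_R$ makes the vanishing of the stochastic-integral expectation rigorous, a point the paper passes over silently. Your closing remark about engineering $\phi_\eps'$ to be a nonnegative multiple of $x$ on $\{x<0\}$ is exactly the right care point, and with the standard construction ($\phi_\eps''=\psi_\eps\in[0,2]$ supported in $\{x\le-\eps\}$) the drift and diffusion bounds in fact hold with no $o_\eps(1)$ error at all.
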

This is proved in Appendix \autoref{proof_of_positivity}.

Now consider a tamed Euler scheme of \eqref{sde_pos}:
\begin{equation}
	\hat{X}_{k+1}=\bar{X}_k+b^h(t_k,\hat{X}_k)h+\sigma^h(t_k,\hat{X}_k)\sqrt{h}\xi,\label{3.1}
\end{equation}
where $\xi\sim N(0,1)$. Non-negativity generally does not hold any more for $\hat{X}_k$, but one can recover this property by truncating the noise:
\begin{equation}\label{zeta}
	\zeta_h=\begin{cases}A_h,~\xi>A_h,\\ \xi,~-A_h\leqslant\xi\leqslant A_h,\\-A_h,~\xi<-A_h,\end{cases}
\end{equation}
where one takes $A_h=\sqrt{2|\log h|}$ to preserve strong convergence (See section 1.3.4 in \cite{milstein2004stochastic}).

\begin{theorem}
Let the assumptions in \autoref{lem_pos} hold. If one can find tamed method such that the coefficients in \eqref{3.1} satisfies
\begin{equation}
|\hat{b}^h(t,x)|h^\a\vee|\sigma^h(t,x)|h^{\a/2}\leqslant \mu|x|,~\forall t\ge0,~x\in\RR,\label{3.2}
\end{equation}
for some $\mu,\a>0$, where $\hat{b}(t,x)=b(t,x)-b(t,0)$, then the tamed Euler scheme
\begin{equation}\label{tame_pos}
\bx_{k+1}=\bx_k+b^h(t_k,\bx_k)h+\s^h(t_k,\bx_k)\sqrt{h}\zeta_h,
\end{equation}
is almost surely non-negative for $\a<1$ and $h$ sufficiently small.
\end{theorem}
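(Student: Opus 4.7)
The plan is to proceed by induction on $k$, showing that if $\bx_k\ge0$ almost surely then the same holds for $\bx_{k+1}$. The base case $\bx_0=X_0\ge0$ holds by hypothesis. For the inductive step, the idea is to split the tamed drift into its value at zero plus a remainder, so that one can absorb every negative contribution into a multiplicative factor in front of $\bx_k$.

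More concretely, write
\[
\bx_{k+1}=\bx_k+b^h(t_k,0)h+\hat b^h(t_k,\bx_k)h+\s^h(t_k,\bx_k)\sqrt h\,\z_h,
\]
where $\hat b^h(t,x):=b^h(t,x)-b^h(t,0)$. Since the taming procedure should be set up so that $b^h(t,0)\ge0$ and $\s^h(t,0)\equiv0$ (the natural discrete analogue of hypothesis iii), the first new term is non-negative and can be dropped from the lower bound. Using hypothesis \eqref{3.2} on $\hat b^h$ and $\s^h$, and $|\z_h|\le A_h=\sqrt{2|\log h|}$, one obtains
\[
\bx_{k+1}\ge\bx_k\Bigl(1-\mu h^{1-\a}-\mu A_h h^{(1-\a)/2}\Bigr).
\]
Because $\a<1$, both $h^{1-\a}$ and $A_h h^{(1-\a)/2}=\sqrt{2|\log h|}\,h^{(1-\a)/2}$ tend to $0$ as $h\to0$. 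Hence for $h$ small enough the factor in parentheses is strictly positive, so $\bx_{k+1}\ge0$ and the induction closes.

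The main technical point, and the only place where care is needed, is justifying that the chosen taming yields $\s^h(t,0)=0$ and $b^h(t,0)\ge0$, so that the decomposition above isolates a truly non-negative piece. Once that is in place, the role of the truncation $\z_h$ is precisely to turn the Gaussian tail — which made the standard Euler scheme fail — into a deterministic bound $A_h$, and the exponent $\a<1$ is exactly what guarantees $A_h h^{(1-\a)/2}\to0$. No moment estimates or martingale arguments are needed; the statement is pathwise.
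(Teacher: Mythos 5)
Your proof is correct and follows essentially the same route as the paper: decompose the drift around its value at zero, bound the remaining terms via \eqref{3.2} and the deterministic bound $|\zeta_h|\le A_h$, and conclude $\bx_{k+1}\ge\bx_k\bigl(1-\mu h^{1-\a}-\mu h^{(1-\a)/2}A_h\bigr)\ge0$ for $h$ small. The only cosmetic difference is that the paper resolves the point you flag by taking the tamed drift to be $b^h(t,x)=b(t,0)+\hat b^h(t,x)$, so the untamed nonnegative constant $b(t,0)h$ appears directly and no separate assumption $b^h(t,0)\ge0$ is needed (while $\s^h(t,0)=0$ already follows from \eqref{3.2}).
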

\begin{proof}
Rewrite the scheme \eqref{tame_pos} as
\begin{align*}
\bx_{k+1}=&\bx_k+b(t_k,0)h+\hat{b}^h(t_k,\bx_k)h+\sigma^h(t_k,\bx_k))\sqrt{h}\zeta_h\\
\geqslant&\bx_k\left(1-\mu h^{1-\alpha}-\mu h^{1/2-\a/2}A_h\right)\num\label{3.3}
\end{align*}
as $b(t,0)\ge0$. In order for \eqref{3.3} to stay nonnegative, we must have $\a<1$ and
\begin{equation}
	h^{1-\a}+h^{1/2-\a/2}A_h\leqslant\frac{1}{\mu}.
\end{equation}
\end{proof}

As we already know \eqref{3.2} can be realised by letting
\begin{equation}
	\hat{b}^h(t,x)=\frac{\hat{b}(t,x)}{1+G(x)h^\alpha},~\s^h(t,x)=\frac{\s(t,x)}{1+G(x)h^\a},\label{3.4}
\end{equation}
for some $G(\cdot)\ge0,~0<\a<1$, to linearise $\hat{b}(t,x)$. If $\hat{b}(t,x)$, hence $b(t,x)$, has polynomial growth with degree $m$, one may think of letting $G(x)=C|x|^{m-1}$ with $C>0$, then
\begin{align*}
|\hat{b}^h(t,x)|=&\frac{|\hat{b}(t,x)|}{1+C|x|^{m-1}h^\alpha}\lesssim\frac{1+|x|^m}{h^\alpha(1+|x|^{m-1})}\\
\leqslant&\frac{1+|x|^{m-1}+|x|+|x|^m}{h^\alpha(1+|x|^{m-1})}=(1+|x|)h^{-\alpha}.
\end{align*}

However for the projected scheme \eqref{scm2}, one does not need to further truncate the noise via \eqref{zeta}. Instead one only needs to define a reasonable truncation $\Pi$, e.g. similar to what is suggested in \cite{Jean-FrancoisChassagneux2014},
\begin{equation}
\Pi x=\left(0\vee x_i\wedge h^{-r}\right)_{i=1,\cdots,d},
\end{equation}
where $r$ is chosen s.t. \autoref{trun_conv} holds.

\subsection{Comparison Result}
As an extension of non-negativity preservation, one can preserve comparison result for SDEs by applying taming techniques.
\par It is known that two SDEs with the same diffusion can be compared by the comparison theorem:
\begin{theorem}\label{sde_comparison}
Consider two SDEs:
\begin{align*}
\td X_t=&\nu(t,X_t)\td t+\sigma(t,X_t)\td W_t,\\
\td Y_t=&\lambda(t,X_t)\td t+\sigma(t,Y_t)\td W_t.
\end{align*}
Suppose the following conditions are satisfied:
\begin{description}[noitemsep,nolistsep]\vspace*{-3mm}
\item[(i)] $\nu,\lambda,\sigma$ are continuous in $x$.
\item[(ii)] $X_t$ and $Y_t$ exist and are unique, respectively.
\item[(iii)] $X_0\leqslant Y_0$ a.s.
\item[(iv)] $\nu(t,x)\leqslant\lambda(t,x),~\forall t\ge0, x\in\mathbb{R}$.
\item[(v)] Either $\lambda$ or $\mu$ satisfies one-sided Lipschitz condition \eqref{4.3}.
\end{description}\vspace*{-3mm}
Then $X_t\leqslant Y_t$ a.s. $\forall t\geqslant0$.
\end{theorem}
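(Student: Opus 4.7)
The plan is to run the classical Yamada--Watanabe argument on the difference process $Z_t := X_t - Y_t$, combining a smooth approximation of $x^+$ with \Ito's formula and a Gronwall estimate. Reading the drift of $Y$ in the natural way as $\lambda(t,Y_t)$ (so that the OSL in (v) is meaningfully applied in the comparison), (iii) gives $Z_0 \le 0$ a.s., and
\[
\td Z_t = [\nu(t,X_t) - \lambda(t,Y_t)]\td t + [\s(t,X_t) - \s(t,Y_t)]\td W_t.
\]
It therefore suffices to prove $\ex Z_t^+ = 0$ for every $t\ge 0$.

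First I would construct the Yamada--Watanabe $\mathcal{C}^2$ approximations $\phi_n$ of $x\mapsto x^+$: choose $a_n\downarrow 0$ with $a_0=1$ and $\int_{a_n}^{a_{n-1}} u^{-1}\td u = n$, a smooth bump $\psi_n\ge 0$ supported in $(a_n,a_{n-1})$ with $\psi_n(u)\le 2/(nu)$ and $\int\psi_n=1$, and set $\phi_n(x) := \int_0^{x\vee 0}\td y\int_0^y \psi_n(u)\td u$. Then $\phi_n(x)\uparrow x^+$, $\phi_n'\in[0,1]$ is supported on $(0,\infty)$, and $\phi_n'' = \psi_n\1_{(0,\infty)}$. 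Applying \Ito's formula to $\phi_n(Z_t)$, localising, and taking expectations (noting $\phi_n(Z_0)=0$) yields
\[
\ex\phi_n(Z_t) = \ex\!\int_0^t \phi_n'(Z_s)[\nu(s,X_s)-\lambda(s,Y_s)]\td s + \tfrac{1}{2}\ex\!\int_0^t \phi_n''(Z_s)[\s(s,X_s)-\s(s,Y_s)]^2\td s.
\]

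For the drift, WLOG $\lambda$ satisfies the OSL, and I split
$\nu(s,X_s) - \lambda(s,Y_s) = [\nu(s,X_s) - \lambda(s,X_s)] + [\lambda(s,X_s) - \lambda(s,Y_s)]$. The first bracket is non-positive by (iv) and multiplying by $\phi_n'\ge 0$ preserves this sign; on $\{Z_s>0\}$ the OSL gives $\lambda(s,X_s)-\lambda(s,Y_s)\le K Z_s$, so using $0\le\phi_n'\le 1$ the drift contribution is at most $K\int_0^t \ex Z_s^+\,\td s$. The diffusion integral is the crux step: one invokes a Yamada--Watanabe-type modulus $(\s(s,x)-\s(s,y))^2 \le \rho(|x-y|)$ with $\int_{0^+}\td u/\rho(u)=+\infty$ (the usual choice being $\rho(u)=Lu$, i.e.\ H\"older-$1/2$), which is in fact exactly the regularity on $\s$ that makes the strong well-posedness hypothesis (ii) plausible in the scalar setting. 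Under such a bound, $\phi_n''(Z_s)[\s(s,X_s)-\s(s,Y_s)]^2\le 2/n$ pointwise, so this term is $O(t/n)$.

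Letting $n\to\infty$ via monotone convergence on the left yields
\[
\ex Z_t^+ \le K\int_0^t \ex Z_s^+\,\td s,
\]
and Gronwall's lemma delivers $\ex Z_t^+=0$ for all $t\ge 0$, hence $X_t\le Y_t$ a.s. The main obstacle is the diffusion estimate: condition (i) alone (mere continuity of $\s$) is not enough to make $\phi_n''(Z_s)(\s(s,X_s)-\s(s,Y_s))^2$ vanish, and one must lean on the Yamada--Watanabe-type regularity on $\s$ that is implicit in (ii). Everything else--the construction of $\phi_n$, the drift decomposition, and the Gronwall closure--is routine bookkeeping around the classical scheme.
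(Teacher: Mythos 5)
Your proposal follows essentially the same route as the paper's: the paper proves this theorem ``by the same recipe as \autoref{lem_pos}'', namely It\^{o}'s formula applied to a smooth approximation of the positive/negative part of the difference process, the drift handled by splitting via (iv) together with the one-sided Lipschitz condition, and Gronwall to close — which is exactly your scheme with $Z_t=X_t-Y_t$ and the approximations of $x^+$. Your explicit Yamada--Watanabe construction and your flag that a modulus (e.g.\ H\"older-$1/2$) on $\s$ is needed to make the $\phi_n''$ term vanish corresponds precisely to the in-proof assumption $|\s(t,x)|^2\le K|x|^m$ that the paper itself introduces in the proof of \autoref{lem_pos}, so you have correctly identified, rather than created, the only hypothesis missing from the stated conditions (i)--(v).
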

One can prove using the same recipe as the proof of \autoref{lem_pos}. The simplest case being $\nu(t,x)=\nu x,~\lambda(t,x)=\lambda x,~\sigma(t,x)=\sigma x$, where $\nu,\lambda,\sigma>0$ are constants, we know that $X_t$ and $Y_t$ are both non-negative if $X_0,Y_0\geqslant0$. Hence $X_t\leqslant Y_t$ a.s., $\forall t$, if $\nu\leqslant\lambda$ and $X_0\leqslant Y_0$.

Now consider the Euler scheme for each equation
\begin{align*}
\hat{X}_{k+1}=&\hat{X}_k+\nu(t_k,\hat{X}_k)h+\sigma(t_k,\hat{X}_k)\sqrt{h}\xi,\\
\hat{Y}_{k+1}=&\hat{Y}_k+\lambda(t_k,\hat{Y}_k)h+\sigma(t_k,\hat{Y}_k)\sqrt{h}\xi,
\end{align*}
where $\xi~N(0,1)$. In general comparison result does not necessarily hold for $\hat{X}_k$ and $\hat{Y}_k$, but by truncating the noise using \eqref{zeta} it can be recovered.

\begin{theorem}
Let the assumptions in \autoref{sde_comparison} hold with $\lambda$ satisfying one-sided Lipschitz condition \eqref{4.3}. If there is a taming method s.t.
\begin{equation}\label{tame_comp}
|\lambda^h(t,x)-\lambda^h(t,y)|h^\a\vee|\s^h(t,x)-\s^h(t,y)|h^{\a/2}\le \mu|x-y|,~\forall x,y\in\RR,~t\ge0,
\end{equation}
for some $\mu,\a>0$, and
\begin{equation}
\nu^h(t,x)\le\lambda^h(t,x),~\forall t\ge0,~x\in\RR,
\end{equation}
then the tamed Euler schemes
\begin{align*}
\bx_{k+1}=&\bx_k+\nu^h(t_k,\bx_k)h+\sigma^h(t_k,\bx_k)\sqrt{h}\zeta_h,\\
\by_{k+1}=&\by_k+\lambda^h(t_k,\by_k)h+\sigma^h(t_k,\by_k)\sqrt{h}\zeta_h,
\end{align*}
where $\zeta_h$ is defined as in \eqref{zeta}, preserve comparison result, i.e.,
\begin{equation*}
\bx_k\le\by_k,~\forall k\ge0,
\end{equation*}
for $\a<1$ and $h$ sufficiently small.
\end{theorem}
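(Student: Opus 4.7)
The plan is to mirror the proof of the non-negativity theorem, using induction on $k$ applied to the difference $\by_k - \bx_k$, with the noise truncation $\zeta_h$ providing a deterministic upper bound on the stochastic increment.

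First I would set up the induction. The base case is $\bx_0 \le \by_0$ a.s., which holds by assumption (iii) of \autoref{sde_comparison}. Assuming $\bx_k \le \by_k$ a.s., I compute
\begin{align*}
\by_{k+1}-\bx_{k+1} = (\by_k-\bx_k) + \bigl(\lambda^h(t_k,\by_k)-\nu^h(t_k,\bx_k)\bigr)h + \bigl(\s^h(t_k,\by_k)-\s^h(t_k,\bx_k)\bigr)\sqrt{h}\,\zeta_h,
\end{align*}
and split the drift difference as
\begin{align*}
\lambda^h(t_k,\by_k)-\nu^h(t_k,\bx_k) = \bigl(\lambda^h(t_k,\by_k)-\lambda^h(t_k,\bx_k)\bigr) + \bigl(\lambda^h(t_k,\bx_k)-\nu^h(t_k,\bx_k)\bigr).
\end{align*}
The second bracket is nonnegative by the assumption $\nu^h \le \lambda^h$. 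For the first bracket, the tamed Lipschitz condition \eqref{tame_comp} yields $|\lambda^h(t_k,\by_k)-\lambda^h(t_k,\bx_k)|\le \mu (\by_k-\bx_k) h^{-\a}$, which in particular gives the one-sided bound $\lambda^h(t_k,\by_k)-\lambda^h(t_k,\bx_k) \ge -\mu(\by_k-\bx_k)h^{-\a}$.

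Next I would control the noise term. Since $|\zeta_h| \le A_h = \sqrt{2|\log h|}$ deterministically, and again by \eqref{tame_comp},
\begin{align*}
\bigl|\bigl(\s^h(t_k,\by_k)-\s^h(t_k,\bx_k)\bigr)\sqrt{h}\,\zeta_h\bigr| \le \mu(\by_k-\bx_k)h^{-\a/2}\sqrt{h}\,A_h = \mu(\by_k-\bx_k)h^{1/2-\a/2}A_h.
\end{align*}
Combining these estimates,
\begin{align*}
\by_{k+1}-\bx_{k+1} \ge (\by_k-\bx_k)\bigl(1 - \mu h^{1-\a} - \mu h^{1/2-\a/2}A_h\bigr).
\end{align*}
Finally, since $\a<1$ implies $1-\a>0$ and $1/2-\a/2>0$, while $A_h = \sqrt{2|\log h|}$ grows only logarithmically, the factor in parentheses is nonnegative for $h$ sufficiently small. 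By induction $\by_k \ge \bx_k$ a.s.\ for all $k\ge 0$.

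The main subtlety is the sign handling of the diffusion term: $\zeta_h$ is two-sided and $\s^h(t_k,\by_k)-\s^h(t_k,\bx_k)$ need not be signed, so we must use the worst-case deterministic bound $|\zeta_h|\le A_h$ rather than any martingale-type argument. This is precisely why the truncation \eqref{zeta} is needed; with untruncated Gaussian noise the coefficient would almost surely fail to stay positive. The analysis otherwise parallels the non-negativity proof with $\bx_k$ replaced by $\by_k-\bx_k$ and $b(t,0)\ge 0$ replaced by $\lambda^h(t,\bx_k)-\nu^h(t,\bx_k)\ge 0$.
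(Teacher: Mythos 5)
Your proof is correct and follows essentially the same route as the paper's: split the drift difference using $\nu^h\le\lambda^h$, apply the tamed Lipschitz bound \eqref{tame_comp} to both the drift and diffusion increments, use the deterministic bound $|\zeta_h|\le A_h$, and conclude $\by_{k+1}-\bx_{k+1}\ge(\by_k-\bx_k)\bigl(1-\mu h^{1-\a}-\mu h^{1/2-\a/2}A_h\bigr)\ge0$ for $\a<1$ and $h$ small. Your explicit induction merely makes precise what the paper leaves implicit, so there is nothing to add.
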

\begin{proof}
Write $\lambda(t,x)=\lambda(t,0)+\hat{\lambda}(t,x)$, then \eqref{tame_comp} implies $|\hat{\lambda}^h(t,x)-\hat{\lambda}^h(t,y)|\leqslant \mu|x-y|h^{-\alpha},~\forall t\ge0,~x,y\in\RR$. Thus
\begin{align*}
\by_{k+1}-\bx_{k+1}\geqslant&(\by_k-\bx_k)(1-\mu h^{1/2-\a/2}A_h)+(\lambda^h(\by_k)-\nu^h(\bx_k))h\\
\geqslant&(\by_k-\bx_k)(1-\mu h^{1/2-\a/2}A_h)+(\lambda^h(\by_k)-\lambda^h(\bx_k))h\\
\geqslant&(\by_k-\bx_k)(1-\mu h^{1-\alpha}-\mu h^{1/2-\a/2}A_h).
\end{align*}
Require $\a<1$ and $h^{1-\alpha}+h^{1/2-\a/2}A_h\leqslant1/\mu$, then comparison result still holds for $\bx_k$ and $\by_k$.
\end{proof}

Condition $\nu^h(t,x)\le\lambda^h(t,x)$ is usually immediately satisfied given $\nu(t,x)\le\lambda(t,x),~\forall t,x$. Now investigate whether \eqref{tame_comp} can be possible. Assume $\lambda(t,x)$ is differentiable for all $x$ and $|\partial_x\lambda(t,x)|\vee|\lambda(t,x)|\leqslant K(1+|x|^m)$ for some $K>0,m\ge1$. Consider
\begin{equation*}
\lambda^h(t,x)=\frac{\lambda(t,x)}{1+h^\alpha|x|^{m-1}},
\end{equation*}
By the mean-value theorem we have $|\lambda^h(t,x)-\lambda^h(t,y)|\le|\partial_x\lambda^h(t,\xi)||x-y|$ for some $\xi$ between $x$ and $y$. Then one applies the chain rule,
\begin{align*}
|\partial_x\lambda^h(t,\xi)|\leqslant&\frac{|\partial_x\lambda^h(t,\xi)|(1+h^\alpha|\xi|^{m-1})+|\lambda(t,\xi)|h^\alpha(m-1)|\xi|^{m-2}}{(1+h^\alpha|\xi|^{m-1})^2}\\
\leqslant&K\frac{(1+|\xi|^{m-1})(1+|\xi|^{m-1}h^\alpha)+(1+|\xi|^m)|\xi|^{m-2}h^\alpha}{(1+h^\alpha|\xi|^{m-1})^2}\\
=&K\frac{1+h^\alpha|\xi|^{m-2}+(1+h^\alpha)|\xi|^{m-1}+2h^\alpha|\xi|^{2m-2}}{1+2h^\alpha|\xi|^{m-1}+h^{2\alpha}|\xi|^{2m-2}}\\
\leqslant&K\frac{1+2|\xi|^{m-1}+h^\alpha|\xi|^{2m-2}}{h^\alpha(1+2|\xi|^{m-1}+h^\alpha|\xi|^{2m-2})}\\
=&Kh^{-\alpha},
\end{align*}
which implies that $|\lambda^h(x)-\lambda^h(y)|\leqslant K|x-y|h^{-\alpha}$.

\begin{appendices}
\section{$V$-Integrability Applied to Strong Convergence}\label{strongconvergence}
In the context of strong convergence, one needs the following setting for the SDE
\begin{equation} \label{eq:SDE_app}
\td X_t = b(t,X_t)\td t+\s(t,X_t)\td W_t,~t\in[0,T].
\end{equation}
\begin{assumption}\label{ass:convergence}
For a given number $p\ge1$, there is an even number $p_0>p$ sufficiently large (depending on the choice of taming) s.t. the coefficients of SDE \eqref{eq:SDE_app} satisfy, $\forall t,s\in[0,T],~x,y\in\RR^d$,
\begin{itemize}\vspace*{-5mm}
\item[i)]$\lj x-y,b(t,x)-b(t,y)\rj+\frac{p_0-1}{2}\lev\sigma(t,x)-\sigma(t,y)\rev^2\lesssim|x-y|^2$;
\item[ii)]$|b(t,0)|\vee\|\s(t,0)\|\vee\ex|X_0|^{p_0}<\infty$;
\item[iii)]$|b(t,x)-b(t,y)|\lesssim \left(1+|x|^{\k-1}+|y|^{\k-1}\right)|x-y|$ and\\
$\|\s(t,x)-\s(t,y)\|\lesssim \left(1+|x|^{(\k-1)/2}+|y|^{(\k-1)/2}\right)|x-y|$, for some $\k\ge1$;
\item[iv)]$|b(t,x)-b(s,x)|\lesssim \left(1+|x|^\k\right)|t-s|$ and\\
$\|\s(t,x)-\s(s,x)\|\lesssim \left(1+|x|^{(\k+1)/2}\right)|t-s|$.
\end{itemize}\vspace*{-5mm}
\end{assumption}
Note that i) and ii) above imply that $\forall t\in[0,T],~x,y\in\RR^d$,
\begin{equation*}
\lj x-y,b(t,x)-b(t,y)\rj+\frac{p-1}{2}\lev\sigma(t,x)-\sigma(t,y)\rev^2\lesssim|x-y|^2,
\end{equation*}
which is needed for the one-step perturbation\footnote{Or one-step stability, not to be confused with the stability of equilibrium.} estimate $\left(X_{t,x}(t+h)-X_{t,y}(t+h)\right)$ for the SDE. If we let $V(\cdot)=|\cdot|^{p_0}\in\hat{\mathcal{V}}^{p_0}_{1/p_0}$, then i) and ii) also imply that
\begin{equation}\label{mono}
\mathcal{L}V(x)=|x|^{p_0-2}\left(\lj x,b(t,x)\rj+\frac{p_0-1}{2}\lev\s(t,x)\rev^2\right)\lesssim1+V(x),
\end{equation}
which together with the growth condition implied by ii) and iii),
\begin{equation}\label{growth3}
|b(t,x)|\lesssim1+|x|^\k,~\|\s(t,x)\|\lesssim1+|x|^{(\k+1)/2},~\forall t\in[0,T],~x\in\RR^d,
\end{equation}
can make it possible for the tamed Euler scheme to achieve \autoref{th:main}.

By comparing the one-step error against the standard Euler scheme
\begin{equation}\label{eq:SEuler}
\tx_{t,x}(t+h) = x + b(t,x)h + \s(t,x)(W_{t+h}-W_t),
\end{equation}
the result in \cite{tretyakov2012fundamental} can be concluded as follows:
\begin{theorem}\label{convergence}
Let \autoref{ass:convergence} hold for some $p\ge1$ and $p_0>p$ sufficiently large. If the one-step difference against the standard Euler scheme \eqref{eq:SEuler} satisfies
\begin{align*}
&\lev\bx_{t,x}(t+h)-\tx_{t,x}(t+h)\rev_{L^p(\Omega)}\lesssim\left(1+|x|^\a\right)h^\delta,\\
&\levv\ex\bx_{t,x}(t+h)-\ex\tx_{t,x}(t+h)\revv\lesssim\left(1+|x|^{\a'}\right)h^{\delta+1/2},
\end{align*}
for some $\a,\a'>0,~\delta>1/2$, then
\begin{equation*}
\lev\bx_k-X_{t_k}\rev_{L^p(\Omega)}=O(h^{\delta-1/2}).
\end{equation*}
\end{theorem}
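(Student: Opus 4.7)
The plan is to adapt the classical fundamental mean-square convergence theorem of Milstein (see \cite{milstein2004stochastic,tretyakov2012fundamental}) to this tamed setting. The starting point is a telescoping identity using the exact SDE flow $X_{s,x}(\cdot)$: letting $Y^{(k)}_j := X_{t_j,\bar{X}_j}(t_k)$ for $0\le j\le k$, the endpoints satisfy $Y^{(k)}_0 = X_{t_k}$ (taking $\bar{X}_0=X_0$) and $Y^{(k)}_k = \bar{X}_k$, so by the flow property
\[
\bar{X}_k - X_{t_k} \;=\; \sum_{j=0}^{k-1}\bigl(Y^{(k)}_{j+1} - Y^{(k)}_j\bigr) \;=\; \sum_{j=0}^{k-1}\Bigl(X_{t_{j+1},\bar{X}_{j+1}}(t_k) - X_{t_{j+1},X_{t_j,\bar{X}_j}(t_{j+1})}(t_k)\Bigr).
\]

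Next I would introduce the local one-step error $\varepsilon_j := \bar{X}_{j+1} - X_{t_j,\bar{X}_j}(t_{j+1})$ and split it as
\[
\varepsilon_j \;=\; \bigl(\bar{X}_{j+1} - \tilde{X}_{t_j,\bar{X}_j}(t_{j+1})\bigr) \;+\; \bigl(\tilde{X}_{t_j,\bar{X}_j}(t_{j+1}) - X_{t_j,\bar{X}_j}(t_{j+1})\bigr).
\]
The first summand is controlled directly by the hypothesis (strong order $h^\delta$, weak order $h^{\delta+1/2}$, each with polynomial factor $1+|\bar{X}_j|^{\alpha\vee\alpha'}$). The second is the standard Euler one-step error against the exact SDE, which under \autoref{ass:convergence}(iii),(iv) is classically $O(h)$ strongly and $O(h^{3/2})$ weakly, again with polynomial growth in $\bar{X}_j$. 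Hence $\varepsilon_j$ inherits the two one-step rates $(h^\delta,\,h^{\delta+1/2})$.

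To transport $\varepsilon_j$ to time $t_k$, I would invoke two $L^p$-stability estimates for the SDE flow in its initial datum. First, \autoref{ass:convergence}(i),(ii) applied with exponent $p$, combined with It\^{o}'s formula for $|X_{t,x}(s)-X_{t,y}(s)|^p$ and Gronwall, yields $\lev X_{t,x}(s) - X_{t,y}(s) \rev_{L^p(\Omega)} \lesssim |x-y|$. Second, a linearisation of the flow around $X_{t_j,\bar{X}_j}$ gives, conditionally on $\mathcal{F}_{t_{j+1}}$,
\[
\E\bigl[Y^{(k)}_{j+1} - Y^{(k)}_j \,\big|\, \mathcal{F}_{t_{j+1}}\bigr] \;=\; \E\bigl[\varepsilon_j\,\big|\,\mathcal{F}_{t_j}\bigr] \;+\; (\text{higher-order in }\varepsilon_j),
\]
so that the weak one-step error can be summed coherently. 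Taking $\lev\cdot\rev_{L^p}$ of the telescoping identity and splitting each summand into a ``centred'' and a ``mean'' part, the centred contributions sum via Minkowski/Burkholder to $\bigl(\sum_j h^{2\delta}\bigr)^{1/2} \lesssim h^{\delta-1/2}$, while the mean contributions sum to $k\,h^{\delta+1/2}\lesssim h^{\delta-1/2}$. A final Gronwall step in $k$ handles the slow drift of the $L^p$-flow stability constant.

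The polynomial prefactors $1+|\bar{X}_j|^{\alpha\vee\alpha'}$ are absorbed by invoking \autoref{th:main} with $V(\cdot)=|\cdot|^{p_0}\in\hat{\mathcal{V}}^{p_0}_{1/p_0}$: the monotonicity condition \eqref{mono} and the growth \eqref{growth3} both follow from \autoref{ass:convergence}, and provided the taming is chosen to satisfy \eqref{eq:b_coeff}, one obtains $\sup_{h,j}\E|\bar{X}_j|^{p_0}<\infty$; choosing $p_0$ much larger than $p(\alpha\vee\alpha')$ then renders all moment bounds uniform in $h$ via H\"older. The main obstacle I foresee is the bookkeeping of exponents: one must simultaneously ensure $p_0$ is large enough for H\"older to kill all polynomial factors (including those arising in the $L^p$-flow continuity constant after iterating Gronwall), while keeping the integrability hypothesis \eqref{eq:b_coeff} consistent with the monotonicity \eqref{mono} coming from \autoref{ass:convergence}(i) at level $p_0-1$; this is exactly the role of the phrase ``$p_0>p$ sufficiently large'' in the statement.
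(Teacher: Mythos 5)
You are reconstructing a proof the paper itself does not give: Theorem \ref{convergence} is imported wholesale from \cite{tretyakov2012fundamental} (a Milstein-type ``fundamental theorem'' adapted to locally Lipschitz coefficients), and the paper's only contribution here is the remark that \autoref{th:main} supplies the $p_0$-moment bounds that the cited theorem needs. Your overall strategy is indeed the strategy of that cited proof: telescoping through the exact flow, splitting the local error into (tamed scheme vs.\ standard Euler) plus (standard Euler vs.\ exact flow), $L^p$-stability of the flow in its initial datum from \autoref{ass:convergence}(i), and absorption of the polynomial prefactors via the uniform $p_0$-moment bound with $V(\cdot)=|\cdot|^{p_0}$. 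That part is sound and matches the intended route.

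However, there is a genuine gap at the decisive step, the passage from one-step to global error with only a half-order loss. The transported increments $Y^{(k)}_{j+1}-Y^{(k)}_j=X_{t_{j+1},\bar{X}_{j+1}}(t_k)-X_{t_{j+1},X_{t_j,\bar{X}_j}(t_{j+1})}(t_k)$ are all $\mathcal{F}_{t_k}$-measurable (they involve the Brownian path on $[t_{j+1},t_k]$ through the flow), so after centring they are \emph{not} martingale differences with respect to $(\mathcal{F}_{t_{j+1}})_j$, and the Minkowski/Burkholder summation $\bigl(\sum_j h^{2\delta}\bigr)^{1/2}$ you invoke does not apply as written; for the same reason the identity $\E\bigl[Y^{(k)}_{j+1}-Y^{(k)}_j\,\big|\,\mathcal{F}_{t_{j+1}}\bigr]=\E[\varepsilon_j\,|\,\mathcal{F}_{t_j}]+\cdots$ is false in general. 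The actual argument (Milstein's, extended by Tretyakov--Zhang) runs a recursion for the error moment: write the error at $t_{j+1}$ as (flow-transported previous error) $+$ (new local error $\varepsilon_j$), expand the square (or $2p$-th power for the $L^p$ statement), and treat the cross term by splitting the flow increment into an $\mathcal{F}_{t_j}$-measurable part, against which the weak one-step bound $h^{\delta+1/2}$ is used after conditioning, plus a remainder of size comparable to $|\varepsilon_j|$, handled by Cauchy--Schwarz together with the strong bound $h^{\delta}$; a discrete Gronwall argument then yields $O(h^{\delta-1/2})$. In the present non-globally Lipschitz setting every constant in these estimates is polynomial in the state, so the conditional flow-stability and the cross-term bounds must be paired with the uniform moment bounds — this is exactly where ``$p_0$ sufficiently large'' and \autoref{th:main} enter, and your closing paragraph correctly identifies that bookkeeping. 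Two smaller points: the $L^p$ conclusion for general $p$ requires the $2p$-moment recursion (the cited source states the mean-square case), and the Euler-vs-exact one-step errors under \autoref{ass:convergence} ($O(h)$ strong, $O(h^{3/2})$ or better weak) cap the usable range at $\delta\le1$, which is the intended regime.
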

Here \autoref{th:main} plays an essential role in controlling the highest ($p_0$) moments of $\{\bx_k\}$ needed for $L^p$ convergence, which depends on what specific taming method one adopts, and how one decomposes the global error. One can find out the $p_0$ with respect to the balanced schemes in \cite{hutzenthaler2012strong,tretyakov2012fundamental,Sabanis2014}.

\section{Proof of Proposition \ref{trun_conv}}\label{proof_trun_conv}
\begin{proof}
Since both drift and diffusion are Lipschitz in $t$, we may assume $b(t,x)=b(x),~\sigma(t,x)=\sigma(x),~\forall t,x$. Notice that using a more precise growth condition \eqref{growth3} rather than \autoref{as:poly}, we can estimate $|b|h^{1/2}$ and $\|\s\|h^{1/4}$ separately in \eqref{trun_bound} and need only choose $r<1/(2(\k-1))$, where $q\g=1$.

One only needs to check if $\delta=1$ in \autoref{convergence}. Indeed the weak one-step error has estimate, by Cauchy-Schwartz inequality and Chebyshev's inequality (denote $\D W:=W_{t+h}-W_t$),
\begin{align*}
\left|\ex\bx_{t,x}-\ex\tx_{t,x}\right|=&\left|\ex\Pi(x+b(x)h+\sigma(x)\Delta W)-\ex\left(x+b(x)h+\sigma(x)\Delta W\right)\right|\\
\leqslant&2\ex\left|x+b(x)h+\sigma(x)\Delta W\right|\one_{|x+b(x)h+\sigma(x)\Delta W|>h^{-r}}\\
\leqslant&K\left(\ex|x+b(x)h+\sigma(x)\Delta W|^{2+\frac{3}{r}}\right)^\frac{1}{2}h^\frac{3}{2}\\
\leqslant&K\left(|x|^{1+\frac{3}{2r}}+\left((1+|x|)h^\frac{1}{2}\right)^{1+\frac{3}{2r}}+\left((1+|x|)h^\frac{1}{4}\right)^{1+\frac{3}{2r}}\right)h^\frac{3}{2}\\
\leqslant&K\left(1+|x|^{1+\frac{3}{2r}}\right)h^\frac{3}{2},
\end{align*}
where we used \eqref{trun_bound} for $|x|\le h^{-r}$. Similarly,
\begin{align*}
\ex\left|\bx_{t,x}-\tx_{t,x}\right|^2=&\ex\left|\Pi(x+b(x)h+\sigma(x)\Delta W)-x-b(x)h-\sigma(x)\Delta W\right|^2\\
\leqslant&K\ex\left|x+b(x)h+\sigma(x)\Delta W\right|^2\one_{|x+b(x)h+\sigma(x)\Delta W|>h^{-r}}\\
\leqslant&K\left(\ex|x+b(x)h+\sigma(x)\Delta W|^{4+\frac{4}{r}}\right)^\frac{1}{2}h^2\\
\leqslant&K\left(|x|^{2+\frac{2}{r}}+\left((1+|x|)h^\frac{1}{2}\right)^{2+\frac{2}{r}}+\left((1+|x|)h^\frac{1}{4}\right)^{2+\frac{2}{r}}\right)h^2\\
\leqslant&K\left(1+|x|^{2+\frac{2}{r}}\right)h^2.
\end{align*}
This validates the $L^2$ convergence of \eqref{scm21}.
\end{proof}
It is worth mentioning that here about $4\k$ moments are needed for the scheme, which is almost the same number of moments required for the balanced scheme on one step. However, as shown in Lemma 3.1 in \cite{tretyakov2012fundamental}, there is a further increase in the necessary number of bounded moments, implying that one needs less $p_0$ in \autoref{ass:convergence} for the projected scheme \eqref{scm21} than the balanced scheme \eqref{scm11}.

\section{Proof of Lemma \ref{lem_pos}}\label{proof_of_positivity}
\begin{proof}
Consider $f(x)=x^-=\max(0,-x)$. Take a sequence of smooth functions $\phi_n(x)\in\mathcal{C}^2(\RR)$ s.t.
\begin{equation*}
\phi_n(x)\to f(x),~\phi_n'(x)\to-\one_{\{x<0\}}(x),~\phi_n''(x)\to0,
\end{equation*}
uniformly as $n\to\infty$. For example one can choose $\phi_n(x)\lesssim1/(n|x|^m)$, if we assume $|\s(t,x)|^2\le K|x|^m$ for some $K>0,~m\ge1$. Apply It\^{o}'s formula to $\phi_n(X_t)$ we get
\begin{align*}
\td\phi_n(X_t)=&\phi_n'(X_t)\td X_t+\frac{1}{2}\phi_n''(X_t)\td\lj X\rj_t\\
=&\phi_n'(X_t)(b(t,X_t)\td t+\sigma(t,X_t)\td W_t)+\frac{1}{2}\phi_n''(X_t)\sigma^2(t,X_t)\td t.
\end{align*}
Let $n\to\infty$ we get
\begin{equation}\label{ito_pos}
X_t^-=\int_0^t-\one_{\{X_s<0\}}(b(s,X_s)\td s+\sigma(s,X_s)\td W_s).
\end{equation}
From \eqref{4.3} one can show that $b(t,x)=b_1(t,x)+b_2(t,x)$, where $b_1(t,x)$ is monotonically decreasing in $x$, and $b_2(t,x)$ is Lipschitz. One can, e.g., choose $b_2(t,x)=Kx$ and hence
\begin{align*}
(x-y)(b_1(t,x)-b_1(t,y))=&(x-y)(b(t,x)-Kx-b(t,y)+Ky)\\
=&(x-y)(b(t,x)-b(x,y))-K|x-y|^2\leqslant0.
\end{align*}
Thus, by taking expectation on both sides of \eqref{ito_pos}, we get
\begin{align*}
\ex X_t^-=&\ex\int_0^t-\one_{\{X_s<0\}}b(s,X_s)\td s\\
=&\ex\int_0^t-\one_{\{X_s<0\}}\left(b_1(s,X_s)+b_2(s,X_s)\right)\td s\\
\leqslant&\ex\int_0^t-\one_{\{X_s<0\}}\left(b_1(s,0)+b_2(s,0)-K|X_s|\right)\td s\\
=&\ex\int_0^t\one_{\{X_s<0\}}\left(-b(x,0)+K|X_s|\right)\td s.
\end{align*}
Note that $b(s,0)\geqslant0$, thus
\begin{equation*}
\ex X_t^-\leqslant\int_0^tK\ex X_s^-\td s~\Rightarrow~\ex X_t^-=0,~\forall t,
\end{equation*}
by Gronwall's inequality, which furthermore implies that $X_t\geqslant0$ a.s.
\end{proof}
\end{appendices}

\newpage
\bibliographystyle{plain}
\bibliography{v_stab}

\end{document}